\documentclass[a4paper]{amsart}

\usepackage{amssymb, stmaryrd, ifthen, comment, mathtools, xspace}
\usepackage[alphabetic]{amsrefs}

\usepackage[margin=3.2cm]{geometry}

\DefineSimpleKey{bib}{primaryclass}{}
\DefineSimpleKey{bib}{archiveprefix}{}

\BibSpec{arXiv}{%
  +{}{\PrintAuthors}{author}
  +{,}{ \textit}{title}
  +{}{ \parenthesize}{date}
  +{,}{ arXiv }{eprint}
  +{,}{ primary class }{primaryclass}
}

\mathtoolsset{centercolon}

\usepackage[shortlabels]{enumitem}
\setenumerate[0]{label={\rm (\roman*)}, leftmargin=*}

\usepackage[T1]{fontenc}
\usepackage[english]{babel}

\usepackage{hyperref}\hypersetup{bookmarksopen=true}

\usepackage[textsize=footnotesize,textwidth=20ex,colorinlistoftodos]{todonotes}


\usepackage[capitalize]{cleveref}
\crefname{enumi}{}{}
\Crefname{enumi}{Item}{Items}
\crefname{equation}{}{}
\Crefname{equation}{Equation}{Equations}

\newcommand{\itemref}[2]{\cref{#1}\cref{#1:#2}}

\numberwithin{equation}{section}

\newtheorem{theorem}[equation]{Theorem}

\newtheorem{lemma}[equation]{Lemma}
\newtheorem{proposition}[equation]{Proposition}
\newtheorem{corollary}[equation]{Corollary}

\theoremstyle{definition}
\newtheorem{remark}[equation]{Remark}
\newtheorem*{remark*}{Remark}
\newtheorem{definition}[equation]{Definition}

\newtheorem{examples}[equation]{Examples}

\newtheorem*{construction*}{Construction}

\newcommand{\Z}{\mathbb{Z}} 

\newcommand{\F}{\mathbb{F}}

\newcommand{\cP}{\mathcal{P}}
\newcommand{\cL}{\mathcal{L}}

\newcommand{\B}{\mathcal{B}}
\newcommand{\I}{\mathcal{I}}
\newcommand{\X}{\mathcal{X}}

\newcommand{\id}{\mathrm{id}}
\newcommand{\dash}{\nobreakdash-\hspace{0pt}}

\newcommand{\geom}{\mathcal{G}}
\newcommand{\geomH}{\mathcal{H}}

\newcommand{\G}{\mathbb{G}} 

\newcommand{\subhalf}{{\leavevmode \raise.5ex\hbox{\the\scriptfont0 1}\kern-.13em /\kern-.07em\lower.25ex\hbox{\the\scriptfont0 2}}}

\DeclareMathOperator{\ad}{ad}
\DeclareMathOperator{\Ann}{Ann}
\DeclareMathOperator{\Aut}{Aut}

\DeclareMathOperator{\Char}{char}

\DeclareMathOperator{\im}{im}

\DeclareMathOperator{\Miy}{Miy}

\DeclareMathOperator{\Sym}{Sym}
\DeclareMathOperator{\GL}{GL}
\DeclareMathOperator{\SL}{SL}

\newcommand{\DATwoDrawing}[6]{
		\draw (0,0)
		-- (0.5,1) node[circle,fill=black,label=-3:{#3}]  {}
		-- ++(1,2) node[circle,fill=black,label=3:{#2}]  {}
		-- ++(1,2) node[circle,fill=black,label=right:{#1}]  {}
		-- ++(1,-2) node[circle,fill=black,label=3:{#6}]  {}
		-- ++(0.5,-1) node[circle,fill=black,label=-3:{#5}]  {}
		-- ++(1,-2);
		\draw (0,3)
		-- ++(7.5,0) node[circle,fill=black,label=above:{#4}]  {}
		-- ++(-7.5,-7.5/3.5);
}

\newcommand{\AThreeDrawing}[9]{
		\useasboundingbox	(-1.0cm,1.5cm) rectangle (5.0cm,-5.3cm);
		\draw (0,0) node (1) [label=left:{#1}] {}
			++(0:2cm) node (2) [label=above:{#2}] {}
			++(0:2cm) node (3) [label=right:{#3}] {}
			++(270:2cm) node (6) [label=right:{#6}] {}
			++(180:2cm) node (5) [label=above right:{#5}] {}
			++(180:2cm) node (4) [label=left:{#4}] {}
			++(270:2cm) node (7) [label=left:{#7}] {}
			++(0:2cm) node (8) [label=below:{#8}] {}
			++(0:2cm) node (9) [label=right:{#9}] {}
		;

		\draw (1) -- (2) -- (3);
		\draw (4) -- (5) -- (6);
		\draw (7) -- (8) -- (9);
		\draw (1) -- (4) -- (7);
		\draw (2) -- (5) -- (8);
		\draw (3) -- (6) -- (9);
		\draw (1) -- (5) -- (9);
		\draw (3) -- (5) -- (7);

		\draw (1) to (6);
		\draw[shift=(1)] (6) .. controls (332:7.5cm) and (298:7.5cm)	.. (8);
		\draw (8) to (1);

		\draw (3) to (4);
		\draw[shift=(3)] (4) .. controls (208:7.5cm) and (242:7.5cm)	.. (8);
		\draw (8) to (3);

		\draw (7) to (2);
		\draw[shift=(7)] (2) .. controls (62:7.5cm) and (28:7.5cm)	.. (6);
		\draw (6) to (7);

		\draw (9) to (2);
		\draw[shift=(9)] (2) .. controls (118:7.5cm) and (152:7.5cm)	.. (4);
		\draw (4) to (9);
}

\raggedbottom

\hyphenation{eigen-space Shpectorov}

\begin{document}

\title{Matsuo algebras in characteristic 2}

\author{Tom De Medts}
\address[Tom De Medts]{Ghent University \\
Department of Mathematics: Algebra and Geometry \\
\mbox{Krijgslaan} 281--S25 \\
9000 Gent \\
Belgium}
\email{tom.demedts@ugent.be}

\author{Mathias Stout}
\address[Mathias Stout]{KU Leuven \\
Department of Mathematics \\
Celestijnenlaan 200B bus 2400 \\
B-3001 Leuven \\
Belgium}
\email{mathias.stout@kuleuven.be}

\date{August 22, 2023}

\begin{abstract}
    We extend the theory of Matsuo algebras, which are certain non-associative algebras related to 3-transposition groups, to characteristic 2. Instead of idempotent elements associated to points in the corresponding Fischer space, our algebras are now generated by nilpotent elements associated to lines. For many 3-transposition groups, this still gives rise to a $\Z/2\Z$-graded fusion law, and we provide a complete classification of when this occurs.
    In one particular small case, arising from the 3-transposition group $\Sym(4)$, the fusion law is even stronger, and the resulting Miyamoto group is an algebraic group $\G_a^2 \rtimes \G_m$.
\end{abstract}

\keywords{Non-associative algebras, Matsuo algebras, axial algebras, decomposition algebras, $3$-transposition groups, Fischer spaces}
\makeatletter 
\@namedef{subjclassname@2020}{\textup{2020} Mathematics Subject Classification}
\makeatother
\subjclass[2020]{20B25, 20F29, 17A99, 51E30}

\maketitle

\section{Introduction}

\emph{Axial algebras} have been introduced by Jon Hall, Felix Rehren and Sergey Shpectorov in \cite{HRS15}. They form a class of non-associative algebras generated by \emph{idempotents}, such that each idempotent gives rise to a decomposition of the algebra respecting a certain \emph{fusion law}.

One of the early examples that has been studied in more detail, are the so-called \emph{Matsuo algebras}, related to $3$-transposition groups. More precisely, if $(G, D)$ is a $3$-transposition group (where $G$ is the group and $D$ is the distinguished generating set of involutions), then for each commutative field $k$ with $\Char(k)\neq 2$ and each $\eta \in k \setminus \{ 0,1 \}$, there is a $k$-algebra $A$ \emph{spanned} by idempotents corresponding to the elements of $D$, with a very precise multiplication table (see \cref{def:matsuo-not-2} below). Each of these generating idempotents $e$ then gives rise to a decomposition of $A$ into eigenspaces for the left multiplication by $e$, and the multiplication of these eigenspaces satisfies the \emph{Jordan fusion law}, making it very similar in structure as the Peirce decomposition for Jordan algebras.
(We refer to \cite{DMR17} for a deeper connection between Matsuo algebras and Jordan algebras.)

Extending the theory to characteristic $2$ is not obvious. First, the decomposition into eigenspaces for an idempotent can never have a $\Z/2\Z$-graded fusion law, as we observe in \cref{pr:axial2} below. Therefore, it seems more natural to use a spanning set of \emph{nilpotent} elements rather than idempotents. (In this case, the role of the parameter $\eta$ becomes irrelevant.) Even so, just using nilpotents associated to the elements of $D$ gives rise to highly uninteresting decompositions, because the multiplication by such a nilpotent turns out to be a nilpotent operator (and hence has $0$ as its only eigenvalue).

We solve this by using nilpotent elements associated to \emph{lines} in the Fischer space associated to the $3$-transposition group, or put differently, to triples of involutions contained in a common $\Sym(3)$-subgroup of $G$. Somewhat surprisingly, this does indeed give rise to interesting decompositions in many cases.

Perhaps the most interesting case is the smallest non-trivial case arising from the $3$\dash transposition group $\Sym(4)$ with as generating set $D$ the set of all $6$ transpositions. In this case, we have only $4$ lines, but the decompositions satisfy a fusion law stronger than a $\Z/2\Z$-grading and allows for the action of a multiplicative torus $\G_m$ on each decomposition. Together, these four tori generate a group of the form $\G_a^2 \rtimes \G_m$; see \cref{th:CQ-Miy}. We also compute the full automorphism group of this algebra in \cref{pr:DAG22-Aut}.

For Fischer spaces of \emph{symplectic type}, we get (ordinary) $\Z/2\Z$-graded decompositions, as we show in \cref{th:symplectic}.
%
If the Fischer space is not of symplectic type, then additional problems arise. In the smallest case, which is the affine plane of order $3$, associated to the $3$\dash transposition group $3^2 : 2$, we have to pass to the \emph{generalized eigenspaces} (rather than the eigenspaces), which does not fit with the original definition of axial algebras from \cite{HRS15} but still complies with the more general framework of \emph{decomposition algebras} developed in \cite{DMPSVC20}. In this specific case, the resulting decomposition algebra is again $\Z/2\Z$-graded; see \cref{th:aff-decalg}.

However, as soon as the Fischer space is not of symplectic type and has rank~$\geq 4$, then things break down and the resulting nilpotent Matsuo algebra is no longer $\Z/2\Z$-graded, as we show in \cref{th:main} below.

\medskip

When we had almost finished writing this paper, we became aware of the surprising fact that in \cite{CHPS12}, the authors study exactly the same algebras as we do, but only over the field $\F_2$. They had different goals than we do, so our results are almost completely disjoint from each other.

\section{Axial decomposition algebras}

We first recall the definition of (commutative) axial decomposition algebras from \cite{DMPSVC20}.
All algebras will be \emph{commutative}, but \emph{not necessarily associative nor unital} algebras over a commutative base ring $R$.

\begin{definition}\label{def:fus}
    \begin{enumerate}
        \item 
            A \emph{fusion law} is a pair $(X, *)$ where $X$ is a (usually finite) set and $*$ is a map from $X \times X$ to $2^X$, where $2^X$ denotes the power set of $X$.
            A fusion law $(X, *)$ is called \emph{symmetric} if $x * y = y * x$ for all $x,y \in X$.
            We call $e \in X$ a \emph{unit} if $e * x \subseteq \{ x \}$ and $x * e \subseteq \{ x \}$ for all $x \in X$.
        \item\label{def:fus:jordan}
        	The \emph{Jordan fusion law} is the symmetric fusion law $(X, *)$ with $X = \{ e, z, h \}$ and with $*$ given by the following table:
        	\begin{center}
        		\renewcommand{\arraystretch}{1.2}
        		\setlength{\tabcolsep}{0.75em}
        		\begin{tabular}[b]{c|ccc}
        			$*$	& $e$ & $z$ & $h$ \\
        			\hline
        			$e$ & $\{ e \}$ & $\emptyset$ & $\{ h \}$ \\
        			$z$ & $\emptyset$ & $\{ z \}$ & $\{ h \}$ \\
        			$h$ & $\{ h \}$ & $\{ h \}$ & $\{ e,z \}$
        		\end{tabular}
        	\end{center}
    \end{enumerate}
\end{definition}

\begin{definition}
    Let $\Phi = (X, *)$ be a fusion law.
    \begin{enumerate}[(i)]
        \item
          A \emph{$\Phi$-decomposition} of an $R$-algebra $A$ is a direct sum decomposition $A = \bigoplus_{x \in X} A_x$ (as $R$-modules) such that $A_x A_y \subseteq A_{x * y}$ for all $x,y \in X$, where $A_{Y} := \bigoplus_{y \in Y} A_y$ for all $Y \subseteq X$.
        \item
            A \emph{$\Phi$-decomposition algebra} is a triple $(A, \I, \Omega)$ where $A$ is an $R$-algebra, $\I$ is an index set and $\Omega$ is a tuple of $\Phi$-decompositions of $A$ indexed by $\I$.
            We will usually write the corresponding decompositions as $A = \bigoplus_{x \in X} A_x^i$, so
            \[ \Omega = \bigl( ( A_x^i )_{x \in X} \mid i \in \I \bigr) . \]
    \end{enumerate}
\end{definition}

\begin{definition}
    Let $\Phi = (X, *)$ be a fusion law with a distinguished unit $e \in X$.
    For each $x \in X$, let $\lambda_x \in R$.
    A $\Phi$-decomposition algebra $(A, \I, \Omega)$ will be called \emph{axial} (with \emph{parameters} $\lambda_x$)
    if for each $i \in \I$, there is some non-zero $a_i \in A_e^i$ (called an \emph{axis}) such that:
    \begin{equation}\label{eq:axial}
        a_i \cdot b = \lambda_x b \quad \text{for all } x \in X \text{ and for all } b \in A_x^i .
    \end{equation}
    In particular, $A$ decomposes into eigenspaces for the left multiplication operator
    \[ \ad_{a_i} \colon A \to A \colon b \mapsto a_i \cdot b . \]
    (Notice, however, that some of the parameters $\lambda_x$ might coincide, so the summands $A_x^i$ are not necessarily eigenspaces.)
\end{definition}

The most interesting examples of decomposition algebras arise from fusion laws with a \emph{grading}.

\begin{definition}\label{def:grading}
    \begin{enumerate}
        \item 
        	Let $\Gamma$ be a group.
        	Then the map
        	\[ * \colon \Gamma \times \Gamma \to 2^\Gamma \colon (g, h) \mapsto \{ g h \} \]
        	makes $(\Gamma, *)$ into a fusion law, which we call a \emph{group fusion law}.
        	The identity element of $\Gamma$ is the unique unit of the fusion law $(\Gamma, *)$.
        \item
            Let $(X, *)$ and $(Y, *)$ be two fusion laws.
            A \emph{morphism} from $(X, *)$ to $(Y, *)$ is a map $\xi \colon X \to Y$ such that
            \[ \xi(x_1 * x_2) \subseteq \xi(x_1) * \xi(x_2) \]
            for all $x_1, x_2 \in X$,
            where we have denoted the obvious extension of $\xi$ to a map $2^X \to 2^Y$ also by $\xi$.
        \item
            Let $(X, *)$ be a fusion law and let $(\Gamma, *)$ be a group fusion law.
            A \emph{$\Gamma$-grading} of $(X, *)$ is a morphism $\xi \colon (X, *) \to (\Gamma, *)$.
            In this paper, the grading group $\Gamma$ will always be abelian.
    \end{enumerate}
\end{definition}

The Jordan fusion law from \itemref{def:fus}{jordan} is $\Z/2\Z$-graded. Indeed, 
the map $\xi \colon X \to \Z/2\Z$ mapping $e$ and $z$ to $0$ and $h$ to $1$ 
is a fusion law morphism.
If the fusion law is graded by a non-trivial group, then we can construct an interesting subgroup of the automorphism group $\Aut(A)$ of the algebra $A$.

\begin{definition}\label{def:miy}
    \begin{enumerate}
        \item 
            Let $R^\times$ be the group of invertible elements of the base ring~$R$.
            An \textit{$R$-character} of $\Gamma$ is a group homomorphism $\chi \colon \Gamma \to R^\times$.
            The \textit{$R$-character group} of $\Gamma$ is the group $\X$ consisting of all $R$-characters of $\Gamma$,
            with group operation induced by multiplication in $R^\times$.
        \item
            Let $(A, \I, \Omega)$ be a $\Gamma$-decomposition algebra.
            Let $\chi \in \X$.
            For each decomposition $(A^i_g)_{g \in \Gamma} \in \Omega$, we define a linear map
            \[ \tau_{i,\chi} \colon A \to A \colon a \mapsto \chi(g) a \quad \text{for all } a \in A^i_g ; \]
            we call this a \emph{Miyamoto map}.
            It follows immediately from the definitions that each $\tau_{i,\chi}$ is an automorphism of the $R$-algebra $A$.
        \item
            We define the \emph{(full) Miyamoto group} of $(A, \I, \Omega)$ by
            \[ \Miy(A, \I, \Omega) := \langle \tau_{i,\chi} \mid i \in \I, \chi \in \X \rangle \leq \Aut(A) . \]        
    \end{enumerate}
\end{definition}

\section{Matsuo algebras}    

One of the first classes of examples of axial decomposition algebras with a Jordan fusion law that has been systematically studied, is the class of \emph{Matsuo algebras} arising from $3$-transposition groups.
We recall their definition.

\begin{definition}\label{def:matsuo-not-2}
    \begin{enumerate}
        \item
            A \emph{3-transposition group} is a pair $(G,D)$ where $G$ is a group and $D \subset G$ is a conjugacy class of involutions in $G$ such that $G = \langle D \rangle$ and such that for all $d,e \in D$, the order $o(de)$ of the element $de \in G$ is at most $3$.
        \item
            Assume that $2$ is invertible in $R$, and let $\eta \in R$ be arbitrary. Let $A$ be a free $R$-module with basis $D$.
            We make $A$ into an $R$-algebra by declaring
            \[ d \cdot e := \begin{cases}
                d & \text{if } d = e, \\
                0 & \text{if } o(de) = 2, \\
                \tfrac{\eta}{2} (d + e - f) & \text{if } o(de) = 3, \text{ where $f = ded = ede$}.
            \end{cases}
            \]
            This algebra will be denoted by $M_R(G, D, \eta)$ and will be called the \emph{Matsuo algebra} (over $R$) of the $3$-transposition group $(G,D)$ with parameter $\eta$.
    \end{enumerate}
\end{definition}

The 3-transposition groups have a geometric counterpart known as \emph{Fischer spaces}.

\begin{definition}
    \begin{enumerate}
        \item 
        	A {\em partial linear space} $\geom$ is a pair $\geom = (\cP, \cL)$, where $\cP$ is a set of {\em points} and $\cL \subseteq 2^\cP$ is a set of {\em lines} such that any $\ell \in \cL$ has size at least~$2$ and such that any two distinct lines intersect in at most one point.
        	Two distinct points $x,y \in \cP$ are called \emph{collinear} if they are contained in a (necessarily unique) common line, and we denote this by $x \sim y$.
        \item
        	A partial linear space is called a {\em partial triple system} if every line has exactly $3$ points.
            In this case, for any two collinear points $x,y \in \cP$, the unique third point on the line through $x$ and $y$ will be denoted by $x \wedge y$.
        \item
            A non-empty subset $S$ of $\cP$ is called a {\em subspace} of $\geom$ if it is closed under the operation $\wedge$, and if $T$ is any subset of $\cP$, then we write $\langle T \rangle$ for the {\em subspace generated by $T$}, i.e., the smallest subspace of $\geom$ containing $T$.
        \item
            Assume now that $(G,D)$ is a $3$-transposition group. Then we can associate a partial triple system to $(G,D)$ with point set $D$, where a triple $\{ d, e, f \} \subseteq D$ forms a line if and only if $d,e,f$ are the three involutions in a $\Sym(3)$-subgroup of $G$. In particular, two distinct points $d$ and $e$ are collinear if and only if $o(de) = 3$ in $G$.
            Such a partial triple system is called a \emph{Fischer space}.
    \end{enumerate}
\end{definition}
\begin{remark}\label{rem:connected}
    Sometimes, the defining set $D$ of a $3$-transposition group $(G,D)$ is allowed to be a union of several conjugacy classes, but our assumption is not a severe one and is common. It has as a consequence that the resulting Fischer space is \emph{connected}.
    See, e.g., \cite[Theorem (3.1)]{CH95}.
\end{remark}
\begin{definition}
    The \emph{rank} of a Fischer space $\geom$ is the maximal length $r$ of a chain of connected subspaces
    \[ \geom_0 < \geom_1 < \dots < \geom_r = \geom . \]
    In particular, a Fischer space of rank $1$ is a point and a Fischer space of rank $2$ is a line (with three points).
\end{definition}

The following two examples play an important role, as they are the only two Fischer spaces of rank $3$; see \cref{th:rank3}.
\begin{examples}\label{ex:fischer}
    \begin{enumerate}
        \item\label{ex:fischer:DA2}
            The $3$-transposition group $G = \Sym(4)$ with generating set of involutions $D$ consisting of the six transpositions in $\Sym(4)$ has as associated Fischer space the dual affine plane of order $2$, also known as the \emph{complete quadrilateral}:
            \begin{center}
            \begin{tikzpicture}[scale=.6, every node/.style={scale=.8}]
                \DATwoDrawing{$(12)$}{$(13)$}{$(23)$}{$(34)$}{$(24)$}{$(14)$}
            \end{tikzpicture}
            \end{center}
        \item\label{ex:fischer:A3}
            The $3$-transposition group $G = 3^2 \colon 2$ (where the action of $C_2$ on $C_3 \times C_3$ is given by inversion) with generating set $D$ consisting of all $9$ involutions in $G$, has as associated Fischer space the affine plane of order $3$. We will refer to this example of a Fischer space simply as \emph{the affine plane}.
            \begin{center}
        	\begin{tikzpicture}[scale=.65,every node/.style={scale=.8,fill=black,circle}]
        		\AThreeDrawing{$1$}{$2$}{$3$}{$4$}{$5$}{$6$}{$7$}{$8$}{$9$}
        	\end{tikzpicture}
            \end{center}
    \end{enumerate}
\end{examples}

\begin{theorem}\label{th:rank3}
    Let $\geom$ be a connected Fischer space of rank $3$.
    Then $\geom$ is isomorphic to one of the two examples from \cref{ex:fischer}.
    
    In particular, if $\geom$ is a Fischer space of arbitrary rank, then the subspace generated by two distinct intersecting lines of $\geom$ is isomorphic to one of these two examples.
\end{theorem}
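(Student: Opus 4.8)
The plan is to exploit the \emph{transposition automorphisms} of $\geom$. For a point $p$, viewed as an involution in $D$, conjugation $x \mapsto pxp$ induces a map $\sigma_p$ on $\geom$ that is an automorphism of the partial triple system: it fixes $p$ and every point non-collinear with $p$ (these are the involutions equal to or commuting with $p$), while it sends every point $x \sim p$ to the third point $p \wedge x$ of the line through $p$ and $x$ (the third involution of the $\Sym(3)$ generated by $p$ and $x$). The workhorse I would prove first is the following local lemma: \emph{for any line $\ell$ and any point $p \notin \ell$, the point $p$ is collinear with exactly $0$, $2$, or $3$ of the points of $\ell$ --- never exactly one.} Indeed, the points of $\ell$ fixed by $\sigma_p$ are precisely those non-collinear with $p$, and such a fixed point lies in $\ell \cap \sigma_p(\ell)$. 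If $\sigma_p$ stabilizes $\ell$ setwise, then, being an involution that cannot swap two points of a line (for $\sigma_p(a)=b$ with $a,b\in\ell$ would force the line through $p$ and $a$ to equal $\ell$, hence $p\in\ell$), it fixes $\ell$ pointwise, so $p$ is collinear with $0$ points of $\ell$. Otherwise $\sigma_p(\ell)\neq\ell$ is a distinct line meeting $\ell$ in at most one point, so at most one point of $\ell$ is fixed and therefore at least $2$ are collinear with $p$. This excludes the ``exactly one'' configuration.

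Next I would reduce to a single generated subspace. Using connectivity, choose a line $\ell$ and a point $p \notin \ell$ that is collinear with some point of $\ell$; then $\langle \ell, p \rangle$ is a connected subspace properly containing $\ell$. Since a point, $\ell$, $\langle \ell, p\rangle$ and $\geom$ would form a chain of length $4$, the rank\dash$3$ hypothesis forces $\langle \ell, p \rangle = \geom$. By the local lemma, $p$ is collinear with exactly $2$ or exactly $3$ points of $\ell$ (not $0$, since it is collinear with at least one), which splits the argument into two cases.

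The heart of the proof is then the reconstruction in each case. In the case where $p$ is collinear with exactly two points $a,b$ of $\ell=\{a,b,c\}$, I would introduce $a' = p \wedge a$ and $b' = p \wedge b$ and show, by repeatedly applying the local lemma to forbid spurious collinearities, that closing $\{a,b,c,p\}$ under $\wedge$ terminates at exactly the six points $\{a,b,c,p,a',b'\}$ carrying four lines, i.e.\ the complete quadrilateral of \cref{ex:fischer:DA2}; matching this explicitly to the six transpositions of $\Sym(4)$ and its four point\dash stabilizer $\Sym(3)$'s pins down the isomorphism. In the case where $p$ is collinear with all three points of $\ell$, the same closure analysis (starting from $a'=p\wedge a$, $b'=p\wedge b$, $c'=p\wedge c$ and forcing the remaining two points and all twelve lines) yields the affine plane of \cref{ex:fischer:A3}, identified with the $9$ involutions of $3^2 \colon 2$. \textbf{The main obstacle} is exactly this closure argument: I expect the delicate part to be verifying that the generation under $\wedge$ does not produce further points and that every required line is present, and this is where the local lemma must be invoked systematically; the case of the affine plane, with its $9$ points and $12$ lines, is the more laborious of the two.

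Finally, I would deduce the ``in particular'' clause from the same core. Given two distinct lines $\ell_1,\ell_2$ meeting in a point $q$, pick $p \in \ell_2 \setminus \ell_1$; then $p$ is collinear with $q \in \ell_1$, so the reconstruction above shows that $\langle \ell_1, p \rangle$ is isomorphic to one of the two examples (and in particular has rank $3$). As a subspace it is closed under $\wedge$, hence contains the entire line through the two collinear points $q$ and $p$, which is $\ell_2$. Therefore $\langle \ell_1, \ell_2 \rangle = \langle \ell_1, p \rangle$ is one of the two examples, completing the proof.
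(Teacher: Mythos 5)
Your architecture is sound and, properly completed, would give a correct self-contained proof: the maps $\sigma_p$ are indeed automorphisms of the Fischer space, your derivation of the $0$\dash $2$\dash $3$ lemma from them is correct, the rank\dash $3$ reduction to $\geom = \langle \ell, p\rangle$ works, and the deduction of the ``in particular'' clause from the two-case reconstruction is fine. (Note that the paper offers no proof to compare against: it simply cites \cite[Section 3.2]{CH95} and remarks that the statement is often taken as the \emph{definition} of a Fischer space, as in \cite{CH92}; so a complete argument here has to stand on its own.)

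The genuine gap sits exactly where you flag ``the main obstacle'': the closure analysis is not carried out, and the tool you propose for it --- ``repeatedly applying the local lemma to forbid spurious collinearities'' --- provably cannot do the job. The Fano plane is a connected partial triple system, generated by any one of its lines together with an outside point, in which every point off a line is collinear with all three of its points; it therefore satisfies the conclusion of your local lemma vacuously, yet it is neither of the two spaces of \cref{ex:fischer}. Concretely, in your second case nothing in the local lemma prevents the closure from being the seven-point configuration $\{a,b,c,p,a',b',c'\}$ with the three additional lines $\{a,b',c'\}$, $\{b,a',c'\}$, $\{c,a',b'\}$ (a Fano plane) rather than the nine-point affine plane. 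Likewise, in your first case the lemma cannot decide whether $b \sim a'$: both $p$ and $a$ on the line $\{p,a,a'\}$ are already collinear with $b$, so ``exactly two'' and ``three'' are both admissible outcomes. The repair is to use the full strength of the tool you introduce in your first paragraph and then set aside: each $\sigma_q$, for $q$ a point of the configuration, is an \emph{automorphism}, so both collinearity and $\wedge$ are $\sigma_q$-equivariant. In case one, applying $\sigma_a$ (which sends $b \mapsto c$ and $a' \mapsto p$) shows that $b \sim a'$ would force $c \sim p$, a contradiction, and then $\sigma_p$ identifies $a' \wedge b'$ with $\sigma_p(a \wedge b)$, and so on, closing the six-point quadrilateral. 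In case two, applying $\sigma_p$ to the line $\{a,b,c\}$ forces $\{a',b',c'\}$ to be a line, which kills the Fano configuration outright, and the remaining meets (such as $a \wedge b'$, which $\sigma_b$-equivariance equates with other meets) are pinned down step by step, producing exactly the nine points and twelve lines of the affine plane. With the closure arguments rewritten around $\sigma$-equivariance rather than around the local lemma, your plan goes through; as stated, its central step is both missing and resting on a tool too weak to supply it.
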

\begin{proof}
    See, e.g., \cite[Section 3.2]{CH95}.
    In fact, this property is often used as an alternative equivalent \emph{definition} of Fischer spaces, as in \cite[p.\@~343]{CH92}.
\end{proof}
\begin{definition}\label{def:symplectic}
    A Fischer space $\geom$ is said to be of \emph{symplectic type} if all subspaces generated by two distinct intersecting lines are complete quadrilaterals.
\end{definition}

It is often useful to go directly from the Fischer space to the Matsuo algebra.
\begin{definition}
    Let $\geom = ( \cP, \cL )$ be a Fischer space. Then we define the Matsuo algebra $M_R(\geom, \eta)$ as the $R$-algebra with underlying free $R$-module with basis $\cP$, with multiplication given by
        \[ x \cdot y := \begin{cases}
            x & \text{if } x = y, \\
            0 & \text{if } x \neq y \text{ and } x \not\sim y, \\
            \tfrac{\eta}{2} (x + y - x \wedge y) & \text{if } x \sim y.
        \end{cases}
        \]
\end{definition}

For the rest of the paper, this is the point of view we will adopt, so all our algebras will be defined starting from a Fischer space rather than from the underlying $3$\dash transposition group.

Matsuo algebras are examples of axial decomposition algebras, with all elements of the basis $\cP$ as axes.
More precisely, for each $x \in \cP$, there is a corresponding decomposition of $A = M_R(\geom, \eta)$ into three parts, with parameters $0$, $1$ and $\eta$. When $\eta \not\in \{ 0,1 \}$, these summands are precisely equal to the eigenspaces of the multiplication operator $\ad_x$.

\section{Axial algebras in characteristic 2}

In the original definition of axial algebras as in \cite{HRS15}, the axes were always assumed to be \emph{idempotent} elements, and this is also how the Matsuo algebras had originally been studied.
In terms of the defining parameters in \cref{eq:axial}, this corresponds to the assumption that $\lambda_e = 1$.
Often, however, this is an unnecessary and sometimes unwanted restriction; in fact, if $2 = 0$ in $R$, this essentially excludes interesting decomposition algebras with a $\Z/2\Z$-grading, as the following observation shows.
Recall that all algebras are assumed to be commutative.
\begin{proposition}\label{pr:axial2}
    Assume that $2 = 0$ in $R$ and let $A$ be an axial decomposition algebra over $R$ with respect to a $\Z/2\Z$-graded fusion law $(X, *)$ such that all axes are idempotents.
    Then all axes are contained in the \emph{zero-core}, i.e., the intersection of the $0$-graded part of all of the decompositions.
    
    In particular, the algebra $A$ is not generated by its axes unless all its $\Z/2\Z$\dash decompositions are trivial.
\end{proposition}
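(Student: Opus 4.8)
The plan is to prove the slightly cleaner statement that \emph{every} idempotent of $A$ lies in the zero-core, from which the proposition follows at once since the axes are assumed idempotent. Concretely, I would fix an axis $a_i$ and an arbitrary index $j \in \I$, and show that $a_i$ lies in the $0$-graded part $A_0^j := \bigoplus_{\xi(x) = 0} A_x^j$ of the $j$-th decomposition; taking the intersection over all $j$ then places $a_i$ in the zero-core. The entire argument rests on one observation: when $2 = 0$ in $R$, squaring an element of a $\Z/2\Z$-graded algebra always lands in the $0$-graded part. Idempotency then pins each axis to such a square.

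To carry this out, I would fix $j \in \I$ and split $a_i = u_0 + u_1$ along the grading, with $u_0 \in A_0^j$ and $u_1 \in A_1^j$. Using commutativity of $A$ and expanding,
\[ a_i^2 = u_0^2 + 2\,u_0 u_1 + u_1^2 = u_0^2 + u_1^2, \]
since $2 = 0$ in $R$ annihilates the mixed term. The defining property of the grading morphism $\xi$ gives $A_x^j A_y^j \subseteq A_{\xi(x)+\xi(y)}^j$ for the induced coarse $\Z/2\Z$-grading of the algebra, so $u_0^2 \in A_0^j$ (as $0 + 0 = 0$) and, crucially, $u_1^2 \in A_0^j$ (as $1 + 1 = 0$ in $\Z/2\Z$). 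Hence $a_i^2 \in A_0^j$, and since $a_i$ is idempotent, $a_i = a_i^2 \in A_0^j$. As $j$ was arbitrary, $a_i$ belongs to the intersection of all the $0$-graded parts, i.e.\ to the zero-core.

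For the final assertion I would note that each $A_0^j$ is a subalgebra of $A$, because the grading yields $A_0^j A_0^j \subseteq A_0^j$. The zero-core, being an intersection of subalgebras, is therefore itself a subalgebra, and by the first part it contains all the axes. If $A$ is generated by its axes, then $A$ coincides with this subalgebra, so $A = A_0^j$ and hence $A_1^j = 0$ for every $j$; that is, all $\Z/2\Z$-decompositions are trivial.

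I do not anticipate a genuine obstacle, as the argument is short; the only points requiring care are bookkeeping. One must be sure the mixed term is counted with a factor $2$ (which uses commutativity, $u_0 u_1 = u_1 u_0$) so that it vanishes, and one must invoke the grading morphism precisely to locate the surviving squares in $A_0^j$. Conceptually this computation is exactly \emph{why} idempotent axes are ill-suited to characteristic $2$: idempotency identifies an axis with its own square, and in characteristic $2$ that square cannot detect the $1$-graded part at all. This is what motivates replacing idempotents by nilpotent generators in the remainder of the paper.
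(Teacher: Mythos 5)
Your proof is correct and is essentially the paper's own argument: decompose an axis along a $\Z/2\Z$-graded decomposition, square it, and use $2=0$ together with $A_1^j \cdot A_1^j \subseteq A_0^j$ to conclude the axis equals its square and lies in $A_0^j$. The only differences are cosmetic: you phrase it for an arbitrary idempotent and an arbitrary decomposition index $j$, and you spell out the final ``In particular'' assertion, which the paper leaves implicit.
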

\begin{proof}
    Let $a,b \in A$ be two axes, and decompose $b$ with respect to the $\Z/2\Z$\dash decomposition corresponding to $a$, as $b = b_0 + b_1$, with $b_0 \in A_0^a$ and $b_1 \in A_1^a$.
    Then, since $2 = 0$, we have
    \[ b = b^2 = (b_0 + b_1)^2 = b_0^2 + b_1^2 \in A_0^a \]
    because both $A_0^a \cdot A_0^a \subseteq A_0^a$ and $A_1^a \cdot A_1^a \subseteq A_0^a$.
\end{proof}

Because of \cref{pr:axial2}, we will, instead, assume that our axes are \emph{nilpotent} elements. We point out that this also makes sense for the deeper reason that the origin of the theory of axial algebras (and Matsuo algebras in particular) is found in the theory of VOAs, where an important role is played by elements $e$ for which $e^2 = 2e$ (rather than $e^2 = e$). This seems like an innocent rescaling but further illustrates that over arbitrary rings, the assumption $e^2 = 2e$ is more natural than $e^2 = e$ and naturally leads to nilpotent elements in the case that $2 = 0$.

\section{Nilpotent Matsuo algebras}

Assume, from now on, that $2 = 0$ in $R$.
In view of the previous section, the following definition is natural.
\begin{definition}
    Let $\geom = ( \cP, \cL )$ be a Fischer space. Assume that $2 = 0$ in~$R$ and let $\eta \in R$ with $\eta \neq 0$.
    Then we define the \emph{nilpotent Matsuo algebra} $M_R(\geom, \eta)$ as the $R$-algebra with underlying free $R$-module with basis $\cP$, with multiplication given by
        \[ x \cdot y := \begin{cases}
            0 & \text{if } x = y \text{ or } x \not\sim y, \\
            \eta (x + y + x \wedge y) & \text{if } x \sim y.
        \end{cases}
        \]
\end{definition}
\begin{remark}\label{rem:indep}
    \begin{enumerate}
        \item\label{rem:indep:1}
            For each line $\ell \in \cL$, the product of two distinct points on $\ell$ is independent of the choice of these points; it is always equal to $\eta$ times the sum of these three points.
        \item
            The role of the parameter $\eta$ is negligible. In fact, if $R$ is a field, then we can simply replace each point $x$ by $\eta^{-1} x$ and get the same multiplication on $A$ with $\eta$ replaced by $1$.
    \end{enumerate}
\end{remark}


From now on, we keep the assumptions that $\geom = ( \cP, \cL )$ is a Fischer space, that $2 = 0$ in~$R$ and that $A = M_R(\geom, 1)$ is the corresponding nilpotent Matsuo algebra (so $\eta = 1$).

A natural attempt now is to construct decompositions corresponding to the elements of the basis $\cP$. The algebra $A$ no longer decomposes into eigenspaces for $\ad_x$, but even passing to generalized eigenspaces does not work, because of the following observation.

\begin{proposition}
    For each $x \in \cP$, the operator $\ad_x$ is nilpotent. More precisely, we have $\ad_x^2 = 0$.
\end{proposition}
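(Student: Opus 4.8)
The plan is to verify the claim by a direct computation on basis elements, exploiting the fact that $\ad_x$ is determined by its action on the basis $\cP$ and that $\ad_x^2$ is linear, so it suffices to show $\ad_x^2(y) = 0$ for every $y \in \cP$. The computation splits according to the relative position of $x$ and $y$ in the Fischer space. If $y = x$ or $y \not\sim x$, then $\ad_x(y) = 0$ already, so $\ad_x^2(y) = 0$ trivially. The only interesting case is $y \sim x$, where $\ad_x(y) = x + y + z$ with $z := x \wedge y$ the third point on the line through $x$ and $y$.

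Next I would compute $\ad_x^2(y) = \ad_x(x + y + z) = \ad_x(x) + \ad_x(y) + \ad_x(z)$. Since $\ad_x(x) = 0$, and since $z = x \wedge y$ is collinear with $x$ on the \emph{same} line as $y$, \cref{rem:indep}\cref{rem:indep:1} tells us that $\ad_x(y)$ and $\ad_x(z)$ are both equal to $x + y + z$ (the product of any two distinct points on a line is $x + y + z$, independent of which two we pick). Therefore $\ad_x^2(y) = 0 + (x + y + z) + (x + y + z)$, which vanishes because $2 = 0$ in $R$. This is exactly where the characteristic\dash$2$ hypothesis enters in an essential way.

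I do not anticipate a genuine obstacle here; the statement is a short verification rather than a deep result. The one point requiring care is the second case above: I must confirm that $x \wedge y$ lies on the line $\{x, y, x \wedge y\}$ and is therefore collinear with $x$ via that \emph{same} line, so that $\ad_x(x \wedge y)$ again produces the sum $x + y + x \wedge y$ rather than some product on a different line. This is immediate from the definition of $\wedge$ and the fact that two distinct lines meet in at most one point, so no subtlety arises. The result then follows, and in particular the nilpotence of $\ad_x$ is witnessed already at the second power, explaining why passing to generalized eigenspaces for $\ad_x$ yields nothing (the only generalized eigenvalue is $0$).
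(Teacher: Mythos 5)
Your proposal is correct and matches the paper's own proof essentially step for step: both reduce to checking $\ad_x^2(y)=0$ on basis elements $y \in \cP$, dispose of the cases $y = x$ and $y \not\sim x$ trivially, and in the collinear case expand $x\cdot(x+y+x\wedge y)$ and cancel $xy$ against $x(x\wedge y)$ using \itemref{rem:indep}{1} together with $2=0$ in $R$. No gaps; the point you flag about $x\wedge y$ lying on the same line is exactly the content of that remark.
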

\begin{proof}
    It suffices to show that $x \cdot xy = 0$ for each $y \in \cP$.
    If $x = y$ or $x \not\sim y$, this is obvious, so assume that $x \sim y$.
    Then $x \cdot xy = x(x + y + x \wedge y) = xy + x(x \wedge y)$.
    Since $xy = x(x \wedge y)$ by \itemref{rem:indep}{1}, the result follows.
\end{proof}

However, not only the points $x \in \cP$ are nilpotent, but in fact, every single element in $A$ has square $0$. It turns out that the \emph{line nilpotents} are a better candidate to obtain decompositions of the algebra $A$.

\begin{definition} \label{def:line-nilpotent}
    For each line $\ell \in \cL$, we let $s_\ell \in A$ be the sum of the three points on~$\ell$, and we call this a \emph{line nilpotent} of $A$.
    In fact, we will simply write $\ell$ for $s_\ell$; this will not cause confusion.
\end{definition}

We first collect some easy computations that we will use repeatedly.
Recall the two examples from \cref{ex:fischer}.

\begin{lemma}\label{le:lx}
    Let $\ell \in \cL$ and $x \in \cP$.
    \begin{enumerate}
        \item\label{le:lx:1} 
            If $x \in \ell$ or if $x$ is not collinear to any point of $\ell$, then $x\ell = 0$ in $A$.
        \item\label{le:lx:2}
            If the subspace spanned by $x$ and $\ell$ is a complete quadrilateral, then there are precisely two lines $m,n$ joining $x$ with $\ell$; in this case, $x\ell = m + n$ in $A$. We also have $x\ell = \ell + \ell'$ where $\ell'$ is the unique line in this subspace different from $\ell$ and not containing $x$.
        \item\label{le:lx:3}
            If the subspace spanned by $x$ and $\ell$ is an affine plane, then there is a unique line $m \neq \ell$ in this subspace parallel to $\ell$ and not containing $x$; in this case, $x\ell = x + \ell + m$ in~$A$.
    \end{enumerate}
\end{lemma}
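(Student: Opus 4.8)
The plan is to verify each of the three cases directly by computing $x\ell$ in the nilpotent Matsuo algebra, using the explicit multiplication rule together with \cref{th:rank3}, which guarantees that the subspace $\langle x, \ell \rangle$ generated by $x$ and the line $\ell$ is either a point/line configuration, a complete quadrilateral, or an affine plane. Writing $\ell = a + b + c$ for the three points on $\ell$, the key observation is that $x\ell = xa + xb + xc$, so in every case the computation reduces to summing the three products $xa, xb, xc$, each of which is governed by whether $x$ is collinear with the respective point. The main structural input is \cref{th:rank3}: since $x$ together with $\ell$ generates a rank-$\leq 3$ Fischer space, the local geometry is completely controlled.

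For \itemref{le:lx}{1}, if $x \in \ell$ then $x$ equals one of $a,b,c$, and each product $x \cdot x = 0$ while the product of $x$ with the other two points on its own line contributes $\eta(a+b+c) = \ell$ twice, which cancels since $2=0$; more cleanly, if $x \in \ell$ this is just $\ad_x$ applied to a sum containing $x$, and one checks the terms cancel. If $x$ is collinear to no point of $\ell$, then $xa = xb = xc = 0$ immediately by the multiplication rule. For \itemref{le:lx}{2}, I would use the complete-quadrilateral structure from \itemref{ex:fischer}{DA2}: an external point $x$ is collinear with exactly two of the three points of $\ell$, say $a$ and $b$ but not $c$. Then $x\ell = xa + xb = \eta(x+a+(x\wedge a)) + \eta(x+b+(x\wedge b))$; the two copies of $x$ cancel, and identifying $x \wedge a$ and $x \wedge b$ as the appropriate points of the quadrilateral lets one rewrite this as $m+n$ (the two joining lines) and equivalently as $\ell + \ell'$. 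The verification that these two expressions agree is the combinatorial heart of this case and is best done by labelling the six points of the complete quadrilateral concretely (e.g.\ via the transpositions of $\Sym(4)$) and checking the incidences.

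For \itemref{le:lx}{3}, I would work in the affine plane of \itemref{ex:fischer}{A3}: here an external point $x$ is collinear with all three points $a,b,c$ of $\ell$ (since in the affine plane of order $3$ every point off a line is joined to each of its three points). So $x\ell = xa+xb+xc = \eta\sum\bigl((x+p+(x\wedge p))\bigr)$ over $p \in \{a,b,c\}$; the three copies of $x$ sum to $x$ (since $3x = x$ over a ring with $2=0$, as $3 = 1$ there), the sum $a+b+c = \ell$, and the three third points $x\wedge a, x\wedge b, x\wedge c$ must be shown to be precisely the three points of the unique line $m$ parallel to $\ell$ through the position determined by $x$. That last incidence claim — that the three points $\{x \wedge p : p \in \ell\}$ form exactly the line $m$ parallel to $\ell$ and missing $x$ — is the one genuinely geometric step, and it is where I expect the main obstacle to lie. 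I would justify it by the standard fact that in $AG(2,3)$ the map $p \mapsto x \wedge p$ sends the line $\ell$ to a line, and a short parity/parallelism argument (or direct coordinatization with $\F_3^2$, taking $x$ as a point off $\ell$ and computing $x \wedge p = -x - p$ up to the affine structure) pins down that image as the claimed parallel line $m$.
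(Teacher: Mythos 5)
Your proposal is correct, and it is precisely the direct case-by-case computation that the paper intends, since its own proof of \cref{le:lx} is just ``This is straightforward.'' One small streamlining for \cref{le:lx:2}: by \itemref{rem:indep}{1}, the products $xa$ and $xb$ \emph{are} the line nilpotents $m$ and $n$ of the two joining lines, so $x\ell = m+n$ is immediate without any cancellation, and $m+n = \ell+\ell'$ then follows because each point of the complete quadrilateral lies on exactly two of its four lines, whence $\ell+m+n+\ell' = 0$ in $A$.
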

\begin{proof}
    This is straightforward.
\end{proof}

\begin{lemma}\label{le:lm}
    Let $\ell,m \in \cL$.
    \begin{enumerate}
        \item\label{le:lm:1}
            If $\ell = m$, then $\ell m = 0$ in $A$.
        \item\label{le:lm:2}
            If $\ell \cap m = \{ x \}$ and they span a complete quadrilateral, then $\ell m = \ell + m$ in~$A$.
        \item\label{le:lm:3}
            If $\ell \cap m = \{ x \}$ and they span an affine plane, then $\ell m = a+b+c+d$ in~$A$, where $a,b,c,d$ are the four points in this subspace not on $\ell$ nor on $m$.
        \item\label{le:lm:4}
            If $\ell \cap m = \emptyset$ and they span an affine plane, then $\ell m = s$ in~$A$, where $s$ is the sum of all $9$ points in this subspace.
    \end{enumerate}
\end{lemma}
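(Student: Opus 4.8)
The plan is to expand each product of line nilpotents into point-times-line products and feed those into \cref{le:lx}. Writing $\ell = \sum_{x \in \ell} x$ and using bilinearity and commutativity of $A$, we have
\[ \ell m = \sum_{x \in \ell} x \cdot m , \]
so it suffices to understand $x \cdot m$ for each of the three points $x$ on $\ell$ and then to add the three contributions, simplifying with $2 = 0$ (and hence $3 = 1$) in $R$. Case (i) is then immediate: if $\ell = m$, every summand $x \cdot \ell$ with $x \in \ell$ vanishes by \itemref{le:lx}{1}, so $\ell m = 0$ (this also follows from the fact that every element of $A$ squares to zero).

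For the remaining cases I would first record, using \cref{th:rank3}, the type of the rank-$3$ subspace $\langle \ell, m\rangle$ and observe that for a point $x \in \ell$ not lying on $m$ the subspace $\langle x, m\rangle$ has the same type: a point off a line already spans the whole complete quadrilateral in case (ii), and the whole affine plane in cases (iii)--(iv), since a point off a line in the affine plane of order $3$ is collinear with all three of its points. This pins down which clause of \cref{le:lx} governs each summand. In case (ii) the intersection point of $\ell$ and $m$ contributes $0$, while each of the other two points $x \in \ell$ contributes $m + m'_x$ by \itemref{le:lx}{2}, where $m'_x$ is the unique line of the quadrilateral different from $m$ and missing $x$. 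The two copies of $m$ cancel, and a short incidence count identifies $m'_{x_1}$ and $m'_{x_2}$ as exactly the two lines of the quadrilateral other than $\ell$ and $m$. Since every point of a complete quadrilateral lies on exactly two of its four lines, the four line nilpotents sum to $0$; hence $m'_{x_1} + m'_{x_2} = \ell + m$, as claimed.

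The affine-plane cases run the same way, the bookkeeping now taking place inside the parallel class of $m$. In case (iv), $\ell$ and $m$ are parallel, so for every $x \in \ell$ the line through $x$ parallel to $m$ is $\ell$ itself, and \itemref{le:lx}{3} gives $x \cdot m = x + m + n$ with $n$ the third line of the parallel class, the same for all three $x$; summing and using $3 = 1$ collapses this to $\ell + m + n = s$. In case (iii) the two non-intersection points $x_1, x_2 \in \ell$ give $x_i \cdot m = x_i + m + m'_{x_i}$, where the parallel lines that appear are the two members $n_1, n_2$ of $m$'s parallel class other than $m$ (the line through $x_i$ parallel to $m$ cannot be $\ell$, since $\ell$ meets $m$). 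Adding the two contributions, cancelling the resulting $2m$, and rewriting $n_1 + n_2$ as the sum of all points off $m$, one reaches $a + b + c + d$ after the characteristic-$2$ cancellations.

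I expect case (iii) to be the only genuinely fiddly one: it is the sole situation in which the point terms $x_i$ do not cancel among themselves and must be recombined with $s$ and with $m$ to produce the symmetric answer $a+b+c+d$. The main care therefore lies in correctly tracking the parallel class of $m$ and the residual point terms, rather than in any conceptual difficulty, as each individual product is dictated by \cref{le:lx} once the type of the ambient rank-$3$ subspace is known.
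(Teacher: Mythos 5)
Your proposal is correct, and it matches the paper's approach in the only sense possible: the paper's entire proof of this lemma is ``This is straightforward,'' and your expansion $\ell m = \sum_{x \in \ell} x \cdot m$ followed by a case-by-case application of \cref{le:lx} (with the characteristic-$2$ cancellations, the sum-of-four-lines identity in the quadrilateral, and the parallel-class bookkeeping in the affine plane) is exactly the intended direct verification. All three nontrivial cases check out as you describe, so you have simply supplied the details the paper omits.
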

\begin{proof}
    This is straightforward.
\end{proof}

Nilpotent Matsuo algebras are always somewhat degenerate, as they contain annihilating elements.
\begin{definition}
    An element $a \in A$ is \emph{annihilating} if $ab = 0$ for all $b \in A$. We write
    \[ \Ann(A) := \{ a \in A \mid a \text{ is annihilating} \} . \]
\end{definition}
\begin{lemma}\label{le:ann}
    Let $s$ be the sum of all points in $\cP$.
    Then $\Ann(A) = \langle s \rangle$.
\end{lemma}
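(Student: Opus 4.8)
The plan is to prove the two inclusions separately, with the nontrivial one resting on the connectedness of the Fischer space. Throughout I will use that $\Ann(A)$ is visibly an $R$-submodule of $A$ and that the multiplication is $R$-bilinear and commutative, so that an element $a \in A$ is annihilating if and only if $a \cdot y = 0$ for every point $y \in \cP$; this is the only reformulation needed, and it reduces everything to computing products against basis points.

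First I would check the easy inclusion $\langle s \rangle \subseteq \Ann(A)$ by showing $s \in \Ann(A)$, i.e.\ $s \cdot y = 0$ for each point $y$. Writing $s \cdot y = \sum_{x \in \cP} x \cdot y$, only the points $x$ collinear to $y$ contribute, and these come in pairs $\{x, x \wedge y\}$, one pair per line through $y$ (two lines meet in at most one point, so each collinear $x$ lies on a unique line through $y$). For such a pair the products $x \cdot y = x + y + (x \wedge y)$ and $(x \wedge y) \cdot y = (x \wedge y) + y + x$ coincide, so their sum vanishes since $2 = 0$; summing over all lines through $y$ gives $s \cdot y = 0$.

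The substance is the reverse inclusion $\Ann(A) \subseteq \langle s \rangle$. Writing an annihilating element as $a = \sum_{x \in \cP} c_x x$, I would fix a point $y$ and group the contributions to $a \cdot y$ by the lines through $y$: a line $\ell = \{y, p, q\}$ contributes $c_p (p + y + q) + c_q (q + y + p) = (c_p + c_q)\, s_\ell$, so that $a \cdot y = \sum_{\ell \ni y} (c_p + c_q)\, s_\ell$, where $p, q$ denote the two points of $\ell$ other than $y$. The key structural point is again that distinct lines through $y$ meet only in $y$, so the points $p$ appearing as ``other points'' are pairwise distinct across these lines; hence the coefficient of such a $p$ in the basis expansion of $a \cdot y$ is exactly $c_p + c_q$. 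Setting $a \cdot y = 0$ therefore forces $c_p + c_q = 0$, i.e.\ $c_p = c_q$ (using $2 = 0$), for the two points other than $y$ on each line through $y$. Letting $y$ run over the three points of a single line $\{x_1, x_2, x_3\}$ then yields $c_{x_1} = c_{x_2} = c_{x_3}$: the function $x \mapsto c_x$ is constant along every line.

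Finally I would invoke connectedness. Two collinear points have equal coefficients by the previous step, and since the Fischer space is connected (\cref{rem:connected}), any two points are linked by a chain of pairwise collinear points; hence $x \mapsto c_x$ is globally constant, say equal to $c$, and $a = c\,s \in \langle s \rangle$. I do not expect a serious obstacle here: the only points needing care are the bookkeeping that the coefficient of a point $p \sim y$ in $a \cdot y$ is fed by a single line (guaranteed by the partial-linear-space axiom), and the repeated use of $2 = 0$ to pass from $c_p + c_q = 0$ to $c_p = c_q$ and from the paired products to their cancellation.
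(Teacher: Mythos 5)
Your proposal is correct and takes essentially the same route as the paper's proof: the inclusion $\langle s \rangle \subseteq \Ann(A)$ is obtained by cancelling the products in pairs along each line through a fixed point (the paper packages this cancellation as $\ell x = 0$ via \cref{le:lx}), and for the reverse inclusion you extract, exactly as the paper does, the relation $c_p = c_q$ for the two points $p, q$ other than $y$ on each line through $y$ from $a \cdot y = 0$, then conclude by connectedness (\cref{rem:connected}). There is no gap.
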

\begin{proof}
    To show that $s \in \Ann(A)$, it suffices to show that $sx = 0$ for all $x \in \cP$. Let $\cL_x$ be the set of lines containing $x$. As $yx = 0$ whenever $y=x$ or $y \not\sim x$, the product $sx$ is equal to the sum $\sum_{\ell \in \cL_x} \ell x$, which is $0$ by \itemref{le:lx}{1}.
    
    Conversely, assume that $a \in \Ann(A)$ and write $a = \sum_{p \in \cP} a_p p$ with $a_p \in R$. Let $p \sim q$ be any pair of distinct collinear points in $\geom$ and let $x = p \wedge q$. Then the fact that $ax = 0$ implies that $a_p = a_q$. Because $\geom$ is connected by \cref{rem:connected}, we conclude that all $a_p$ are equal and hence $a$ is a multiple of $s$.
\end{proof}
\begin{definition}
    We define the \emph{reduced nilpotent Matsuo algebra} to be the quotient $A' := A / \Ann(A)$.
\end{definition}

We will now first analyze the two Fischer spaces from \cref{ex:fischer}.

\subsection{The complete quadrilateral}\label{ss:CQ}

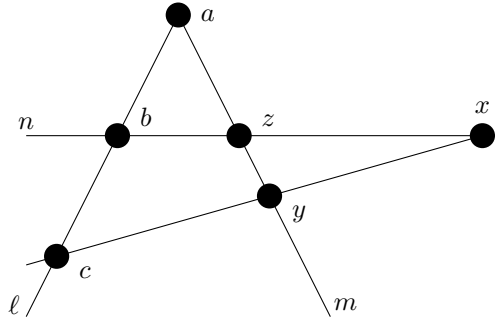
\begin{figure}[ht!]
    \begin{tikzpicture}[scale=.8]
        \DATwoDrawing{$a$}{$b$}{$c$}{$x$}{$y$}{$z$}
        \node at (-.2,.2) {$\ell$};
        \node at (5.25,.2) {$m$};
        \node at (0,3.2) {$n$};
    \end{tikzpicture}
    \caption{The complete quadrilateral.}\label{fig:CQ}
\end{figure}

Let $\geom$ be the complete quadrilateral labeled as in \cref{fig:CQ} and let $\ell$ be the line $\{ a,b,c \}$. We will analyze the decomposition of $A$ arising from the operator $\ad_\ell$.
For our computations, we will also need the lines $m = \{ a,y,z \}$ and $n = \{ b,x,z \}$ as in \cref{fig:CQ}.

We begin with a lemma that is useful also for Fischer spaces of symplectic type of higher rank; see \cref{def:symplectic}.
\begin{lemma}\label{le:lla=la}
    Let $\geom$ be a Fischer space of symplectic type and let $\ell \in \cL$.
    Then the operator $\ad_\ell$ is idempotent, i.e., $\ell \cdot \ell a = \ell a$ for all $a \in A$.
\end{lemma}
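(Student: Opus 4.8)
The plan is to show that $\ad_\ell^2 = \ad_\ell$ as operators on $A$, which by linearity reduces to checking $\ell \cdot \ell a = \ell a$ on a spanning set. Since $A$ is spanned by the points $x \in \cP$, it suffices to verify the identity $\ell \cdot \ell x = \ell x$ for each point $x \in \cP$. I would organize the verification according to the position of $x$ relative to $\ell$, using \cref{le:lx} to compute $\ell x$ in each case.

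First, if $x \in \ell$ or $x$ is not collinear with any point of $\ell$, then $\ell x = 0$ by \itemref{le:lx}{1}, so $\ell \cdot \ell x = 0 = \ell x$ trivially. The only remaining case, since $\geom$ is of symplectic type, is that the subspace spanned by $x$ and $\ell$ is a complete quadrilateral (the affine-plane case of \itemref{le:lx}{3} cannot occur by \cref{def:symplectic}). In this case \itemref{le:lx}{2} gives $\ell x = \ell + \ell'$, where $\ell'$ is the unique line in the quadrilateral other than $\ell$ not passing through $x$. I would then compute $\ell \cdot \ell x = \ell(\ell + \ell') = \ell \ell + \ell \ell'$. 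The term $\ell\ell$ vanishes by \itemref{le:lm}{1}, and the lines $\ell$ and $\ell'$ are two distinct lines spanning a complete quadrilateral, so I can apply \itemref{le:lm}{2} to get $\ell \ell' = \ell + \ell'$ (this requires knowing $\ell$ and $\ell'$ meet in a point, which holds in a complete quadrilateral). Hence $\ell \cdot \ell x = \ell + \ell' = \ell x$, as desired.

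The main subtlety, and the step I would check most carefully, is the geometric bookkeeping in the complete-quadrilateral case: verifying that $\ell$ and the auxiliary line $\ell'$ indeed intersect in a single point and span a complete quadrilateral, so that \itemref{le:lm}{2} genuinely applies. A complete quadrilateral has three lines through each of its points and every two of its lines meet, so $\ell$ and $\ell'$ do intersect; but because we are working inside a possibly larger symplectic Fischer space, I would want to confirm that the relevant $2$-generated subspace (that of $\ell$ and $\ell'$) is again a complete quadrilateral rather than an affine plane — which is guaranteed precisely by the symplectic-type hypothesis via \cref{def:symplectic} together with \cref{th:rank3}. Once that is settled, the computation collapses to the two line-product identities above and the result follows.
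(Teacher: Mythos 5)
Your proof is correct and is essentially the paper's own argument: reduce to points $x \in \cP$, dismiss the cases $x \in \ell$ and $x$ not collinear with $\ell$ via \itemref{le:lx}{1}, and in the remaining case (which the symplectic hypothesis forces to be a complete quadrilateral) compute
\[ \ell \cdot \ell x = \ell(\ell + \ell') = \ell \ell' = \ell + \ell' = \ell x \]
using \itemref{le:lx}{2}, \itemref{le:lm}{1} and \itemref{le:lm}{2}. Your closing check---that $\ell$ and $\ell'$ meet in a point and that their span is again a complete quadrilateral, so that \itemref{le:lm}{2} applies---is precisely the geometric fact the paper also relies on (implicitly, via its labeling of \cref{fig:CQ}).
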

\begin{proof}
    It suffices to show that $\ell \cdot \ell x = \ell x$ for all $x \in \cP$, and this is obvious if $x \in \ell$ or if $x$ is not collinear with any point of $\ell$.
    Assume now that $x$ is collinear with a point of $\ell$ and consider the subspace generated by $\ell$ and $x$.
    By assumption, this subspace is a complete quadrilateral, so we may assume that $\ell$ and $x$ are as in \cref{fig:CQ}.
    Then $\ell x = \ell + m$ and hence, by \itemref{le:lx}{2} and \itemref{le:lm}{2},
    \[ \ell \cdot \ell x = \ell(\ell + m) = \ell m = \ell + m = \ell x. \qedhere \]
\end{proof}
\begin{remark}
    The statement of \cref{le:lla=la} is false for the affine plane, so it is false for any Fischer space that is \emph{not} of symplectic type.
\end{remark}

We now assume again that $\geom$ is a complete quadrilateral.
As before, let $s = a+b+c+x+y+z \in A$ be the sum of all points.
\begin{proposition}\label{pr:des-decs}
    We have a decomposition $A = A^\ell_0 \oplus A^\ell_1$ into eigenspaces for the operator $\ad_\ell$ with eigenvalues $0$ and $1$, respectively, where
    \begin{align*}
        A^\ell_0 &= \langle a, b, c, x+y+z \rangle = \langle a, b, c, s \rangle , \\
        A^\ell_1 &= \langle \ell x, \ell y \rangle .
    \end{align*}
\end{proposition}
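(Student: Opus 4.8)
The plan is to exploit \cref{le:lla=la}: since the complete quadrilateral is of symplectic type, the operator $P := \ad_\ell$ satisfies $P^2 = P$, i.e.\ it is an idempotent linear operator on $A$. For any such projection we automatically obtain $A = \ker P \oplus \im P$, where $\ker P$ is the $0$\dash eigenspace and $\im P$ is the $1$\dash eigenspace (if $v = P(w)$ then $P(v) = P^2(w) = P(w) = v$). Thus the splitting into eigenspaces with eigenvalues $0$ and $1$ exists before any computation, and it remains only to identify $A_0^\ell = \ker P$ and $A_1^\ell = \im P$ with the claimed subspaces.

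First I would compute $P$ on the basis $\cP = \{a,b,c,x,y,z\}$. By \itemref{le:lx}{1} we have $\ell a = \ell b = \ell c = 0$, so $a,b,c \in \ker P$. For the remaining three points I would use \itemref{le:lx}{2}, which in the notation of \cref{fig:CQ} gives $\ell x = \ell + m$, $\ell y = \ell + n$ and $\ell z = \ell + k$, where $k$ denotes the fourth line $\{c,x,y\}$. Summing these and using that in characteristic $2$ every point lies on exactly two of the four lines (so $\ell + m + n + k = 0$, whence $m+n+k = \ell$, and $3\ell = \ell$) yields the key relation
\[ \ell x + \ell y + \ell z = 3\ell + (m+n+k) = \ell + \ell = 0 . \]
Hence $\im P$ is spanned by $\ell x, \ell y, \ell z$ with $\ell z = \ell x + \ell y$, so $\im P = \langle \ell x, \ell y \rangle = A_1^\ell$, and these two vectors are visibly $R$\dash independent, so $A_1^\ell$ is free of rank $2$.

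It remains to pin down the kernel. The same relation gives $\ell(x+y+z) = \ell x + \ell y + \ell z = 0$, so together with $a,b,c$ we obtain four independent elements $a,b,c,x+y+z \in \ker P$. Since $\im P$ has rank $2$ and $A$ has rank $6$, the kernel has rank $4$, and therefore $\ker P = \langle a,b,c,x+y+z \rangle = A_0^\ell$. The two displayed descriptions of $A_0^\ell$ agree because $s = \ell + (x+y+z) = (a+b+c)+(x+y+z)$, so $x+y+z$ and $s$ differ by an element of $\langle a,b,c \rangle$.

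I expect the only genuine subtlety to be justifying the rank count over an arbitrary ring $R$ with $2=0$, rather than over a field: one must check that $\{a,b,c,x+y+z,\ell x,\ell y\}$ is actually a free basis of $A$. The cleanest way to settle this, which also makes the whole argument self-contained, is to write the change-of-basis matrix from $\cP$ to these six vectors and verify that its determinant is odd, hence a unit in $R$; a direct expansion gives determinant $1$ over $\F_2$. This simultaneously establishes the direct sum, the freeness of both summands, and their stated ranks.
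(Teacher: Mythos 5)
Your proof is correct, and it shares its key ingredients with the paper's: both arguments rest on \cref{le:lla=la} (idempotency of $\ad_\ell$) and on \itemref{le:lx}{1} for $a,b,c \in A_0^\ell$. The differences lie in how the argument is completed. The paper gets its fourth kernel vector by citing \cref{le:ann} (so $\ell s = 0$ because $s$ annihilates everything), notes $\langle \ell x, \ell y \rangle \leq A_1^\ell$ by \cref{le:lla=la}, checks that $\ell x$ and $\ell y$ are independent, and finishes with a one-line dimension count. You instead extract the splitting $A = \ker(\ad_\ell) \oplus \im(\ad_\ell)$ abstractly from idempotency, compute $\im(\ad_\ell)$ as the span of the images of the six basis points using the relation $\ell + m + n + k = 0$ (your identity $\ell x + \ell y + \ell z = 0$ is equivalent to the paper's $\ell s = 0$, since $s = \ell + (x+y+z)$ and $\ell \cdot \ell = 0$), and then pin down the kernel by verifying that your six vectors form a free basis via a determinant which is a unit in $R$. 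What your route buys is rigor over an arbitrary coefficient ring with $2 = 0$: a bare dimension count only shows that $A_0^\ell \oplus A_1^\ell$ contains a free rank-$6$ submodule, which over a non-field $R$ does not by itself give spanning (compare $tR \subsetneq R$ for $R = \F_2[t]$), so your determinant check --- or equivalently the projection splitting itself --- supplies exactly the justification the paper leaves implicit in the phrase ``by dimension count''. What the paper's route buys is brevity and reuse of \cref{le:ann}, which it needs later anyway; your only loose end is the final identification $\ker(\ad_\ell) = \langle a,b,c,x+y+z \rangle$, which deserves one explicit sentence (any $w \in \ker(\ad_\ell)$ decomposes as $u + v$ with $u$ in your rank-$4$ summand and $v \in \im(\ad_\ell)$, and then $v = w - u \in \ker(\ad_\ell) \cap \im(\ad_\ell) = 0$), but this is routine given what you established.
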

\begin{proof}
    We have $\ell a = \ell b = \ell c = 0$ by \itemref{le:lx}{1} and $\ell s = 0$ by \cref{le:ann}.
    Hence
    \[ \langle a, b, c, s \rangle \leq A^\ell_0 . \]
    By \cref{le:lla=la}, we already know that
    \[ \langle \ell x, \ell y \rangle \leq A^\ell_1 . \]
    Notice that $\ell x$ and $\ell y$ are linearly independent because $\ell x = b + c + y + z$ and $\ell y = a + c + x + z$.
    By dimension count, the result follows.
\end{proof}
\begin{remark}
    The subspace $A_1^\ell$ turns out to be independent of the choice of $\ell$. Indeed, it is equal to $\langle s + a + x, s + b + y, s + c + z \rangle$ (notice that the sum of these three vectors is $0$) and this is independent of the choice of $\ell$.
\end{remark}

\begin{theorem}\label{th:des-decalg}
    Let $\geom$ be the complete quadrilateral. Then the nilpotent Matsuo algebra $A = M_R(\geom, 1)$ is a $6$-dimensional decomposition algebra with fusion law
	\begin{center}
		\renewcommand{\arraystretch}{1.2}
		\setlength{\tabcolsep}{0.75em}
		\begin{tabular}[b]{c|ccc}
			$*$	& $0$ & $1$ \\
			\hline
			$0$ & $\{ 0 \}$ & $\{ 1 \}$ \\
			$1$ & $\{ 1 \}$ & $\emptyset$
		\end{tabular},
	\end{center}
    where for each of the $4$ lines $\ell \in \cL$, there is a decomposition $A = A^\ell_0 \oplus A^\ell_1$ into eigenspaces for $\ad_\ell$ with eigenvalues $0$ and $1$, respectively.
    The subspaces $A^\ell_1$ coincide for all $4$ choices of $\ell \in \cL$.
\end{theorem}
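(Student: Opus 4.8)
The plan is to do everything for one line and transport to the others by symmetry. \Cref{pr:des-decs} already gives, for $\ell = \{a,b,c\}$, the eigenspace decomposition $A = A_0^\ell \oplus A_1^\ell$ with $A_0^\ell = \langle a,b,c,s\rangle$ and $A_1^\ell = \langle \ell x,\ell y\rangle$. The automorphism group of the complete quadrilateral contains $\Sym(4)$ (the three transpositions fixing a common point of $\{1,2,3,4\}$ form a line, and conjugation permutes the four lines transitively), and every automorphism of $\geom$ induces an algebra automorphism of $A$ carrying the $\ad_\ell$-eigenspaces to the $\ad_{g\ell}$-eigenspaces, so that $g(A_i^\ell) = A_i^{g\ell}$. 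Hence the displayed form of the decomposition holds verbatim for each of the four lines, and it suffices to verify the fusion law for the single decomposition attached to $\ell$.

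For the fusion law I would check the three containments corresponding to the entries $0\ast 0 = \{0\}$, $1\ast 1 = \emptyset$ and $0\ast 1 = 1\ast 0 = \{1\}$ on the spanning sets above; write $p = \{c,x,y\}$ for the fourth line. The entry $0\ast 0$ is immediate: $s$ is annihilating by \cref{le:ann}, the squares of $a,b,c$ vanish, and any product of two distinct points of $\ell$ equals the line nilpotent $\ell = a+b+c \in A_0^\ell$. For the entry $1\ast 1 = \emptyset$, i.e.\ $A_1^\ell A_1^\ell = 0$, it is enough (by commutativity and $2=0$, which already force every square in $A$ to vanish) to show $(\ell x)(\ell y) = 0$; writing $\ell x = n+p$ and $\ell y = m+p$ via \itemref{le:lx}{2} and multiplying out with \itemref{le:lm}{2} and \itemref{le:lm}{1} gives
\[ (\ell x)(\ell y) = (n+p)(m+p) = (n+m)+(n+p)+(p+m) = 0 . \]
The remaining entry $A_0^\ell A_1^\ell \subseteq A_1^\ell$ is the one that really needs computation: I would evaluate the products of $a,b,c$ with $\ell x$ and $\ell y$ directly (for instance $a\cdot \ell x = 0$ and $a\cdot \ell y = \ell + m = \ell x$), using the $\Sym(3)$ that stabilizes $\ell$ and permutes the three non-collinear pairs $\{a,x\},\{b,y\},\{c,z\}$ to cut down the number of cases.

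For the coincidence of the $A_1^\ell$, I would rewrite the generators as $\ell x = s+a+x$ and $\ell y = s+b+y$, so that
\[ A_1^\ell = \langle s+a+x,\ s+b+y\rangle = \langle s+a+x,\ s+b+y,\ s+c+z\rangle =: V, \]
where $\{a,x\},\{b,y\},\{c,z\}$ are precisely the three non-collinear pairs of $\geom$. Since every automorphism of $\geom$ fixes $s$ and permutes the non-collinear pairs, the subspace $V$ is $\Aut(\geom)$-invariant; combined with $A_1^{\ell'} = gA_1^\ell = gV$ whenever $g\ell = \ell'$, this yields $A_1^{\ell'} = V$ for all four lines.

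The main obstacle is the middle fusion rule $A_0^\ell A_1^\ell \subseteq A_1^\ell$: unlike the $0\ast 0$ and $1\ast 1$ entries, it is not forced by any associativity-type identity in this non-associative setting, so it must be confirmed by explicit multiplication, with the line-stabilizing symmetry used only to trim the case analysis.
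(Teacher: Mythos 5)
Your proposal is correct and takes essentially the same route as the paper's proof: both rest on \cref{pr:des-decs} and verify the three fusion rules by direct computation with \cref{le:lx}, \cref{le:lm} and \cref{le:ann}, via the same key products $a\cdot\ell x=0$, $a\cdot\ell y=\ell x$ and $\ell x\cdot\ell y=0$, and the same rewriting $A_1^\ell=\langle s+a+x,\ s+b+y,\ s+c+z\rangle$ (which the paper records in the remark following \cref{pr:des-decs}). The only difference is cosmetic: you make explicit the $\Sym(4)$-symmetry that transports the computation between the four lines and trims the case analysis, whereas the paper leaves this implicit by working with a generic labeling.
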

\begin{proof}
    We keep the notations from \cref{pr:des-decs}. It remains to show that $A^\ell_0$ is a subalgebra of $A$, that $A^\ell_0 \cdot A^\ell_1 \subseteq A^\ell_1$ and that $A^\ell_1 \cdot A^\ell_1 = 0$.
    
    To show that $A^\ell_0$ is a subalgebra of $A$, we first observe that $\langle a, b, c \rangle$ is a subalgebra of $A$ because the line $\ell = \{ a,b,c \}$ is a subspace of $\geom$. Since $s \in \Ann(A)$ by \cref{le:ann}, this implies immediately that $\langle a,b,c,s \rangle$ is indeed a subalgebra of $A$.
    
    Next, we compute that
    \begin{align*}
        a \cdot \ell x &= a (\ell + m) = a \ell + am = 0 \quad \text{and} \\
        a \cdot \ell y &= a (\ell + n) = an = \ell + m = \ell x \in A^\ell_1.
    \end{align*}
    
    Finally, we have
    \[ \ell x \cdot \ell y = (\ell + m)(\ell + n) = \ell m + \ell n + mn = (\ell + m) + (\ell + n) + (m + n) = 0 . \qedhere \]
\end{proof}

\begin{corollary}\label{cor:des-decalg'}
    Let $\geom$ be the complete quadrilateral and let $A = M_R(\geom, 1)$. Then the reduced nilpotent Matsuo algebra $A' = A / \Ann(A)$ is a $5$-dimensional decomposition algebra with fusion law
	\begin{center}
		\renewcommand{\arraystretch}{1.2}
		\setlength{\tabcolsep}{0.75em}
		\begin{tabular}[b]{c|ccc}
			$*$	& $0$ & $1$ \\
			\hline
			$0$ & $\{ 0 \}$ & $\{ 1 \}$ \\
			$1$ & $\{ 1 \}$ & $\emptyset$
		\end{tabular},
	\end{center}
    where for each of the $4$ lines $\ell \in \cL$, there is a decomposition $A' = A'^\ell_0 \oplus A'^\ell_1$ into eigenspaces for $\ad_\ell$ with eigenvalues $0$ and $1$, respectively.
    The subspaces $A'^\ell_1$ coincide for all $4$ choices of $\ell \in \cL$.
\end{corollary}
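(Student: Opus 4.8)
The plan is to deduce this corollary directly from \cref{th:des-decalg} by passing to the quotient, since all the structural work has already been carried out for the full algebra $A$. First I would record that $\Ann(A)$ is an ideal of $A$ (in any commutative algebra the annihilator is an ideal), so that $A' = A/\Ann(A)$ is a well-defined quotient algebra and the canonical projection $\pi \colon A \to A'$ is a surjective algebra homomorphism. By \cref{le:ann} the annihilator $\Ann(A) = \langle s \rangle$ is one-dimensional, whence $\dim A' = 6 - 1 = 5$, matching the claimed dimension.

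The key observation is that for each of the four lines $\ell \in \cL$ we have $s \in A^\ell_0$ by \cref{pr:des-decs}, so $\Ann(A) = \langle s \rangle \subseteq A^\ell_0$. Consequently the decomposition $A = A^\ell_0 \oplus A^\ell_1$ of \cref{th:des-decalg} descends cleanly: setting $A'^\ell_i := \pi(A^\ell_i)$, one obtains $A' = A'^\ell_0 \oplus A'^\ell_1$. The sum remains direct because $\langle s \rangle \cap A^\ell_1 = 0$ (as $s \in A^\ell_0$ and the sum in $A$ is already direct), so $\pi$ restricts to an isomorphism on $A^\ell_1$; thus $\dim A'^\ell_1 = 2$ and $\dim A'^\ell_0 = 3$. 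Since $\pi$ is an algebra homomorphism, the fusion inclusions $A^\ell_i \cdot A^\ell_j \subseteq A^\ell_{i*j}$ from \cref{th:des-decalg} push forward to $A'^\ell_i \cdot A'^\ell_j \subseteq A'^\ell_{i*j}$, so $A'$ carries the same fusion law. Moreover $\pi$ intertwines $\ad_\ell$ with $\ad_{\pi(\ell)}$, i.e.\ $\pi \circ \ad_\ell = \ad_{\pi(\ell)} \circ \pi$, so the fact that $\ell$ acts as $0$ on $A^\ell_0$ and as the identity on $A^\ell_1$ transfers verbatim, making $A'^\ell_0$ and $A'^\ell_1$ the $0$- and $1$-eigenspaces of $\ad_\ell$ on $A'$. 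Finally, the independence of $A^\ell_1$ from the choice of $\ell$ (noted in the remark following \cref{pr:des-decs}) is preserved under $\pi$, so the four subspaces $A'^\ell_1$ again coincide.

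Honestly, there is no substantial obstacle here: the entire content is already contained in \cref{th:des-decalg}, and the argument is uniform and purely formal. The only point that deserves any care is the directness of the descended decomposition, and this is guaranteed the instant one records that $\Ann(A)$ lies inside the $0$-part of \emph{every} decomposition; once that containment is in hand, everything transfers through $\pi$ automatically.
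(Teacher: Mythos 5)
Your proof is correct and is essentially the paper's own (implicit) argument: the paper states this as an immediate corollary of \cref{th:des-decalg}, relying precisely on the fact that $\Ann(A) = \langle s \rangle \subseteq A^\ell_0$ for every line $\ell$, so that the decompositions, the fusion law, and the eigenspace properties all descend through the quotient map. Your write-up simply makes explicit the routine verifications (directness of the descended sum, intertwining of $\ad_\ell$) that the paper leaves to the reader.
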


\subsection{The affine plane of order 3}

The situation for the affine plane is more complicated.
Again, the only eigenvalues for $\ad_\ell$ will be $0$ and $1$, but the operator $\ad_\ell$ is not semisimple, so we need to pass to the generalized eigenspaces instead.

\begin{center}
\begin{tikzpicture}[scale=0.8,every node/.style={fill=black,circle}]
	\AThreeDrawing{$a_1$}{$a_2$}{$a_3$}{$b_1$}{$b_2$}{$b_3$}{$c_1$}{$c_2$}{$c_3$}
\end{tikzpicture}
\end{center}

Let $\geom$ be the affine plane labeled as above and let $\ell$ be the line $\{ a_1, a_2, a_3 \}$. We also set $m = \{ b_1, b_2, b_3 \}$ and $n = \{ c_1, c_2, c_3 \}$.
Let $s$ be the sum of all $9$ points of $\geom$.

\begin{proposition}\label{pr:aff-decs}
    The operator $\ad_\ell$ has eigenvalues $0$ and $1$, with corresponding eigenspaces
    \begin{align*}
        A^\ell_0 &= \langle a_1, a_2, a_3, s \rangle , \\
        A^\ell_1 &= \langle b_1 + b_2, b_1 + b_3, c_1 + c_2, c_1 + c_3 \rangle
    \end{align*}
    and generalized eigenspaces
    \begin{align*}
        \widetilde A^\ell_0 &= \langle a_1, a_2, a_3, m, n \rangle , \\
        \widetilde A^\ell_1 &= A^\ell_1 .
    \end{align*}
    In particular, $A = \widetilde A^\ell_0 \oplus A^\ell_1$.
\end{proposition}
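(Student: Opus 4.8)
The plan is to compute $\ad_\ell$ explicitly on the standard basis $\cP$ and then to reorganise that basis so that the Jordan structure of $\ad_\ell$ becomes transparent. Writing $\ell = a_1 + a_2 + a_3$, $m = b_1 + b_2 + b_3$ and $n = c_1 + c_2 + c_3$ for the line nilpotents, \itemref{le:lx}{3} applies to every point off $\ell$, since the subspace such a point spans with $\ell$ is the whole affine plane; it yields $\ad_\ell(a_i) = 0$, $\ad_\ell(b_i) = b_i + \ell + n$ and $\ad_\ell(c_i) = c_i + \ell + m$ for $i = 1,2,3$ (the parallel line occurring in each formula is the one of the class $\{\ell,m,n\}$ missing the point in question). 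From these formulas the eigenvectors read off at once: for two points on the same line the correction term occurs twice and cancels since $2 = 0$, so $\ad_\ell(b_i + b_j) = b_i + b_j$ and $\ad_\ell(c_i + c_j) = c_i + c_j$, giving $\langle b_1+b_2, b_1+b_3, c_1+c_2, c_1+c_3 \rangle \subseteq A^\ell_1$; likewise $\ad_\ell(a_i) = 0$ and $\ad_\ell(s) = 0$ (as $s \in \Ann(A)$ by \cref{le:ann}) give $\langle a_1, a_2, a_3, s \rangle \subseteq A^\ell_0$. The decisive computation is on the two remaining line nilpotents: using $3 = 1$ in $R$ one finds $\ad_\ell(m) = m + \ell + n = s = \ad_\ell(n)$, and since $s \in \Ann(A) \subseteq \ker \ad_\ell$ this shows $\ad_\ell^2(m) = \ad_\ell^2(n) = 0$ while $\ad_\ell(m), \ad_\ell(n) \neq 0$. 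Hence $\ad_\ell$ is genuinely non-semisimple and $m,n$ are generalized eigenvectors of weight $0$ that are not eigenvectors.

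Next I would pass to the basis obtained by replacing $b_1,b_2,b_3$ by $b_1+b_2, b_1+b_3, m$ and $c_1,c_2,c_3$ by $c_1+c_2, c_1+c_3, n$; this is an $R$-basis because the change-of-basis matrix is block diagonal with each block of determinant a unit, hence unimodular. In this basis $\ad_\ell$ visibly preserves the two free submodules $W_1 := \langle b_1+b_2, b_1+b_3, c_1+c_2, c_1+c_3 \rangle$ and $W_0 := \langle a_1, a_2, a_3, m, n \rangle$: it restricts to the identity on $W_1$, while on $W_0$ its image lies in $\langle s \rangle \subseteq W_0$ (note $s = \ell + m + n \in W_0$), so that $\ad_\ell^2 = 0$ there. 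Thus $A = W_0 \oplus W_1$ splits into $\ad_\ell$-invariant free submodules on which $\ad_\ell$ is nilpotent and the identity respectively; this is precisely the generalized eigenspace decomposition, so $\widetilde A^\ell_0 = W_0$, and since $\ad_\ell$ is nilpotent on $W_0$ (so that $\ad_\ell - \id$ is invertible there) also $\widetilde A^\ell_1 = A^\ell_1 = W_1$. In particular $A = \widetilde A^\ell_0 \oplus A^\ell_1$ and the only eigenvalues of $\ad_\ell$ are $0$ and $1$.

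Finally I would recover the ordinary $0$-eigenspace inside $W_0$: an element $\alpha_1 a_1 + \alpha_2 a_2 + \alpha_3 a_3 + \mu m + \nu n$ is annihilated by $\ad_\ell$ exactly when $(\mu + \nu) s = 0$, and as $s$ is unimodular this forces $\mu = \nu$; hence $A^\ell_0 = \langle a_1, a_2, a_3, m + n \rangle = \langle a_1, a_2, a_3, s \rangle$, using $m + n = \ell + s$. The step demanding the most care is that all of this is over an arbitrary ring $R$, not a field, so I must check throughout that $W_0$ and $W_1$ are free direct summands and that the kernel and invertibility statements are legitimate; these hinge exactly on the unimodularity of the adapted basis and of $s$, so that $(\mu+\nu)s = 0$ really forces $\mu + \nu = 0$. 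Conceptually, the whole argument turns on the single identity $\ad_\ell(m) = \ad_\ell(n) = s \in \Ann(A)$, which simultaneously explains the failure of semisimplicity and pins down the length-two Jordan block.
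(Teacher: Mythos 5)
Your proposal is correct and follows essentially the same route as the paper: the same eigenvectors obtained from \itemref{le:lx}{1} and \itemref{le:lx}{3}, the same key identity $\ell m = \ell n = s \in \Ann(A)$ (which the paper gets from \itemref{le:lm}{4} where you recompute it directly), and the same resulting decomposition. The only real difference is in the finish: where the paper concludes with a dimension count, you make that count explicit via a unimodular change of basis and an invariant-submodule argument, which is a slightly more careful way of justifying the same conclusion over an arbitrary ring $R$ with $2 = 0$.
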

\begin{proof}
    By \itemref{le:lx}{1} and \cref{le:ann}, we already know that
    \[ \langle a_1, a_2, a_3, s \rangle \leq A^\ell_0 . \]
    Next, we have $b_i \ell = b_i + \ell + n$ and $c_i \ell = c_i + \ell + m$ for each $i$ by \itemref{le:lx}{3}, so indeed $(b_i + b_j) \ell = b_i + b_j$ and $(c_i + c_j) \ell = c_i + c_j$ for all $i \neq j$, hence
    \[ \langle b_1 + b_2, b_1 + b_3, c_1 + c_2, c_1 + c_3 \rangle \leq A^\ell_1 . \]
    Finally, by \itemref{le:lm}{4}, we have
    \[ \ell m = \ell n = s \in A^\ell_0 , \]
    hence $m,n \in \widetilde A^\ell_0$. (Notice that $a_1 + a_2 + a_3 + m + n = s$.)
    By dimension count, the result follows.
\end{proof}

\begin{theorem}\label{th:aff-decalg}
    Let $\geom$ be the affine plane of order $3$. Then the nilpotent Matsuo algebra $A = M_R(\geom, 1)$ is a $9$-dimensional decomposition algebra with fusion law
	\begin{center}
		\renewcommand{\arraystretch}{1.2}
		\setlength{\tabcolsep}{0.75em}
		\begin{tabular}[b]{c|ccc}
			$*$	& $0$ & $1$ \\
			\hline
			$0$ & $\{ 0 \}$ & $\{ 1 \}$ \\
			$1$ & $\{ 1 \}$ & $\{ 0 \}$
		\end{tabular},
	\end{center}
    where for each of the $12$ lines $\ell \in \cL$, there is a decomposition $A = \widetilde A^\ell_0 \oplus A^\ell_1$ into generalized eigenspaces for $\ad_\ell$ with eigenvalues $0$ and $1$, respectively.
\end{theorem}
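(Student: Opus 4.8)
The starting point is \cref{pr:aff-decs}, which already supplies the direct sum $A = \widetilde A^\ell_0 \oplus A^\ell_1$ for the particular line $\ell = \{a_1,a_2,a_3\}$; what remains is to check the three fusion rules
\[ \widetilde A^\ell_0 \cdot \widetilde A^\ell_0 \subseteq \widetilde A^\ell_0, \qquad \widetilde A^\ell_0 \cdot A^\ell_1 \subseteq A^\ell_1, \qquad A^\ell_1 \cdot A^\ell_1 \subseteq \widetilde A^\ell_0 , \]
which are precisely the relations $0*0=\{0\}$, $0*1=1*0=\{1\}$ and $1*1=\{0\}$ of the stated $\Z/2\Z$-fusion law. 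Because the multiplication of $A$ is defined solely from the Fischer-space data, every automorphism $g$ of $\geom$ induces an algebra automorphism $\hat g$ with $\hat g(\ad_\ell v) = \ad_{g\ell}(\hat g v)$, so $\hat g$ carries the (generalized) eigenspaces of $\ad_\ell$ onto those of $\ad_{g\ell}$. Since $\Aut(\geom)$ acts transitively on the $12$ lines of the affine plane, it therefore suffices to verify the three inclusions for our fixed $\ell$; the decompositions attached to the remaining lines then follow by transport of structure.

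For the first inclusion I would run through the products of the spanning set $a_1,a_2,a_3,m,n$ of $\widetilde A^\ell_0$: a product $a_i a_j$ with $i\neq j$ is the line nilpotent $\ell \in \langle a_1,a_2,a_3\rangle$; the mixed products give $a_i m = a_i n = a_i + m + n$ by \itemref{le:lx}{3} (the line parallel to $m$ avoiding the $a_i$ being $n$, and conversely); and $mn = s = a_1+a_2+a_3+m+n$ by \itemref{le:lm}{4}. All of these lie in $\widetilde A^\ell_0$, so it is a subalgebra.

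The remaining two inclusions involve products of individual points, so here I would use the basic rule $p\cdot q = p+q+p\wedge q$ for collinear points, together with the fact that any two distinct points of a single line multiply to that line's nilpotent (\cref{rem:indep}). The bookkeeping is halved by the automorphism $\sigma$ of $\geom$ fixing $\ell$ pointwise and interchanging $m\leftrightarrow n$ (explicitly $b_i \leftrightarrow c_i$): since $\hat\sigma$ is an algebra automorphism preserving both summands, all products involving the $c$-vectors follow from those involving the $b$-vectors. For the second inclusion one computes $m\cdot(b_i+b_j)=0$ and $n\cdot(b_i+b_j)=b_i+b_j$ (both via \itemref{le:lx}{3}), while $a_k\cdot(b_i+b_j)\in A^\ell_1$ once the third points $a_k\wedge b_i \in n$ are tracked. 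For the third inclusion, $(b_i+b_j)(b_k+b_l)$ collapses to a sum of point-products on $m$, each equal to $m$, hence lands in $\langle m\rangle$, whereas a cross product $(b_i+b_j)(c_k+c_l)$ expands — using $b\wedge c \in \ell$ — into a combination of $a_1,a_2,a_3$.

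I expect the one genuinely instructive point — rather than an obstacle — to be the reason generalized eigenspaces are unavoidable: the degree-$1$ products land in the \emph{generalized} $0$-eigenspace but not the honest one. For instance $(b_1+b_2)(b_1+b_3)=m$, and $m\in\widetilde A^\ell_0$ yet $m\notin A^\ell_0=\langle a_1,a_2,a_3,s\rangle$, since $\ad_\ell m = s \neq 0$ while $\ad_\ell^2 m = 0$. Thus the only care required is to confirm that every $A^\ell_1\cdot A^\ell_1$ product falls into $\langle a_1,a_2,a_3,m,n\rangle$ — in particular that it may acquire nonzero $m$- and $n$-components — which is exactly what forces the use of $\widetilde A^\ell_0$ in place of $A^\ell_0$ and distinguishes this case from the complete quadrilateral of \cref{th:des-decalg}.
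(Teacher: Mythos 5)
Your proposal is correct and takes essentially the same route as the paper: starting from \cref{pr:aff-decs}, it verifies the three fusion inclusions on the spanning sets $a_1,a_2,a_3,m,n$ of $\widetilde A^\ell_0$ and $b_i+b_j,\,c_i+c_j$ of $A^\ell_1$ via \cref{le:lx,le:lm} (including the key observation that $(b_1+b_2)(b_1+b_3)=m$ forces generalized eigenspaces), with your symmetry reductions (line-transitivity of $\Aut(\geom)$ and the collineation $b_i\leftrightarrow c_i$) merely streamlining bookkeeping that the paper handles with ``and similarly''. One cosmetic correction: $m\cdot(b_i+b_j)=0$ follows from \itemref{le:lx}{1} (since $b_i,b_j\in m$), not from \itemref{le:lx}{3}.
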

\begin{proof}
    We keep the notations from \cref{pr:aff-decs}. It remains to show that $\widetilde A^\ell_0$ is a subalgebra of $A$, that $\widetilde A^\ell_0 \cdot A^\ell_1 \subseteq A^\ell_1$ and that $A^\ell_1 \cdot A^\ell_1 \subseteq \widetilde A^\ell_0$.
    
    We already know that $\langle a_1, a_2, a_3 \rangle$ is a subalgebra of $A$. By \itemref{le:lm}{4}, we have $mn = s \in \widetilde A^\ell_0$. Further, we have $a_i m = a_i n = a_i + m + n \in \widetilde A^\ell_0$ for each $i$ by \itemref{le:lx}{3}, so we conclude that $\widetilde A^\ell_0$ is indeed a subalgebra of $A$.

    Next, we have
    \[ a_1 (b_1 + b_2) = (b_1 + b_2) + (c_1 + c_3) \in A_1^\ell , \]
    and similarly for all other products of any $a_i$ with any basis element of $A_1^\ell$. Moreover, $m (b_1 + b_2) = m b_1 + m b_2 = 0$ by \itemref{le:lx}{1} and
    \[ m (c_1 + c_2) = m c_1 + m c_2 = (\ell + m + c_1) + (\ell + m + c_2) = c_1 + c_2 \in A_1^\ell \]
    by \itemref{le:lx}{3}. Similarly, $n (b_1 + b_2) \in A_1^\ell$ and $n (c_1 + c_2) = 0$, so we conclude that indeed $\widetilde A^\ell_0 \cdot A^\ell_1 \subseteq A^\ell_1$.
    
    Finally, we compute that
    \begin{align*}
        (b_1 + b_2)(b_1 + b_3) &= b_1 b_3 + b_1 b_2 + b_2 b_3 = m + m + m = m \in \tilde A_0^\ell \ \text{ and} \\
        (b_1 + b_2)(c_1 + c_2) &= b_1 c_1 + b_1 c_2 + b_2 c_1 + b_2 c_2 \\
        &= (a_1 + b_1 + c_1) + (a_3 + b_1 + c_2) + (a_3 + b_2 + c_1) + (a_2 + b_2 + c_2) \\
        &= a_1 + a_2 \in \tilde A_0^\ell
    \end{align*}
    (and similarly for the other products) so we conclude that also $A^\ell_1 \cdot A^\ell_1 \subseteq \widetilde A^\ell_0$.
\end{proof}

For the reduced nilpotent Matsuo algebra, however, we again have a decomposition into (proper) eigenspaces, i.e., the operators $\ad_\ell$ are semisimple.

\begin{corollary}
    Let $\geom$ be the affine plane of order $3$ and let $A = M_R(\geom, 1)$. Then the reduced nilpotent Matsuo algebra $A' = A / \Ann(A)$ is an $8$-dimensional decomposition algebra with fusion law
	\begin{center}
		\renewcommand{\arraystretch}{1.2}
		\setlength{\tabcolsep}{0.75em}
		\begin{tabular}[b]{c|ccc}
			$*$	& $0$ & $1$ \\
			\hline
			$0$ & $\{ 0 \}$ & $\{ 1 \}$ \\
			$1$ & $\{ 1 \}$ & $\{ 0 \}$
		\end{tabular},
	\end{center}
    where for each of the $12$ lines $\ell \in \cL$, there is a decomposition $A' = A'^\ell_0 \oplus A'^\ell_1$ into eigenspaces for $\ad_\ell$ with eigenvalues $0$ and $1$, respectively.
\end{corollary}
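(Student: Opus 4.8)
The plan is to derive the corollary directly from \cref{th:aff-decalg}, by pushing the generalized-eigenspace decomposition of $A$ through the quotient map $A \to A' = A/\Ann(A)$. First I would record that, by \cref{le:ann}, $\Ann(A) = \langle s\rangle$ is a one-dimensional ideal of $A$; hence $A'$ inherits a well-defined algebra structure, $\ad_\ell$ descends to a well-defined operator on $A'$, and $\dim_R A' = 9 - 1 = 8$. Since $s \in A^\ell_0 \subseteq \widetilde A^\ell_0$ for every line $\ell$ by \cref{pr:aff-decs}, the direct sum $A = \widetilde A^\ell_0 \oplus A^\ell_1$ descends to a direct sum $A' = A'^\ell_0 \oplus A'^\ell_1$, where $A'^\ell_0$ is the image of $\widetilde A^\ell_0$ and $A'^\ell_1$ the image of $A^\ell_1$. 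A quick check that $s \notin A^\ell_1$ shows that $A'^\ell_1 \cong A^\ell_1$ and that the two summands have dimensions $4$ and $4$, as required.

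The crucial step is to verify that these summands are genuine eigenspaces for $\ad_\ell$ on $A'$, i.e.\ that $\ad_\ell$ becomes semisimple after reduction. The elements of $A^\ell_1$ are already honest $1$-eigenvectors in $A$, so $A'^\ell_1$ is a true $1$-eigenspace. For the $0$-part, the only failure of semisimplicity in $A$ arises from the chains $\ad_\ell(m) = \ell m = s$ and $\ad_\ell(n) = \ell n = s$ (by \itemref{le:lm}{4}), together with $\ad_\ell(a_i) = \ell a_i = 0$ (by \itemref{le:lx}{1}); hence $\ad_\ell(\widetilde A^\ell_0) \subseteq \langle s\rangle = \Ann(A)$. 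Since this nilpotent part is exactly the subspace we quotient out, $\ad_\ell$ acts as the zero operator on $A'^\ell_0$, so $A'^\ell_0$ is a true $0$-eigenspace. This is the heart of the matter: the obstruction to semisimplicity of $\ad_\ell$ lives precisely inside $\Ann(A)$, and so disappears in $A'$.

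It then remains to transport the fusion law. The three containments established in the proof of \cref{th:aff-decalg}---namely that $\widetilde A^\ell_0$ is a subalgebra, that $\widetilde A^\ell_0 \cdot A^\ell_1 \subseteq A^\ell_1$, and that $A^\ell_1 \cdot A^\ell_1 \subseteq \widetilde A^\ell_0$---pass to the quotient unchanged and yield exactly the stated $\Z/2\Z$-graded fusion law on $A'$, with the entry $1 * 1 = \{0\}$ coming from the last containment. The whole argument is symmetric in $\ell$, so it applies verbatim to each of the $12$ lines. I do not expect any serious obstacle beyond the semisimplicity observation above, which is essentially already contained in the computations of \cref{pr:aff-decs}; the remaining verifications are purely formal consequences of the preceding theorem.
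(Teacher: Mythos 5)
Your proposal is correct and follows essentially the same route as the paper: both deduce the statement from \cref{th:aff-decalg} and observe that the failure of semisimplicity of $\ad_\ell$ — namely $\ell m = \ell n = s$ — lies inside $\Ann(A) = \langle s\rangle$, so that after passing to $A' = A/\langle s\rangle$ the generalized $0$-eigenspace becomes a genuine $0$-eigenspace. Your write-up just spells out more explicitly the dimension count and the transport of the fusion-law containments, which the paper leaves implicit.
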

\begin{proof}
    By \cref{th:aff-decalg}, we have, for each $\ell \in \cL$, a decomposition into generalized eigenspaces $A' = \widetilde A'^\ell_0 \oplus A'^\ell_1$.
    However, since $s = 0$ in $A'$, we have $\ell m = \ell n = 0$, so that $m$ and $n$ are eigenvectors with eigenvalue $0$ in $A'$ and hence $\widetilde A'^\ell_0 = A'^\ell_0$.
\end{proof}

\subsection{Fischer spaces of symplectic type}

We can now move on to Fischer spaces of higher rank.
We first deal with Fischer spaces of symplectic type (see \cref{def:symplectic}).
Let $\geom$ be a Fischer space of symplectic type and let $\ell$ be any line of~$\geom$.
Then any point of $p \not\in \ell$ is collinear with either $0$ or $2$ points of $\ell$.
(Indeed, as soon as a point $p$ is collinear with one point of $\ell$, the subspace spanned by $p$ and $\ell$ must be a complete quadrilateral.)

If $\pi \subset \cP$ is the set of points of a complete quadrilateral inside $\geom$, then we will denote the sum of the points of $\pi$ in the algebra $A$ also by $\pi$.

\begin{proposition}\label{pr:symplectic-dec}
    Let $\geom$ be a Fischer space of symplectic type and let $\ell = \{ a,b,c \}$ be any line of $\geom$.
    Let $\cP_0$ be the set of points of $\geom$ not on $\ell$ and not collinear with $a$, $b$ nor $c$ and let $\cP_2$ be the set of points collinear with exactly $2$ points of $\ell$.
    
    Then we have a decomposition $A = A^\ell_0 \oplus A^\ell_1$ into eigenspaces for the operator $\ad_\ell$ with eigenvalues $0$ and $1$, respectively, where
    \begin{align*}
        A^\ell_0 &= \langle a, b, c\rangle \oplus \langle \pi \mid \pi \text{ a complete quadrilateral with } \ell \subset \pi \rangle \oplus \langle w \mid w \in \cP_0 \rangle , \\
        A^\ell_1 &= \langle \ell z \mid z \in \cP_2 \rangle .
    \end{align*}
\end{proposition}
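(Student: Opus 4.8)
The plan is to exploit \cref{le:lla=la}, which for a Fischer space of symplectic type says that $\ad_\ell$ is an \emph{idempotent} operator, $\ad_\ell^2 = \ad_\ell$. An idempotent endomorphism of an $R$-module splits it as the direct sum of its kernel and its image, acting as the identity on the latter; we therefore obtain \emph{for free} the eigenspace decomposition $A = \ker(\ad_\ell) \oplus \im(\ad_\ell)$ with eigenvalues $0$ and $1$, so that $A^\ell_0 = \ker(\ad_\ell)$ and $A^\ell_1 = \im(\ad_\ell)$. The content of the proposition is then solely to match these two summands with the explicit spans in the statement; existence and directness of the decomposition require no further argument.

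First I would identify $A^\ell_1 = \im(\ad_\ell)$, which is spanned by the vectors $\ell x$ as $x$ ranges over the basis $\cP$. Running through the three point types: if $x \in \ell$ or $x \in \cP_0$ (so $x$ is collinear with no point of $\ell$) then $\ell x = 0$ by \itemref{le:lx}{1}; if $x \in \cP_2$ then the symplectic hypothesis makes $\langle x, \ell\rangle$ a complete quadrilateral, whence $\ell x = m + n \neq 0$ by \itemref{le:lx}{2}. Thus $A^\ell_1 = \langle \ell z \mid z \in \cP_2\rangle$ exactly, as claimed. Next I would verify that the three families generating the proposed $A^\ell_0$ lie in $\ker(\ad_\ell)$. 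For $a,b,c$ and for each $w \in \cP_0$ this is \itemref{le:lx}{1}. For a complete quadrilateral $\pi$ with $\ell \subset \pi$, the six points of $\pi$ span a subalgebra $B \leq A$ isomorphic to the nilpotent Matsuo algebra of the complete quadrilateral, inside which $\pi$ is the sum of all points and hence lies in $\Ann(B)$ by \cref{le:ann}; since $\ell \in B$, this yields $\ell\pi = 0$. Consequently all three families are contained in $A^\ell_0$.

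It remains to promote the containment for $A^\ell_0$ to an equality (the description of $A^\ell_1$ being already exact), and this is where the real bookkeeping --- and the main obstacle --- lies. The key combinatorial fact is that the complete quadrilaterals through $\ell$ partition $\cP_2$ into triples: each $z \in \cP_2$ generates with $\ell$ a unique such quadrilateral, whose three points off $\ell$ all belong to $\cP_2$. Fixing one such quadrilateral with off-$\ell$ points $\{x,y,z\}$, the proposed $A^\ell_0$ contributes $a,b,c$ and $\pi$, hence $x+y+z$, while $A^\ell_1$ contributes two independent vectors among $\ell x, \ell y, \ell z$ (subject to the single relation $\ell x + \ell y + \ell z = \ell\pi = 0$); combining the known $x+y+z$ with two of these products and with $a,b,c$ recovers $x$, $y$ and $z$ individually. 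Hence the proposed $A^\ell_0$ and $A^\ell_1$ together span $A$; since the proposed $A^\ell_0$ is contained in $\ker(\ad_\ell)$ while $A = \ker(\ad_\ell)\oplus A^\ell_1$, elementary linear algebra forces it to equal $\ker(\ad_\ell)$. The asserted three-fold direct sum for $A^\ell_0$ then follows because, modulo $\langle a,b,c\rangle$, the $\pi$ are supported on the disjoint $\cP_2$-triples and the $w\in\cP_0$ on $\cP_0$; the resulting dimensions $\dim A^\ell_0 = 3 + k + |\cP_0|$ and $\dim A^\ell_1 = 2k$ (with $k$ the number of quadrilaterals through $\ell$ and $|\cP_2| = 3k$) add up to $\dim A$, a welcome consistency check.
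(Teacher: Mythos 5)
Your proof is correct and takes essentially the same route as the paper's: both rest on the idempotency of $\ad_\ell$ from \cref{le:lla=la} to split $A$ into the $0$- and $1$-eigenspaces, then verify the stated containments and finish with a spanning/counting argument over the basis $\cP$. The only cosmetic difference is that you identify $A^\ell_1$ as $\im(\ad_\ell)$ and re-derive the complete-quadrilateral computations from scratch, where the paper simply cites \cref{pr:des-decs}.
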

\begin{proof}
    Since $\ad_\ell$ is idempotent by \cref{le:lla=la}, it is semisimple with eigenvalues $0$ and $1$ only, so we have a decomposition $A = A^\ell_0 \oplus A^\ell_1$ into eigenspaces.
    Of course, $\langle a, b, c\rangle \oplus \langle w \mid w \in \cP_0 \rangle \leq A^\ell_0$.
    Moreover, by \cref{pr:des-decs}, we already know that for each complete quadrilateral $\pi$ containing $\ell$, we have
    \begin{align*}
        & \langle \pi \rangle \leq A^\ell_0 \quad \text{and} \\
        & \langle \ell z \mid z \in \pi \rangle \leq A^\ell_1 .
    \end{align*}
    For each choice of $\pi$, this gives us a $1$\dash dimensional contribution to $A_0^\ell$ and a $2$\dash dimensional contribution to $A_1^\ell$. Since $\cP$ is a basis for $A$, these contributions all intersect trivially and together with $\langle a, b, c\rangle$ and $\langle w \mid w \in \cP_0 \rangle$, they span all of $A$.
    The result follows.
\end{proof}
We will show in \cref{th:symplectic} below that these decompositions will again give rise to a $\Z/2\Z$-graded decomposition algebra. We need a number of auxiliary results first.
To begin, we need to understand the possible interactions between two different complete quadrilaterals through the line $\ell$.
\begin{proposition}\label{pr:2CQ}
    Let $\geom$ be a Fischer space of symplectic type and let $\ell = \{ a,b,c \}$ be a line of $\geom$.
    Suppose that $\ell$ is contained in two different complete quadrilaterals $\pi$ and $\pi'$, labeled as follows:
    \[
    \begin{tikzpicture}[scale=.7]
        \DATwoDrawing{$a$}{$b$}{$c$}{$x$}{$y$}{$z$}
    \end{tikzpicture}
    \qquad
    \begin{tikzpicture}[scale=.7]
        \DATwoDrawing{$a$}{$b$}{$c$}{$p$}{$q$}{$r$}
    \end{tikzpicture}
    \]
    Then precisely two cases can occur:
    \begin{enumerate}[\rm (a)]
        \item\label{pr:2CQ:a} Either
        \begin{alignat*}{3}
            &p \sim x, &\quad & p \nsim y, &\quad & p \nsim z \\
            &q \nsim x, && q \sim y, && q \nsim z \\
            &r \nsim x, && r \nsim y, && r \sim z.
        \end{alignat*}
        In this case, $w := p \wedge x = q \wedge y = r \wedge z$. Moreover, $w \in \cP_0$, i.e., $w \nsim a$, $w \nsim b$ and $w \nsim c$.
        \item\label{pr:2CQ:b} Or
        \begin{alignat*}{3}
            &p \nsim x, &\quad & p \sim y, &\quad & p \sim z \\
            &q \sim x, && q \nsim y, && q \sim z \\
            &r \sim x, && r \sim y, && r \nsim z.
        \end{alignat*}
        In this case, we have
        \[ d := r \wedge y = q \wedge z, \quad e := r \wedge x = p \wedge z, \quad f := q \wedge x = p \wedge y . \]
        The points $a,b,c,d,e,f$ form another complete quadrilateral $\pi''$ through $\ell$, so in particular, we have lines $\{ a,e,f \}$, $\{ b,d,f \}$ and $\{ c,d,e \}$.
    \end{enumerate}
\end{proposition}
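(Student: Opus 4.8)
The plan is to determine all nine potential collinearities between $\{x,y,z\}$ and $\{p,q,r\}$ and to show that they are governed by only two binary parameters, one of which is forced to be the negation of the other. First I would record how each point off $\ell$ is characterised by the two points of $\ell$ that it meets: in $\pi$ the point $x$ is collinear with $b,c$ but not $a$, while $y$ misses $b$ and $z$ misses $c$, and likewise $p,q,r$ in $\pi'$. The point-reflections $\sigma_a,\sigma_b,\sigma_c$ associated with the three involutions on $\ell$ (each $\sigma_t$ fixes every point not collinear with $t$ and interchanges the two further points on each line through $t$) are automorphisms of $\geom$: concretely $\sigma_a$ fixes $a,x,p$ and swaps $b\leftrightarrow c$, $y\leftrightarrow z$, $q\leftrightarrow r$, and similarly for $\sigma_b,\sigma_c$. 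Since these maps preserve collinearity, applying all three collapses the nine relations to just two, namely a common value $\beta$ for the three ``matched'' relations $x\sim p$, $y\sim q$, $z\sim r$, and a common value $\alpha$ for the six ``crossed'' relations.

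The heart of the argument is then to prove that $(\alpha,\beta)\in\{(0,1),(1,0)\}$, and for this I would analyse the single rank-$3$ subspace $\langle b,x,p\rangle$. Since $b\sim x$ and $b\sim p$ while $x\wedge b=z\neq p$, the triple is not collinear, so by \cref{th:rank3} together with symplectic type (\cref{def:symplectic}) this subspace is a complete quadrilateral. If $p\sim x$, then $b,x,p$ is a triangle whose non-collinear (``opposite'') pairs include $\{x,\,b\wedge p\}=\{x,r\}$, so $x\nsim r$ and hence $\alpha=0$, landing in case~(a). If instead $p\nsim x$, then $\{p,x\}$ is a non-collinear pair with common neighbour $b$; a direct inspection of the (essentially unique) complete quadrilateral shows that for such a pair one always has $b\wedge p\sim x$, i.e.\ $r\sim x$ and $\alpha=1$, landing in case~(b). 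This simultaneously rules out the two uniform patterns $\alpha=\beta$.

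It then remains to read off the stated incidence structure in each case, which is routine bookkeeping inside suitable complete quadrilaterals. In case~(a), the quadrilaterals $\langle b,x,p\rangle$ and $\langle c,x,p\rangle$ have fourth lines $\{z,r,w\}$ and $\{y,q,w\}$ with $w:=x\wedge p$, yielding $w=p\wedge x=q\wedge y=r\wedge z$; the non-collinear pairs $\{b,w\}$ and $\{c,w\}$ give $w\nsim b,c$, and since $\sigma_a$ fixes $x$ and $p$ it fixes $w$, forcing $w\nsim a$, so that $w\in\cP_0$. In case~(b), the three triangles $\langle z,p,q\rangle$, $\langle y,p,r\rangle$, $\langle x,q,r\rangle$ (pairwise collinear in this case, with apex products $c=p\wedge q$, $b=p\wedge r$, $a=q\wedge r$) have fourth lines $\{c,d,e\}$, $\{b,d,f\}$, $\{a,e,f\}$; reading off their feet produces the well-defined points $d=q\wedge z=r\wedge y$, $e=p\wedge z=r\wedge x$, $f=p\wedge y=q\wedge x$, and these three lines together with $\ell$ exhibit $\{a,b,c,d,e,f\}$ as a complete quadrilateral $\pi''$.

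The main obstacle is the dichotomy of the middle step: everything hinges on recognising the auxiliary triples as complete quadrilaterals and on the behaviour of their non-collinear pairs, which is exactly where symplectic type enters. Once the collinearity pattern is pinned down the remaining assertions are immediate, though one must still keep track of a few non-degeneracy points, namely that the auxiliary triples are genuinely non-collinear (so that the subspaces really have rank $3$) and that $d,e,f$ are new points distinct from those of $\pi$ and $\pi'$.
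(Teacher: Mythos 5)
Your first half is correct and takes a genuinely different route from the paper. The paper propagates the $0$\nobreakdash-$2$ property directly across the nine pairs ($p\sim x$ forces $p\nsim z$ via the line $\{b,x,z\}$, etc.), whereas you first collapse the nine relations to two parameters $\alpha,\beta$ using the point-reflections $\sigma_a,\sigma_b,\sigma_c$ (these are indeed automorphisms: they are conjugation by the corresponding transpositions, acting as $e\mapsto ded$ on the Fischer space; a citation would be appropriate, but the fact is standard) and then settle the dichotomy inside the single quadrilateral $\langle b,x,p\rangle$, using that the opposite of a vertex of a triangle is the wedge of the other two vertices. This is slicker than the paper's case-by-case propagation, and your case (a) bookkeeping is also sound: the two quadrilaterals $\langle b,x,p\rangle$ and $\langle c,x,p\rangle$ both contain the point $w=x\wedge p$, so their fourth lines $\{z,r,w\}$ and $\{y,q,w\}$ really do yield $w=p\wedge x=r\wedge z=q\wedge y$, and $w\in\cP_0$ follows as you say.

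There is, however, a genuine gap in case (b). The three quadrilaterals generated by your triangles $\{z,p,q\}$, $\{y,p,r\}$, $\{x,q,r\}$ have fourth lines $\{c,\,z\wedge q,\,z\wedge p\}$, $\{b,\,y\wedge r,\,y\wedge p\}$ and $\{a,\,x\wedge q,\,x\wedge r\}$ respectively, but these three quadrilaterals pairwise share only a point of $\{p,q,r\}$, so nothing in this configuration identifies $z\wedge q$ with $y\wedge r$ (nor $z\wedge p$ with $x\wedge r$, nor $y\wedge p$ with $x\wedge q$). The ``well-definedness'' of $d,e,f$ is exactly the content of the identities $r\wedge y=q\wedge z$, $r\wedge x=p\wedge z$, $q\wedge x=p\wedge y$ in the statement, and in your write-up it is asserted rather than derived; the analogy with case (a) breaks down because there the identification came for free from the shared point $w$. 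The repair is the paper's argument: consider the quadrilateral spanned by the two lines $\{a,y,z\}$ and $\{a,q,r\}$ — in case (b) its sixth point lies on the lines $\{y,r,\cdot\}$ and $\{z,q,\cdot\}$, so it equals both $y\wedge r$ and $z\wedge q$, which defines $d$; cyclically, the quadrilaterals spanned by $\{b,x,z\},\{b,p,r\}$ and by $\{c,x,y\},\{c,p,q\}$ define $e$ and $f$. With these identities in hand, your three triangles do produce the lines $\{c,d,e\}$, $\{b,d,f\}$, $\{a,e,f\}$ and hence the quadrilateral $\pi''$, so the gap is localized and easily closed, but as written the step fails.
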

\begin{proof}
    We repeatedly use the fact that for any point and any line, there are either $0$ or $2$ points on that line collinear with that point.
    \begin{enumerate}[\rm (a)]
        \item 
        Assume first that $p \sim x$. Since also $p \sim b$, we must have $p \nsim z$. Similarly, since $p \sim c$, we have $p \nsim y$. On the other hand, we now have $y \sim c$ and $y \nsim p$, hence $y \sim q$. Continuing in this fashion, we find all $9$ relations between $p,q,r$ and $x,y,z$ as stated.
        
        Next, we look at the subspace spanned by the lines $\{ c, y, x \}$ and $\{ c, q, p \}$. This, too, is a complete quadrilateral, and since we already have $p \sim x$ and $q \sim y$, we must have $w := p \wedge x = q \wedge y$. Similarly, $w = r \wedge z$.
        
        Finally, we note that $w \sim x$ and $w \sim y$ imply $w \nsim c$, and similarly $w \nsim a$ and $w \nsim b$, so $w \in \cP_0$.
        \item 
        Assume now that $p \nsim x$. Since $p \sim b$, this implies $p \sim z$, and since $p \sim c$, this implies $p \sim y$. Now $y \sim p$ and $y \sim c$ imply $y \nsim q$. Again, we continue in this fashion to find all $9$ relations between $p,q,r$ and $x,y,z$ as stated.

        Next, we look at the subspace spanned by the lines $\{ a, z, y\}$ and $\{ a, r, q \}$. Again using the fact that this is a complete quadrilateral, the known collinearities between $y,z$ and $q,r$ tell us that $r \wedge y = q \wedge z$. Similarly, $r \wedge x = p \wedge z$ and $q \wedge x = p \wedge y$.
        
        Finally, we look at the subspace spanned by the line $\{ a, r, q \}$ and the point~$x$. Since $x \sim r$ and $x \sim q$, this is again a complete quadrilateral, containing $e = r \wedge x$ and $f = q \wedge x$. This forces $\{ a, e, f \}$ to be a line. Similarly, we get lines $\{ b, d, f \}$ and $\{ c, d, e \}$. It is now clear that the points $a,b,c,d,e,f$ form another complete quadrilateral.
        \qedhere
    \end{enumerate}
\end{proof}
\begin{remark}
    In principle, \cref{pr:2CQ} is nothing more than a geometric proof of the classification of symplectic Fischer spaces of rank $4$, which is, of course, a well known result (see also \cref{th:classif} below). The situation of \cref{pr:2CQ}\cref{pr:2CQ:a} corresponds to the Fischer space of $W(A_4)$, while the situation \cref{pr:2CQ:b} corresponds to the Fischer space of $W(D_4)$.
    However, our geometric version will be useful in the proof of \cref{th:symplectic} below.
\end{remark}

We also have a converse for this result for points in $\cP_0$.
\begin{proposition}\label{pr:converse P0}
    Let $\geom$ be a Fischer space of symplectic type, let $\ell = \{ a,b,c \}$ be a line of $\geom$ and let $\cP_0$ be the set of points not collinear to any of $a,b,c$.
    If $w \in \cP_0$, then there exist two complete quadrilaterals containing $\ell$ such that $w$ is as in \itemref{pr:2CQ}{a}.
\end{proposition}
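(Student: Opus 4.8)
The plan is to build both complete quadrilaterals from a single auxiliary point. First I would locate a point $y \in \cP$ that is collinear with $w$ and with some point of $\ell$; since $\geom$ is of symplectic type, $y$ is then collinear with exactly two points of $\ell$, and after relabelling $b$ and $c$ I may assume $y \sim a$, $y \sim c$ and $y \nsim b$. Because two intersecting lines of a symplectic Fischer space span a complete quadrilateral (\cref{th:rank3}, \cref{def:symplectic}), the span $\langle \ell, y \rangle$ is a complete quadrilateral $\pi = \{a,b,c,x,y,z\}$ with $z = a \wedge y$ and $x = c \wedge y$, labelled exactly as in the left-hand figure of \cref{pr:2CQ}.

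Next I would observe that $w$ is automatically collinear with all three of $x$, $y$, $z$. On the line $\{a,y,z\}$ we have $w \nsim a$ (as $w \in \cP_0$) but $w \sim y$; by the symplectic-type property that every off-line point meets a line in $0$ or $2$ points, $w$ must therefore also be collinear with the third point $z$. The same argument applied to the line $\{c,x,y\}$, on which $w \nsim c$ and $w \sim y$, yields $w \sim x$. Hence $w \sim x$, $w \sim y$, $w \sim z$, while $w \nsim a,b,c$.

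To produce the second quadrilateral I would reflect $\pi$ in $w$. Viewing $w$ as an involution of the underlying $3$-transposition group, $w$ commutes with $a,b,c$ (none of them is collinear with $w$), so conjugation by $w$ is an automorphism $\sigma_w$ of $\geom$ fixing $a$, $b$ and $c$; moreover, for any point $t$ with $w \sim t$ one has $\sigma_w(t) = w \wedge t$, the third point of the line $\{w, t, w \wedge t\}$. Setting $p = w \wedge x$, $q = w \wedge y$, $r = w \wedge z$, the image $\pi' = \sigma_w(\pi) = \{a,b,c,p,q,r\}$ is again a complete quadrilateral through $\ell$. As $\sigma_w$ fixes $a,b,c$, the point $p$ has the same incidences with $\ell$ as $x$, so $\pi'$ is labelled as in \cref{pr:2CQ}; and $\pi' \neq \pi$, since $p \notin \pi$ (we have $p \neq x$, and $p \in \{y,z\}$ would force $w \in \{b,c\}$). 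Finally, $p$ and $x$ lie on the common line $\{w,x,p\}$, so $p \sim x$; this is the incidence that distinguishes case \itemref{pr:2CQ}{a} from case (b), and the common third point $w = p \wedge x = q \wedge y = r \wedge z$ is exactly the point required in \itemref{pr:2CQ}{a}.

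The only real obstacle is the first step: the existence of a point $y$ collinear with both $w$ and $\ell$, i.e. that $w$ lies at distance $2$ from $\ell$ in the collinearity graph. This is not a consequence of connectedness alone and genuinely uses the structure of symplectic Fischer spaces. I would resolve it via their classification (\cref{th:classif}) and the standard model in which the points are the nonzero vectors of an $\F_2$-space equipped with a nondegenerate alternating form $B$, collinearity being $u \sim v \Leftrightarrow B(u,v) = 1$ and $u \wedge v = u + v$. There a suitable $y$ is any solution of $B(y,a) = 1$ and $B(y,w) = 1$, and such a solution exists because $a$ and $w$ are linearly independent (indeed $w \notin \langle a,b,c\rangle$). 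With $y$ in hand, every remaining verification above is routine.
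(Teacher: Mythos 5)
The heart of your construction is sound, but the step you yourself flag as ``the only real obstacle'' --- the existence of a point $y$ collinear with $w$ and with some point of $\ell$ --- is precisely where your proof has a genuine gap. You propose to obtain it from \cref{th:classif} together with a ``standard model'' in which the points of a symplectic-type Fischer space are \emph{all} nonzero vectors of an $\F_2$-space with a nondegenerate alternating form. Neither half of this works. First, \cref{th:classif} classifies only Fischer spaces of rank $4$, whereas symplectic-type Fischer spaces occur in every rank, so it cannot be invoked for the general statement. Second, the proposed model describes only the Fischer spaces of the full symplectic groups $\mathrm{Sp}_{2n}(2)$, which have $2^{2n}-1$ points; the Fischer spaces of $W(A_4)$ ($10$ points) and $W(D_4)$ ($12$ points), both of symplectic type by \cref{th:classif}, are already not of this form. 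A general symplectic-type Fischer space is at best a proper subspace of such a symplectic geometry, and then a vector $y$ solving $B(y,a)=B(y,w)=1$ need not be a \emph{point} of $\geom$ at all, so the existence argument collapses. The fact you need is true, but for a different reason: connected Fischer spaces (of any type, not just symplectic) have diameter at most $2$ in the collinearity graph, by Hall's result \cite{Hal93}*{(2.6)}; this citation, together with the connectedness guaranteed by \cref{rem:connected}, is exactly how the paper fills this step.

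The remainder of your argument is correct and is genuinely different from the paper's. Your $0$\dash $2$\dash property argument showing $w \sim x, y, z$ matches the paper's. After that, the paper builds the second quadrilateral by several further applications of the $0$\dash $2$\dash property, whereas you obtain it in one stroke as the image $\sigma_w(\pi)$ under the point reflection induced by conjugation by the involution $w$ (legitimate here, since the paper's Fischer spaces come from $3$-transposition groups); your verifications that $\pi' \neq \pi$, that the labelling matches, and that $p \sim x$ with $p \wedge x = q \wedge y = r \wedge z = w$ forces case \itemref{pr:2CQ}{a}, are all correct and arguably cleaner. If you replace your final step by the citation of \cite{Hal93}*{(2.6)}, your proof goes through.
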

\begin{proof}
    First, we recall that connected Fischer spaces have diameter at most $2$ by \cite{Hal93}*{(2.6)}, so for any point $w \in \cP_0$, there exists a complete quadrilateral $\pi$ labeled as in \cref{fig:CQ} such that $w \sim x$. Since $w \nsim b$ and $w \nsim c$, we must have $w \sim z$ and $w \sim y$ as well (again using the $0$-$2$-property).
    Now define $p := w \wedge x$, $q := w \wedge y$ and $r := w \wedge z$.
    Once again using the $0$-$2$-property several times, it follows that $\ell \cup \{ p,q,r \}$ forms a second complete quadrilateral $\pi'$.
    Now $\pi$, $\pi'$ and $w$ are indeed as in \itemref{pr:2CQ}{a}, as claimed.
\end{proof}

The points in $\cP_0$ form a disconnected union of subspaces of $\geom$, even if $\geom$ is not of symplectic type.
\begin{proposition}\label{pr:P0 subspace}
    Let $\geom$ be any Fischer space, let $\ell = \{ a,b,c \}$ be a line of $\geom$ and let $\cP_0$ be the set of points not collinear to any of $a,b,c$.
    If $v,w \in \cP_0$ with $v \sim w$, then also $v \wedge w \in \cP_0$.
\end{proposition}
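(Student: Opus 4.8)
The plan is to argue by contradiction. Write $u := v \wedge w$, so that $\{v,w,u\}$ is a line of $\geom$, and suppose that $u$ is collinear with one of the three points $a,b,c$; by the symmetry of the three points it suffices to rule out $u \sim a$. First I would record that $u \neq a$: otherwise $a$ would lie on the line through $v$ and $w$, forcing $a \sim v$ and contradicting $v \in \cP_0$. Since also $a \neq v$ and $a \neq w$ (both lie in $\cP_0$, hence are distinct from $a$), the point $a$ lies off the line $\{v,w,u\}$.

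The key move is to apply \cref{th:rank3} to the line $\{v,w,u\}$ together with the line through $a$ and $u$ (which exists because we are assuming $a \sim u$). These two lines are distinct and meet in the single point $u$, so the subspace $\geomH$ they generate is isomorphic either to the complete quadrilateral or to the affine plane of order $3$. In both of these rank\dash$3$ models I would then invoke the elementary collinearity pattern between a point and a line not passing through it: in the complete quadrilateral every external point is collinear with exactly two of the three points of any line, while in the affine plane any two distinct points are collinear (it is a linear space). Applying this to the point $a$ and the line $\{v,w,u\}$ inside $\geomH$, and using $a \sim u$, I conclude in either case that $a$ must also be collinear with $v$ or with $w$.

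This contradicts $v, w \in \cP_0$, so in fact $u \nsim a$; running the same argument with $b$ and with $c$ in place of $a$ yields $u \nsim b$ and $u \nsim c$, whence $u = v \wedge w \in \cP_0$, as claimed. I do not expect a genuine obstacle here: the whole content is the reduction, via \cref{th:rank3}, of a statement about an arbitrary Fischer space to the two rank\dash$3$ models, after which the conclusion is immediate from the local geometry. The only point needing a little care is the bookkeeping that $a$ genuinely lies off the line $\{v,w,u\}$, so that the point/line collinearity dichotomy applies; this is exactly the observation $u \neq a$ made at the outset.
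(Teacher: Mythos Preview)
Your proof is correct and follows essentially the same route as the paper. The paper formulates the key observation as a general ``$0$-$2$-$3$-property'' (a point cannot be collinear with exactly one point of a line) and then applies it directly to $a$ and the line $\{u,v,w\}$; you derive that same property on the spot by invoking \cref{th:rank3} and inspecting the two rank-$3$ models, which amounts to the same argument unpacked.
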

\begin{proof}
    In any Fischer space, we have the $0$-$2$-$3$-property, i.e., given a point and a line of $\geom$, it is impossible to have exactly one point of that line collinear with that point. (Indeed, in a complete quadrilateral, we have the $0$-$2$-property, and in an affine plane, all $9$ points are collinear with each other.)
    
    Now assume that $v,w \in \cP_0$ with $v \sim w$ but with $u := v \wedge w \not\in \cP_0$. Then without loss of generality, we have $u \sim a$, but then $u$ is the only point of the line $\{ u,v,w \}$ collinear with $a$, a contradiction.
\end{proof}

We are now well prepared to show that our algebras are indeed decomposition algebras.
Unavoidably, the proof will require several case distinctions.
\begin{theorem}\label{th:symplectic}
    Let $\geom$ be a Fischer space of symplectic type. Then the nilpotent Matsuo algebra $A = M_R(\geom, 1)$ is a decomposition algebra with fusion law
	\begin{center}
		\renewcommand{\arraystretch}{1.2}
		\setlength{\tabcolsep}{0.75em}
		\begin{tabular}[b]{c|ccc}
			$*$	& $0$ & $1$ \\
			\hline
			$0$ & $\{ 0 \}$ & $\{ 1 \}$ \\
			$1$ & $\{ 1 \}$ & $\{ 0 \}$
		\end{tabular},
	\end{center}
    where for each line $\ell \in \cL$, there is a decomposition $A = A^\ell_0 \oplus A^\ell_1$ into eigenspaces for $\ad_\ell$ with eigenvalues $0$ and $1$, respectively.
\end{theorem}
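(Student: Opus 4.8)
The decomposition $A = A^\ell_0 \oplus A^\ell_1$ into genuine eigenspaces for $\ad_\ell$ (eigenvalues $0$ and $1$) is already furnished by \cref{pr:symplectic-dec}, so the only thing left to establish is that the three fusion inclusions
\[ A^\ell_0 A^\ell_0 \subseteq A^\ell_0, \qquad A^\ell_0 A^\ell_1 \subseteq A^\ell_1, \qquad A^\ell_1 A^\ell_1 \subseteq A^\ell_0 \]
hold. My plan is to first replace these three inclusions by a single, more symmetric statement. Write $T := \ad_\ell$; by \cref{le:lla=la} the operator $T$ is idempotent, so $A^\ell_0 = \ker T$ and $A^\ell_1 = \operatorname{Fix} T$. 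Decomposing arbitrary $u = u_0 + u_1$ and $v = v_0 + v_1$ along these eigenspaces, a direct check shows that, \emph{because $2 = 0$ in $R$}, the three fusion inclusions hold simultaneously if and only if $T$ is a derivation:
\[ \ell \cdot (u v) = (\ell \cdot u)\, v + u\, (\ell \cdot v) \quad \text{for all } u, v \in A . \]
The crucial point is the rule $1 * 1 = 0$: for $u, v \in A^\ell_1$ the derivation identity yields $T(uv) = uv + uv = 0$, forcing $uv \in A^\ell_0$, while the inclusions $A^\ell_0 A^\ell_0 \subseteq A^\ell_0$ and $A^\ell_0 A^\ell_1 \subseteq A^\ell_1$ follow equally immediately. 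Since both sides of the derivation identity are bilinear in $(u,v)$, it then suffices to verify it for basis points $u, v \in \cP$.

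Having made this reduction, I would check the identity $\ell(uv) = (\ell u)v + u(\ell v)$ for $u,v \in \cP$ by a case analysis governed by the position of $u$ and $v$ relative to $\ell$. Up to the symmetry $u \leftrightarrow v$, each of $u,v$ lies on $\ell$, in $\cP_2$, or in $\cP_0$ (in the notation of \cref{pr:symplectic-dec}), and the products $\ell u$, $\ell v$, $uv$ are read off from \cref{le:lx} and \cref{le:lm}. The cases where $u$ or $v$ lies on $\ell$, where $u \not\sim v$, or where $u \in \cP_0$ all collapse quickly (for the $\cP_0$ cases one uses \cref{pr:P0 subspace} to see that $\cP_0$ is closed under $\wedge$, and \cref{pr:converse P0} to locate the two quadrilaterals attached to a point of $\cP_0$). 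When $u,v \in \cP_2$ lie in the \emph{same} complete quadrilateral through $\ell$, all the relevant products stay inside that rank-$3$ subspace, so the identity follows from the complete-quadrilateral computation already carried out for \cref{th:des-decalg}.

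The genuine work — and the main obstacle I expect — is the case where $u$ and $v$ both lie in $\cP_2$ but in two \emph{different} complete quadrilaterals $\pi, \pi'$ through $\ell$. Here the collinearity pattern between the six off-$\ell$ points is exactly what \cref{pr:2CQ} classifies, splitting into the $W(A_4)$-configuration \itemref{pr:2CQ}{a} and the $W(D_4)$-configuration \itemref{pr:2CQ}{b}. In each sub-case one expands $\ell(uv)$, $(\ell u)v$ and $u(\ell v)$ using the explicit third points supplied by \cref{pr:2CQ}, and verifies that the three expressions agree; the characteristic-$2$ cancellations are precisely what make the two sides coincide. Finally, \cref{th:rank3} guarantees that the subspace generated by $\ell$, $u$ and $v$ is assembled from complete quadrilaterals only (symplectic type), so no further configurations can arise, and these finitely many local computations exhaust all cases and complete the proof.
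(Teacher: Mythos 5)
Your proposal is correct, and it reaches the theorem by a genuinely different route than the paper's proof. The paper works directly with the eigenspaces: it computes all pairwise products of the spanning elements of $A^\ell_0$ and $A^\ell_1$ given by \cref{pr:symplectic-dec} (the points of $\ell$, the sums $\pi$ over complete quadrilaterals through $\ell$, the points of $\cP_0$, and the elements $\ell z$ with $z \in \cP_2$), using \cref{pr:2CQ}, \cref{pr:converse P0} and \cref{pr:P0 subspace} to control the geometry. Your key device --- the equivalence, valid because $2 = 0$ and $\ad_\ell$ is idempotent by \cref{le:lla=la}, between the three fusion inclusions and the single statement that $\ad_\ell$ is a derivation --- does not appear in the paper, and it is correct in both directions: writing $u = u_0 + u_1$ and $v = v_0 + v_1$ along the eigenspaces, the fusion law gives $\ad_\ell(uv) = u_0 v_1 + u_1 v_0$, whereas $\ad_\ell(u)\,v + u\,\ad_\ell(v) = u_0 v_1 + u_1 v_0 + 2 u_1 v_1$, so the two sides agree precisely because the term $2u_1v_1$ vanishes; conversely, the derivation identity forces each of the three inclusions, the rule $1 * 1 = \{0\}$ coming from $\ad_\ell(uv) = 2uv = 0$. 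What your route buys: one bilinear identity replaces three inclusions, so it can be checked on pairs of points of $\cP$ rather than on products of eigenspace generators, each case is shorter, and the role of characteristic $2$ is completely transparent. What the paper's route buys: explicit product formulas for the eigenspace generators (such as $\pi \cdot \pi' = \pi + \pi' + w$ in case \itemref{pr:2CQ}{a} and $\pi \cdot \pi' = 0$ in case \itemref{pr:2CQ}{b}), which are of independent interest. The geometric core is the same in both proofs: the essential case of two points of $\cP_2$ spanning distinct quadrilaterals with $\ell$ is governed by the two configurations of \cref{pr:2CQ}, and the point computations in your scheme do close up in both configurations, for collinear as well as non-collinear pairs (I verified them). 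Two small corrections to your write-up: the exhaustiveness of your case division rests on the $0$-$2$ property of symplectic type (\cref{def:symplectic}), not on \cref{th:rank3} as you cite; and the non-collinear cross-quadrilateral pairs require the same \cref{pr:2CQ}-based expansion as the collinear ones, so they belong with your ``main obstacle'' rather than with the cases that ``collapse quickly.''
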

\begin{proof}
    Let $\ell \in \cL$ and consider the decomposition $A = A^\ell_0 \oplus A^\ell_1$ into eigenspaces from \cref{pr:symplectic-dec}, with
    \begin{align*}
        A^\ell_0 &= \langle a, b, c\rangle \oplus \langle \pi \mid \pi \text{ a complete quadrilateral with } \ell \subset \pi \rangle \oplus \langle w \mid w \in \cP_0 \rangle , \\
        A^\ell_1 &= \langle \ell z \mid z \in \cP_2 \rangle .
    \end{align*}
    We first show that $A^\ell_0$ is a subalgebra of $A$. Notice that $\langle a,b,c \rangle$ is a subalgebra and that also $\langle w \mid w \in \cP_0 \rangle$ is a subalgebra by \cref{pr:P0 subspace}. For any $\pi$ through~$\ell$, we already know that $\langle a,b,c \rangle \cdot \pi \in A_0^\ell$ by \cref{th:des-decalg}, and obviously $\langle a,b,c \rangle \cdot w = 0$ for any $w \in \cP_0$. Next, we consider two distinct complete quadrilaterals $\pi, \pi'$ through~$\ell$. Labeling the points of $\pi$ and $\pi'$ as in \cref{pr:2CQ}, we have
    \begin{align*}
        \pi \cdot \pi'
        &= (\ell+x+y+z)(\ell+p+q+r) \\
        &= \ell (x+y+z) + \ell (p+q+r) + (x+y+z)(p+q+r) \\
        &= (x+y+z)(p+q+r) .
    \end{align*}
    If $\pi$ and $\pi'$ are as in \itemref{pr:2CQ}{a}, then
    \begin{align*}
        \pi \cdot \pi'
        &= (x+p+w)+(y+q+w)+(z+r+w) \\
        &= \pi + \pi' + w \in A_0^\ell.
    \end{align*}
    If $\pi$ and $\pi'$ are as in \itemref{pr:2CQ}{b}, then
    \begin{multline*}
        \pi \cdot \pi' = (x+q+f) + (x+r+e) + (y+p+f) + (y+r+d) \\ + (z+p+e) + (z+q+d) = 0.
    \end{multline*}
    Next, let $\pi$ be any complete quadrilateral through $\ell$, labeled as in \cref{fig:CQ}, and let $w \in \cP_0$. Of course, $\pi \cdot w = 0$ if $w$ is not collinear with any of $x,y,z$, so we may assume that $w \sim x$. By \cref{pr:converse P0} (and its proof), $\pi$ and $w$ are as in \itemref{pr:2CQ}{a}. Then
    \begin{align*}
        \pi \cdot w = (\ell + x + y + z)w
        &= (x + p + w) + (y + q + w) + (z + r + w) \\
        &= \pi + \pi' + w \in A_0^\ell.
    \end{align*}
    This covers all cases to show that $A_0^\ell$ is indeed a subalgebra of $A$.
    
    We now show that $A_0^\ell \cdot A_1^\ell \subseteq A_1^\ell$.
    Recall that if $\pi$ is a complete quadrilateral through $\ell$ labeled as in \cref{fig:CQ}, then $\ell z = a + b + x + y$.
    Moreover, we already know that $\langle a,b,c,\pi \rangle \cdot \ell x \in A_1^\ell$ by \cref{th:des-decalg}.
    Next, consider a complete quadrilateral $\pi'$ through $\ell$ different from $\pi$ and label it as in \cref{pr:2CQ}. Then
    \[ \pi' \cdot \ell z = \ell \cdot \ell z + (p + q + r)(a + b + x + y) . \]
    Notice that $\ell \cdot \ell z = \ell z$ by \cref{le:lla=la}. Moreover, we have
    \[ (p + q + r)(a + b) = (p+r+b) + (q+r+a) + (r+a+q) + (r+b+p) = 0 . \]
    If $\pi,\pi'$ are as in \itemref{pr:2CQ}{a}, then
    \begin{align*}
        (p + q + r)(x + y) &= (p+x+w) + (q+y+w) \\
        &= (a+b+x+y) + (a+b+p+q) = \ell z + \ell r ,
    \end{align*}
    so $\pi' \cdot \ell z = \ell r \in A_1^\ell$.
    If $\pi,\pi'$ are as in \itemref{pr:2CQ}{b}, then
    \begin{align*}
        (p + q + r)(x + y) &= (p+y+f) + (q+x+f) + (r+x+e) + (r+y+d) \\
        &= (a+b+d+e) + (a+b+p+q) = \ell f + \ell r ,
    \end{align*}
    so $\pi' \cdot \ell z = \ell z + \ell f + \ell r \in A_1^\ell$.
    Next, we let $w \in \cP_0$ and $z \in \cP_2$. If $w \nsim x$ and $w \nsim y$, then of course $w \cdot \ell z = 0$, so assume that $w \sim x$ (and hence also $w \sim y$ and $w \sim z$) so that again, $w$ and $\pi$ are as in \itemref{pr:2CQ}{a}. Then
    \begin{align*}
        w \cdot \ell z &= w (a + b + x + y) = (w+x+p) + (w+y+q) \\
        &= (a+b+x+y) + (a+b+p+q) = \ell z + \ell r \in A_1^\ell .
    \end{align*}
    
    Finally, we show that $A_1^\ell \cdot A_1^\ell \subseteq A_0^\ell$. Let $z, z' \in \cP_2$.
    If $z'$ belongs to the complete quadrilateral $\pi$ spanned by $\ell$ and $z$, then the result follows again from \cref{th:des-decalg}.
    Assume now that $\ell$ and $z'$ span a complete quadrilateral $\pi'$ different from $\pi$ and label it as in \cref{pr:2CQ}.
    We have to distinguish between $z' = r$ versus $z' = q$ (or $p$, but this amounts to interchanging $p$ and $q$ without loss of generality).
    
    So we first show that $\ell z \cdot \ell r \in A_0^\ell$. (In fact, we will have $\ell z \cdot \ell r = 0$.)
    Notice that $a \wedge b = p \wedge q = x \wedge y = c$, so by \itemref{le:lm}{2}, we have
    \begin{align*}
        (a+b)(x+y) &= a+b+x+y, \\
        (a+b)(p+q) &= a+b+p+q, \\
        (x+y)(p+q) &= x+y+p+q,
    \end{align*}
    hence
    \[ \ell z \cdot \ell r = (a+b+x+y)(a+b+p+q) = 0 . \]
    We now show that $\ell z \cdot \ell q \in A_0^\ell$.
    Using the fact that $(a+b)(a+c) = a+b+c$, that $(a+b)(p+r) = a+r+q$ and $(a+c)(x+y) = a+y+z$, we get
    \begin{align*}
        \ell z \cdot \ell q &= (a+b+x+y)(a+c+x+r) \\
        &= a+b+c+q+r+y+z + (x+y)(p+r) .
    \end{align*}
    If $\pi,\pi'$ are as in \itemref{pr:2CQ}{a}, then we get
    \[ \ell z \cdot \ell q = (a+b+c) + \pi + \pi' + w \in A_0^\ell . \]
    If $\pi,\pi'$ are as in \itemref{pr:2CQ}{b}, then we get
    \[ \ell z \cdot \ell q = \pi + \pi' + \pi'' + w \in A_0^\ell , \]
    where $\pi''$ is as in the statement of \itemref{pr:2CQ}{b}.
    We have now shown that $A_1^\ell \cdot A_1^\ell \subseteq A_0^\ell$ in all possible cases.
\end{proof}

\subsection{Fischer spaces not of symplectic type}

Finally, we show that when the Fischer space $\geom$ is not of symplectic type, then the corresponding nilpotent Matsuo algebra has a decomposition that is no longer $\Z/2\Z$-graded.
It suffices to go through the list of possible Fischer spaces of rank $4$ that are not of symplectic type.
It will turn out that in each case, the relation $A_1^\ell \cdot A_1^\ell \subseteq \tilde A_0^\ell$ fails for at least one line $\ell$.
(See, however, \cref{rem:some} below.)

Since this amounts to finding a single instance of a line and two elements of $A_1^\ell$ whose product does not lie in $\tilde A_0^\ell$, we have performed this task by computer, and we only present the result.

We first recall the classification of Fischer spaces of rank $4$.
\begin{theorem}\label{th:classif}
    Let $\geom = (\cP, \cL)$ be a Fischer space of rank $4$.
    Then $\geom$ is the Fischer space of one of the following $3$-transposition groups:
    \begin{enumerate}
        \item $W(A_4)$, the Weyl group of type $A_4$ (with $|\cP| = 10$), of symplectic type;
        \item $W(D_4)$, the Weyl group of type $D_4$ (with $|\cP| = 12$), of symplectic type;
        \item $3^3 \colon \Sym(4)$ (with $|\cP| = 18$), not of symplectic type;
        \item $2^6 \colon \mathrm{SU}_3(2)'$ (with $|\cP| = 36$), not of symplectic type;
        \item Marshall Hall's $3$-transposition group $[3^{10}] \colon 2$ (with $|\cP| = 81$) or its affine quotient $3^{3+3} \colon 2$ (with $|\cP| = 27$), both of ``Moufang type'', i.e., not containing any complete quadrilaterals but only affine planes.
    \end{enumerate}
\end{theorem}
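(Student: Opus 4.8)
The plan is to reduce the statement to the classification of finite $3$-transposition groups, due to Fischer and completed by Cuypers and Hall \cite{CH95}, via the standard correspondence between connected Fischer spaces and $3$-transposition groups whose distinguished set is a single conjugacy class (\cref{rem:connected}). Under this dictionary, a $3$-transposition group whose Fischer space has rank $4$ corresponds to a group generated by a suitably small configuration of transpositions, so the real work is to decide which groups on the classification list produce a Fischer space of rank exactly $4$ and to check that the six items above are precisely this set. Since the rank of a Fischer space grows with the Lie rank in the classical families and is large for the Fischer sporadic groups, only finitely many members qualify; I would verify the point counts $|\cP| \in \{10,12,18,36,27,81\}$ against the orders of the relevant root systems or modules as a consistency check.

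To organize the argument without simply quoting the entire classification, I would split according to the symplectic/non-symplectic dichotomy of \cref{def:symplectic}. In the symplectic case every rank-$3$ subspace is a complete quadrilateral, and \cref{pr:2CQ} already describes exactly how two such quadrilaterals meet in a common line: configuration \itemref{pr:2CQ}{a} is the local picture of the Fischer space of $W(A_4)$, while configuration \itemref{pr:2CQ}{b} forces the extra quadrilateral $\pi''$ that one sees in $W(D_4)$. I would then argue that in a space of rank $4$ these local pictures must close up: if every pair of quadrilaterals through a fixed line is of type \itemref{pr:2CQ}{a}, the subspace they generate is already all of $\geom$ and is isomorphic to the $10$-point $W(A_4)$ space, whereas the appearance of a single type-\itemref{pr:2CQ}{b} pair produces the $12$-point $W(D_4)$ space and the rank bound forbids further growth. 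This yields items (i) and (ii) and is essentially the content of the remark following \cref{pr:2CQ}.

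The non-symplectic case is the genuine obstacle. Here $\geom$ contains an affine plane (\itemref{ex:fischer}{A3}), which signals a nontrivial normal $3$-subgroup in the associated group, and the two-quadrilateral analysis of \cref{pr:2CQ} no longer suffices. I would invoke the structure theory for $3$-transposition groups with normal $2$- and $3$-subgroups from \cite{CH95} to show that a rank-$4$ non-symplectic space is either of ``orthogonal/unitary over $\F_3$'' type, giving $3^3:\Sym(4)$ and $2^6:\mathrm{SU}_3(2)'$, or of ``Moufang type'' in which no complete quadrilaterals occur at all, giving Marshall Hall's group $[3^{10}]:2$ and its affine quotient $3^{3+3}:2$. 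The hard part is exactly this enumeration: ruling out the remaining solvable and quasisimple possibilities of rank $4$ and confirming that the listed spaces are pairwise non-isomorphic is what requires the full force of the Fischer--Cuypers--Hall classification, so in practice I would present this final step as a citation to \cite{CH95} rather than reprove it.
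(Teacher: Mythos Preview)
The paper does not actually prove this theorem: it simply cites \cite[Proposition (4.2)]{HS95}, noting that the classification was obtained independently by Zara, Hall and Moori, with Zara's 1984 thesis apparently the earliest written source. Your proposal is therefore considerably more ambitious than what the paper does.

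Your plan is reasonable as an outline, and the symplectic half is in the right spirit: the paper itself remarks after \cref{pr:2CQ} that this proposition is ``nothing more than a geometric proof of the classification of symplectic Fischer spaces of rank $4$'', distinguishing $W(A_4)$ from $W(D_4)$ exactly as you describe. Where your sketch is thin is the claim that the local pictures ``must close up'' to give a space of precisely $10$ or $12$ points; this needs an actual argument (e.g., that any further point collinear with the configuration already lies in it), and you should not expect \cref{pr:2CQ} alone to deliver it. For the non-symplectic half you are, in the end, also citing the literature, but you point to \cite{CH95} (the trivial-center classification) rather than to the source the paper uses, \cite{HS95}, where the rank-$4$ list is stated explicitly; deriving the rank-$4$ list from \cite{CH95} is doable but is an extra layer of bookkeeping you have not carried out. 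In short: nothing you say is wrong, but the paper treats this as a black-box citation, and if you want to go beyond that you would need to fill in the closure argument in the symplectic case and either cite \cite{HS95} directly or genuinely extract the non-symplectic rank-$4$ groups from the general classification.
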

\begin{proof}
    The proof of this fact, together with the size of $\cP$ in each case, can be found in \cite[Proposition (4.2)]{HS95}, where the authors point out that this classification has been proven independently by Zara, Hall and Moori; the first written source seems to be Zara's (unpublished) thesis from 1984.
\end{proof}

\begin{examples}\label{ex:rank4}
    \begin{enumerate}[(1)]
        \item Let $\geom$ be of type $3^3 \colon \Sym(4)$ and let $\ell = \{ a,b,c \}$ be a line contained in a complete quadrilateral labeled as in \cref{fig:CQ}. Let $m = \{ a', b', c' \}$ be a line parallel to $\ell$ in some affine plane through $\ell$, labeled such that the lines $aa'$, $bb'$ and $cc'$ are parallel. Then $a'+b' \in A_1^\ell$ and $\ell z \in A_1^\ell$, but their product is not contained in $\tilde A_0^\ell$.
        \item\label{ex:AG33} Let $\geom$ be the affine $3$-space, and label its $27$ points by $[p, q, r]$ with $p,q,r \in \F_3$. Let $\ell$ be the line $\{ [0,0,0], [1,0,0], [2,0,0] \}$. Then $[0,1,0] + [1,1,0] \in A_1^\ell$ and $[1,0,1] + [2,0,1] \in A_1^\ell$, but their product is not contained in $\tilde A_0^\ell$. (In fact, this product is again an element of $A_1^\ell$ itself.)
        \item Let $\geom$ be the Fischer space of type $2^6 \colon \mathrm{SU}_3(2)'$. Explicitly, this group is the semidirect product of a $3$-dimensional vector space $V$ over $\F_4 = \{ 0, 1, \omega, \omega+1 \}$ (with $\omega^2 = \omega+1$) with the group $\mathrm{SU}_3(2)'$ generated by the involutions
            \[ d=\begin{psmallmatrix} 1 & 0 & 0 \\ 1 & 1 & 0 \\ 1 & 0 & 1 \end{psmallmatrix} , \
                e=\begin{psmallmatrix} 1 & 1 & 0 \\ 0 & 1 & 0 \\ 0 & \omega+1 & 1 \end{psmallmatrix}, \
                f=\begin{psmallmatrix} 1 & 0 & 1 \\ 0 & 1 & \omega \\ 0 & 0 & 1 \end{psmallmatrix} .
            \]
            (See \cite[p.\@~288]{Hal93}.)
            The points of $\geom$ are now in one-to-one correspondence with the conjugates of these $3$ involutions inside this semidirect product, and we will represent them in the form $[v, g]$ where $v \in V$ and $g \in \langle d,e,f \rangle$, with product given by $[v, g][w, h] = [v \cdot h + w, gh]$.
            
            Now let $\ell$ be the line $\{ [0, d], [0, e], [0, ded] \}$.
            Then $[0, f] + [0, defed] \in A_1^\ell$ and $[(0,0,\omega), f] + [(\omega+1, 1, \omega), defed] \in A_1^\ell$, but their product is not contained in~$\tilde A_0^\ell$ (and again, this is, in fact, an element of $A_1^\ell$).
        \item Let $\geom$ be the Fischer space of type $[3^{10}] \colon 2$, with $81$ points. We can use a similar configuration as in \cref{ex:AG33}. Explicitly, label the $81$ points of $\geom$ by $[p,q,r,s]$ with $p,q,r,s \in \F_3$ as in \cite[Example 2.7]{DRS23} (which is based on \cite{LB83}). For $\ell$, we then take the line $\{ [0,0,0,0], [1,0,0,0], [2,0,0,0] \}$. Then $[0,1,0,0] + [1,1,0,0] \in A_1^\ell$ and $[0,0,0,1] + [1,0,0,1] \in A_1^\ell$, but their product is not contained in $\tilde A_0^\ell$ (and once again, this is an element of $A_1^\ell$).
    \end{enumerate}
\end{examples}
\begin{theorem}\label{th:main}  
    Let $\geom$ be a Fischer space, with corresponding nilpotent Matsuo algebra $A = M_R(\geom, 1)$.
    For each line $\ell$ of $\geom$, consider the decomposition into the generalized eigenspaces $\tilde A_0^\ell \oplus \tilde A_1^\ell$.
    
    Then $A$ is a decomposition algebra with fusion law
	\begin{center}
		\renewcommand{\arraystretch}{1.2}
		\setlength{\tabcolsep}{0.75em}
		\begin{tabular}[b]{c|ccc}
			$*$	& $0$ & $1$ \\
			\hline
			$0$ & $\{ 0 \}$ & $\{ 1 \}$ \\
			$1$ & $\{ 1 \}$ & $\{ 0 \}$
		\end{tabular},
	\end{center}
    if and only if $\geom$ is of symplectic type or $\geom$ is an affine plane.
\end{theorem}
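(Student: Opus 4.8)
The plan is to prove the two implications separately, the forward one being essentially bookkeeping and the reverse one the substantive part. For the \emph{if} direction, I would first suppose that $\geom$ is of symplectic type. By \cref{le:lla=la} the operator $\ad_\ell$ is idempotent, hence semisimple, so for every line $\ell$ its generalized eigenspaces coincide with its eigenspaces, $\tilde A_i^\ell = A_i^\ell$; \cref{th:symplectic} then yields precisely the stated fusion law. This already covers every Fischer space of rank $\leq 3$ except the affine plane, since a point, a line and the complete quadrilateral are all of symplectic type. In the remaining case, where $\geom$ is an affine plane of order $3$, the assertion is exactly \cref{th:aff-decalg}. This disposes of the \emph{if} direction.

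For the converse I would argue by contraposition: assuming $\geom$ is neither of symplectic type nor an affine plane, the goal is to produce a single line $\ell$ and two vectors in $\tilde A_1^\ell$ whose product leaves $\tilde A_0^\ell$, contradicting the relation $1 * 1 = \{0\}$. The first step is to localize the obstruction inside a rank-$4$ subspace. Since $\geom$ is not of symplectic type, some two distinct intersecting lines generate a subspace that is not a complete quadrilateral, hence an affine plane $\mathcal{A}$ by \cref{th:rank3}. Applying \cref{th:rank3} once more, every Fischer space of rank $\leq 3$ is a point, a line, a complete quadrilateral or an affine plane; the first three are symplectic and the last is excluded by hypothesis, so $\operatorname{rank}(\geom) \geq 4$. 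Because $\geom$ is connected (\cref{rem:connected}) and properly contains $\mathcal{A}$, and because the connected subspaces of a Fischer space form a graded poset under inclusion, there is a connected rank-$4$ subspace $\geomH$ with $\mathcal{A} \subseteq \geomH$; as $\geomH$ contains the affine plane $\mathcal{A}$, it is not of symplectic type (\cref{def:symplectic}).

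By the classification in \cref{th:classif}, $\geomH$ is then one of $3^3 \colon \Sym(4)$, $2^6 \colon \mathrm{SU}_3(2)'$, the affine $3$-space $3^{3+3} \colon 2$, or $[3^{10}] \colon 2$, and for each of these \cref{ex:rank4} exhibits a line $\ell \subset \geomH$ together with two eigenvalue-$1$ eigenvectors $u,v$ of $\ad_\ell$, computed inside $B := M_R(\geomH, 1)$, whose product satisfies $uv \notin \tilde B_0^\ell$. To transport this failure up to $A$, I would note that $B$ is a subalgebra of $A$ because $\geomH$ is a subspace, and that $B$ is invariant under $\ad_\ell$ since $\ell \in B$. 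Restricting a linear operator to an invariant subspace intersects its generalized eigenspaces, i.e. $\ker(\ad_\ell - \lambda)^N \cap B$ computes the generalized $\lambda$-eigenspace of $\ad_\ell|_B$, so $\tilde B_i^\ell = \tilde A_i^\ell \cap B$. Consequently $u,v \in \tilde B_1^\ell \subseteq \tilde A_1^\ell$, while $uv \in B$ together with $uv \notin \tilde B_0^\ell = \tilde A_0^\ell \cap B$ forces $uv \notin \tilde A_0^\ell$; this is the desired violation, completing the contraposition.

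I expect the genuine difficulty to lie not in this framework but in the four explicit verifications behind \cref{ex:rank4}: checking, in each non-symplectic rank-$4$ space, that the chosen $u,v$ really are eigenvalue-$1$ eigenvectors of $\ad_\ell$ and that $uv$ escapes the generalized $0$-eigenspace is a finite but delicate case analysis, which is why it is best carried out by machine. A secondary point that needs care is the gradedness of the subspace poset used to extend $\mathcal{A}$ to a rank-$4$ subspace, equivalently the fact that adjoining a single collinear point to a connected subspace raises its rank by at most one; this guarantees the witness $\geomH$ has rank exactly $4$ and hence falls under \cref{th:classif}.
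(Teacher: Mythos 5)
Your proposal is correct and takes essentially the same route as the paper: the paper's proof of \cref{th:main} is literally the combination of \cref{th:symplectic}, \cref{th:aff-decalg} and the computer-verified counterexamples of \cref{ex:rank4}, which is exactly your argument. In fact you supply more detail than the paper does, since the paper compresses your entire localization step --- producing a non-symplectic connected rank-$4$ subspace $\geomH$ via \cref{th:classif} and transporting the failure of $\tilde A_1^\ell \cdot \tilde A_1^\ell \subseteq \tilde A_0^\ell$ from the subalgebra $M_R(\geomH,1)$ up to $A$ --- into the single sentence ``It suffices to go through the list of possible Fischer spaces of rank $4$ that are not of symplectic type,'' so the graded-poset point you rightly flag as needing care is left equally implicit in the paper itself.
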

\begin{proof}
    This now follows from \cref{th:aff-decalg,th:symplectic,ex:rank4}.
\end{proof}

\begin{remark}\label{rem:some}
    Interestingly, it can happen that for \emph{some} lines $\ell$, the corresponding decomposition still admits a $\Z/2\Z$-grading, but not for all lines. This happens, for instance, for the Fischer spaces of type $3^{n} \colon \Sym(n+1)$, for all $n \geq 3$. If we would restrict to only the ``good'' lines in such a Fischer space, then we would still get a decomposition algebra with a $\Z/2\Z$-grading, but with too few decompositions to be interesting. Notice that these ``good lines'' are precisely the ones that are only contained in affine planes (and not in any complete quadrilateral). The $3$ points on such a line are ``$\theta$-equivalent'' in the terminology of \cite[p.\@~150]{CH95}.
\end{remark}

\section{Miyamoto group for the complete quadrilateral}

The small example of the complete quadrilateral is particularly interesting because it admits a $\Z$-grading, and not just a $\Z/2\Z$-grading.
Indeed, recall from Theorem \ref{th:des-decalg} that the fusion law is given by
\begin{center}
	\renewcommand{\arraystretch}{1.2}
	\setlength{\tabcolsep}{0.75em}
	\begin{tabular}[b]{c|ccc}
		$*$	& $0$ & $1$ \\
		\hline
		$0$ & $\{ 0 \}$ & $\{ 1 \}$ \\
		$1$ & $\{ 1 \}$ & $\emptyset$
	\end{tabular}.
\end{center}
Notice that the map from $X = \{ 0, 1 \}$ to $\Z$ sending $0$ to $0$ and $1$ to $1$ determines a fusion law morphism (see \cref{def:grading}).
In particular, the algebras of Theorem \ref{th:des-decalg} and Corollary \ref{cor:des-decalg'} can be viewed as $\Z$-decomposition algebras.
Observe that the $R$-character group of $\Gamma = \Z$ is naturally isomorphic to $R^\times$.

\begin{definition} \label{def:MiyZ}
    Let $(A, \I, \Omega)$ be a $\Z$-decomposition algebra.
    For $i \in \I$ and $\lambda \in R^{\times}$, write $\tau_{i,\lambda}$ for the Miyamoto map given by
    \[ \tau_{i,\lambda}(a) = \lambda^{k} a \quad \text{for all } a \in A^i_k. \]
\end{definition}

\begin{remark}\label{rem:tau_ell}
	If $A = M_R(\geom, 1)$ and $\ell$ is a line of $\geom$ such that $A$ decomposes into eigenspaces $A = A_0^\ell \oplus A_1^\ell$, then we can write the corresponding Miyamoto map as
	\[ \tau_{\ell,\lambda} = \id + (1 + \lambda) \ad_\ell . \]
\end{remark}

Let $(A,\I,\Omega)$ be a $\Z$-decomposition algebra. For each fixed $i$, we have a multiplicative group
\[  T_{i}(R) := \langle \tau_{i, \lambda} \mid \lambda \in R^\times \rangle \cong R^\times \]
of automorphisms, so it makes sense to investigate the corresponding Miyamoto groups as \emph{algebraic groups}, rather than as abstract ones (at least when $A$ is finite-dimensional over a field).
Following \cite{Mil17}, we will view algebraic groups as functors from the category of commutative $k$-algebras to the category of groups.

\begin{definition}
Fix a base field $k$ and let $(A,\I,\Omega)$ be a $\Z$-decomposition algebra, with $A$ finite-dimensional over $k$. Define the \emph{(full) Miyamoto group} of $(A, \I, \Omega)$ as the algebraic group over $k$ generated by the tori $T_i$:
\[ \Miy(A, \I, \Omega) := \langle T_i \mid i \in \I \rangle \leq \Aut(A) . \]
\end{definition}

\begin{remark}\label{rem:Rpts1}
    Recall that, by definition, this is the smallest \emph{closed} subgroup of $\Aut(A)$ containing these tori $T_i$. In particular, it will \emph{not} be true in general that the group of $R$-points $\Miy(A, \I, \Omega)(R)$ is generated by the groups $T_i(R)$ for all $k$-algebras $R$. We will see this explicitly in \cref{rem:Rpts2} below.
\end{remark}

From now on, let $A$ be the $6$-dimensional algebra from \cref{th:des-decalg} and let $A' := A / \Ann(A)$ be its $5$-dimensional quotient as in \cref{cor:des-decalg'}. 
We investigate their Miyamoto groups as algebraic groups over $\F_2$.
We keep the notations from \cref{ss:CQ}, so in particular, the points $a,b,c,x,y,z$ are labeled as in \cref{fig:CQ}.
%

\begin{lemma}\label{le:CQ-Miy}
    Let $\ell = \{ a,b,c \}$ as in \cref{fig:CQ} and let $\lambda \in R^\times$. Then the Miyamoto maps $\tau_{\ell, \lambda}$ of $A$ fix $a$, $b$ and $c$ and
    \begin{align*}
        \tau_{\ell, \lambda}(x) &= x + (1 + \lambda) \ell x , \\
        \tau_{\ell, \lambda}(y) &= y + (1 + \lambda) \ell y , \\
        \tau_{\ell, \lambda}(z) &= z + (1 + \lambda) \ell z .
    \end{align*}
\end{lemma}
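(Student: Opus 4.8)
The plan is to reduce everything to the operator identity recorded in \cref{rem:tau_ell}, namely $\tau_{\ell,\lambda} = \id + (1+\lambda)\ad_\ell$, and then simply evaluate both sides on the six basis points $a,b,c,x,y,z$. Before doing so I would briefly justify that identity in the present setting. By \cref{pr:des-decs} the algebra splits as $A = A_0^\ell \oplus A_1^\ell$ into the $0$- and $1$-eigenspaces of $\ad_\ell$, and $\tau_{\ell,\lambda}$ is defined to act as $\lambda^0 = 1$ on $A_0^\ell$ and as $\lambda^1 = \lambda$ on $A_1^\ell$. On $A_0^\ell$ the operator $\ad_\ell$ vanishes, so $\id + (1+\lambda)\ad_\ell$ restricts to $\id$; on $A_1^\ell$ we have $\ad_\ell = \id$, so $\id + (1+\lambda)\ad_\ell$ restricts to $(2+\lambda)\,\id = \lambda\,\id$ because $2 = 0$ in $R$. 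Thus the two linear maps agree on each summand, hence on all of $A$.

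With the identity in hand, the three fixed-point statements are immediate: since $a,b,c \in \ell$, \itemref{le:lx}{1} gives $\ell a = \ell b = \ell c = 0$, i.e.\ $\ad_\ell$ kills each of them, so that $\tau_{\ell,\lambda}(a) = a$ and likewise for $b$ and $c$. For the remaining three points I would just read off the definition of the operator: $\tau_{\ell,\lambda}(x) = x + (1+\lambda)\ad_\ell(x) = x + (1+\lambda)\,\ell x$, and identically for $y$ and $z$, which is exactly the claimed formula.

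There is no genuine obstacle here; the only point that uses the hypothesis $2 = 0$ is the verification that $(1+\lambda)\ad_\ell$ produces the scalar $\lambda$ rather than $1+\lambda$ on the $1$-eigenspace, and this is precisely where the collapse $1 + 1 + \lambda = \lambda$ enters. Everything else is a one-line substitution, so I would keep the write-up to these two ingredients and not belabour the bookkeeping.
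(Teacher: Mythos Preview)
Your proof is correct and follows exactly the same approach as the paper, which simply cites \cref{rem:tau_ell}; you have in fact gone further by also supplying a justification for that operator identity. There is nothing to add or correct.
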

\begin{proof}
    This follows immediately from \cref{rem:tau_ell}.
\end{proof}
The previous lemma suggests to use the basis
\begin{equation}\label{eq:basis}
    \B = (a,\ b,\ \ell,\ \underbrace{b+c+y+z}_{\ell x},\ \underbrace{a+c+x+z}_{\ell y},\ s)
\end{equation}
for $A$, and that is what we will be doing.
In fact, the multiplication table of $A$ with respect to this basis $\B$ looks very simple:
\[
\label{ta:A}
\begin{array}{c|cccccc}
    \cdot & a & b & \ell & \ell x & \ell y & s \\
    \hline
    a & 0 & \ell & 0 & 0 & \ell x & 0 \\
    b & \ell & 0 & 0 & \ell y & 0 & 0 \\
    \ell & 0 & 0 & 0 & \ell x & \ell y & 0 \\
    \ell x & 0 & \ell y & \ell x & 0 & 0 & 0 \\
    \ell y & \ell x & 0 & \ell y & 0 & 0 & 0 \\
    s & 0 & 0 & 0 & 0 & 0 & 0
\end{array}
\]
\begin{remark}\label{rem:AAA}
    From the multiplication table, we see that $A \cdot A = \langle \ell, \ell x, \ell y \rangle$ and $A \cdot (A \cdot A) = \langle \ell x, \ell y \rangle$, which gives us two interesting subalgebras that are invariant under arbitrary automorphisms of~$A$.
    Recall that $\Ann(A) = \langle s \rangle$, which gives us a third subalgebra invariant under $\Aut(A)$.
\end{remark}

The following notation will turn out to be useful.
\begin{definition}
    \begin{enumerate}
        \item 
            For each $\alpha,\beta \in R$, we define a $3$ by $3$ matrix
            \[ M_{\alpha,\beta} := \begin{pmatrix}
                \alpha & 0 & \beta \\
                0 & \beta & \alpha \\
                0 & 0 & 0
            \end{pmatrix} . \]
        \item
            For each $3$ by $3$ matrix $M$ over $R$ and each $\lambda \in R^\times$, we define a $6$ by $6$ matrix
            \[ S_{M,\lambda} := \begin{pmatrix}
                I_3 & O_3 \\
                M & \begin{psmallmatrix} \lambda & & \\ & \lambda & \\ & & 1 \end{psmallmatrix}
            \end{pmatrix} , \]
            where $I_3$ is the $3$ by $3$ identity matrix and $O_3$ is the $3$ by $3$ zero matrix. Moreover, we define
            \[  S_{\alpha,\beta,\lambda} := S_{M_{\alpha,\beta},\lambda} \]
            for all $\alpha,\beta \in R$ and $\lambda \in R^\times$.
    \end{enumerate}
\end{definition}
\begin{lemma}\label{le:CQ-Miy1}
    Consider the algebraic group $G = \G_a^2 \rtimes \G_m$, where the (left) action of $\G_m$ on $\G_a^2$ is given by multiplication.
    Then the set of matrices
    \[ \{ S_{\alpha,\beta,\lambda} \mid \alpha,\beta \in R, \lambda \in R^\times \} \]
    forms a group under matrix multiplication, isomorphic to $G(R)$.
\end{lemma}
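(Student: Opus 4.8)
The plan is to verify that the set $\{ S_{\alpha,\beta,\lambda} \mid \alpha,\beta \in R,\ \lambda \in R^\times \}$ is closed under matrix multiplication and inversion, and then to exhibit an explicit group isomorphism to $G(R) = (\G_a^2 \rtimes \G_m)(R)$. The key structural observation is that the block-lower-triangular shape of $S_{M,\lambda}$ is designed so that matrix multiplication respects this shape: since the top-left block is always $I_3$ and the top-right block is always $O_3$, a product $S_{M,\lambda} \cdot S_{N,\mu}$ again has top blocks $I_3$ and $O_3$, and one only has to track how the bottom-left and bottom-right blocks combine. Writing $D_\lambda := \operatorname{diag}(\lambda,\lambda,1)$ for the bottom-right block, the product rule reads
\[
S_{M,\lambda} \cdot S_{N,\mu} = S_{\,M + D_\lambda N,\ \lambda\mu\,},
\]
which I would record first as a lemma-free direct block computation.

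First I would compute this product formula for general $3\times 3$ matrices $M,N$, so that the group structure on the larger family $\{S_{M,\lambda}\}$ is transparent: composition multiplies the scalars $\lambda$ and acts on the matrix part by an affine twisted-addition. The next step is to check that the special family $M_{\alpha,\beta}$ is stable under the operation $M \mapsto M + D_\lambda N$ when $M = M_{\alpha,\beta}$ and $N = M_{\gamma,\delta}$. Here the crucial point is that $D_\lambda M_{\gamma,\delta} = M_{\lambda\gamma,\lambda\delta}$; one sees this because left-multiplying $M_{\gamma,\delta}$ by $\operatorname{diag}(\lambda,\lambda,1)$ scales its first two rows by $\lambda$ and kills nothing in the third row (which is already zero), and the nonzero entries $\gamma,\delta$ both sit in those first two rows in exactly the positions that define $M_{\gamma,\delta}$. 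Consequently
\[
S_{\alpha,\beta,\lambda} \cdot S_{\gamma,\delta,\mu} = S_{\,\alpha + \lambda\gamma,\ \beta + \lambda\delta,\ \lambda\mu\,},
\]
which shows closure. Inverses then follow immediately by solving $\alpha + \lambda\gamma = 0$, $\beta + \lambda\delta = 0$, $\lambda\mu = 1$, giving $S_{\alpha,\beta,\lambda}^{-1} = S_{-\lambda^{-1}\alpha,\,-\lambda^{-1}\beta,\,\lambda^{-1}}$, and the identity is $S_{0,0,1}$.

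To finish, I would define the map $\Psi \colon G(R) \to \{S_{\alpha,\beta,\lambda}\}$ by sending the point $\bigl((\alpha,\beta),\lambda\bigr) \in (\G_a^2 \rtimes \G_m)(R)$ to $S_{\alpha,\beta,\lambda}$. Recalling that in the semidirect product $\G_a^2 \rtimes \G_m$ with $\G_m$ acting by multiplication, the group law on $R$-points is $\bigl((\alpha,\beta),\lambda\bigr)\bigl((\gamma,\delta),\mu\bigr) = \bigl((\alpha,\beta) + \lambda\cdot(\gamma,\delta),\ \lambda\mu\bigr) = \bigl((\alpha+\lambda\gamma,\ \beta+\lambda\delta),\ \lambda\mu\bigr)$, the product formula above shows precisely that $\Psi$ is a homomorphism. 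Bijectivity is clear since $(\alpha,\beta,\lambda)$ are read off directly from the matrix entries. I do not expect any serious obstacle here; the only thing requiring care is matching the \emph{convention} for the semidirect product (left action of $\G_m$ on $\G_a^2$ by multiplication) with the order in which the blocks multiply, so that the twisting scalar $\lambda$ lands on the correct factor. The computation $D_\lambda M_{\gamma,\delta} = M_{\lambda\gamma,\lambda\delta}$ is the one genuinely content-bearing verification, and it is exactly what forces the specific pattern of the matrices $M_{\alpha,\beta}$ — the zero third row and the placement of $\alpha,\beta$ in the top two rows — to be the right one for the family to be closed.
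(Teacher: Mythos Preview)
Your proposal is correct and follows essentially the same route as the paper: both arguments rest on the product formula $S_{\alpha,\beta,\lambda}\cdot S_{\gamma,\delta,\mu}=S_{\alpha+\lambda\gamma,\ \beta+\lambda\delta,\ \lambda\mu}$ and then read off the semidirect-product structure. The paper is a bit terser (it just states this formula and checks the conjugation action of $S_{0,0,\lambda}$ on $S_{\alpha,\beta,1}$ rather than writing down an explicit bijection $\Psi$), while you take a slightly longer path through the general block identity $S_{M,\lambda}\cdot S_{N,\mu}=S_{M+D_\lambda N,\ \lambda\mu}$ and the observation $D_\lambda M_{\gamma,\delta}=M_{\lambda\gamma,\lambda\delta}$; these are differences of exposition, not of substance.
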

\begin{proof}
    It is straightforward to check that
    \[ S_{\alpha,\beta,\lambda} \cdot S_{\gamma,\delta,\mu} = S_{\alpha + \lambda\gamma,\, \beta + \lambda\delta,\, \lambda\mu} \]
    for all $\alpha,\beta,\gamma,\delta \in R$ and $\lambda,\mu \in R^\times$. In particular,
    \begin{align*}
        &\{ S_{\alpha,\beta,1} \mid \alpha,\beta \in R \} \cong R^2 , \\
        &\{ S_{0, 0, \lambda} \mid \lambda \in R^\times \} \cong R^\times ,
    \end{align*}
    and the conjugation action of the second subgroup on the first is given by
    \[ S_{0, 0, \lambda} \cdot S_{\alpha,\beta,1} \cdot S_{0, 0, \lambda}^{-1} = S_{\lambda\alpha,\lambda\beta,1} . \qedhere \]
\end{proof}
\begin{lemma}\label{le:CQ-Miy2}
    Consider the four lines $\ell_1 = \{ a,b,c \}$, $\ell_2 = \{ b,x,z \}$, $\ell_3 = \{ a, y, z \}$ and $\ell_4 = \{ c, x, y \}$.
    Then with respect to the basis $\B$ as in \cref{eq:basis}, the Miyamoto maps with respect to these four lines have matrix representations
    \begin{align*}
        \tau_{\ell_1,\lambda} &= S_{0,\, 0,\, \lambda} , \\
        \tau_{\ell_2,\lambda} &= S_{1+\lambda,\, 0,\, \lambda} , \\
        \tau_{\ell_3,\lambda} &= S_{0,\, 1+\lambda,\, \lambda} , \\
        \tau_{\ell_4,\lambda} &= S_{1+\lambda,\, 1+\lambda,\, \lambda} .
    \end{align*}
\end{lemma}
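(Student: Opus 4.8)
The plan is to reduce the whole statement to a computation of the operators $\ad_{\ell_i}$ on the basis $\B$, and then to read off the matrix $S_{\alpha,\beta,\lambda}$ by matching columns. First I would decode what $S_{\alpha,\beta,\lambda}$ means as a linear map in the ordered basis $\B = (a,b,\ell,\ell x,\ell y,s)$. Writing elements as columns split into the blocks $(a,b,\ell)$ and $(\ell x,\ell y,s)$, the block shape of $S_{M_{\alpha,\beta},\lambda}$ shows that it encodes exactly
\[ a \mapsto a + \alpha\, \ell x, \quad b \mapsto b + \beta\, \ell y, \quad \ell \mapsto \ell + \beta\, \ell x + \alpha\, \ell y, \]
together with $\ell x \mapsto \lambda\, \ell x$, $\ell y \mapsto \lambda\, \ell y$ and $s \mapsto s$. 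So it suffices, for each of the four lines, to express the image of every basis vector under the corresponding Miyamoto map in these terms.

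By \cref{th:des-decalg}, each of the four lines $\ell_i$ of the complete quadrilateral yields an eigenspace decomposition $A = A^{\ell_i}_0 \oplus A^{\ell_i}_1$, so \cref{rem:tau_ell} applies and gives $\tau_{\ell_i,\lambda} = \id + (1+\lambda)\ad_{\ell_i}$; the task thus becomes computing $\ad_{\ell_i}$ on $\B$. Two of the six columns are immediate and uniform across all four lines. Since $s \in \Ann(A)$ by \cref{le:ann}, we have $\ell_i \cdot s = 0$; and since the eigenvalue-$1$ space $A^{\ell_i}_1 = \langle \ell x, \ell y\rangle$ is independent of the chosen line (by the remark following \cref{pr:des-decs}), both $\ell x$ and $\ell y$ are eigenvectors of $\ad_{\ell_i}$ with eigenvalue $1$. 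Hence $\tau_{\ell_i,\lambda}$ fixes $s$ and scales $\ell x,\ell y$ by $1+(1+\lambda)=\lambda$ (using $2=0$), which already matches the lower-right block of $S_{\alpha,\beta,\lambda}$ for every $i$. For $\ell_1 = \{a,b,c\}$ the remaining three columns are handled directly by \cref{le:CQ-Miy}, which shows that $a,b,\ell$ are fixed; this gives $\tau_{\ell_1,\lambda} = S_{0,0,\lambda}$.

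For $\ell_2,\ell_3,\ell_4$ it then remains only to compute the three products $\ell_i \cdot a$, $\ell_i \cdot b$ and $\ell_i \cdot \ell$. Each is a point-times-line or line-times-line product inside the complete quadrilateral, so it is handed to us by \itemref{le:lx}{2} and \itemref{le:lm}{2}; in every case the answer comes out as a sum of two of the line nilpotents among $\ell,m,n,\ell_4$. The only genuine bookkeeping is to rewrite these in the fixed basis $\B$ via the linear identities $m = \ell + \ell x$, $n = \ell + \ell y$ and $\ell + \ell_4 = \ell x + \ell y$ (all checked by expanding into points and cancelling in characteristic $2$). For instance, for $\ell_2 = n$ one finds $n\cdot a = \ell x$, $n\cdot b = 0$ and $n\cdot\ell = n+\ell = \ell y$, so $\tau_{\ell_2,\lambda}$ sends $a \mapsto a+(1+\lambda)\ell x$, $b\mapsto b$ and $\ell \mapsto \ell+(1+\lambda)\ell y$, i.e.\ $\alpha = 1+\lambda$ and $\beta = 0$, giving $S_{1+\lambda,0,\lambda}$. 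The computations for $\ell_3$ and $\ell_4$ are entirely analogous and yield $S_{0,1+\lambda,\lambda}$ and $S_{1+\lambda,1+\lambda,\lambda}$.

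The only real obstacle is this characteristic-$2$ bookkeeping of re-expressing each product in the basis $\B$. Once the eigenspace structure and the identity $\tau_{\ell,\lambda} = \id + (1+\lambda)\ad_\ell$ are in hand, the shape of the answer is forced: the point-columns are fixed up to a contribution landing in $\langle \ell x,\ell y\rangle$, while $\ell x,\ell y$ are scaled by $\lambda$, and matching this against the definition of $S_{\alpha,\beta,\lambda}$ is just a comparison of coefficients.
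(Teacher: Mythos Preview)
Your proposal is correct and follows essentially the same approach as the paper, which simply says the result follows from \cref{le:CQ-Miy} ``and a little bit of computation''; you have merely spelled out that computation explicitly, correctly using the formula $\tau_{\ell_i,\lambda} = \id + (1+\lambda)\ad_{\ell_i}$ together with \itemref{le:lx}{2} and \itemref{le:lm}{2} and the basis identities $m = \ell + \ell x$, $n = \ell + \ell y$, $\ell_4 = \ell + \ell x + \ell y$.
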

\begin{proof}
    This follows from \cref{le:CQ-Miy} and a little bit of computation.
\end{proof}

\begin{theorem}\label{th:CQ-Miy}
    We have $\Miy(A, \I, \Omega) \cong \Miy(A', \I, \Omega) \cong \G_a^2 \rtimes \G_m$, where the action of $\G_m$ on $\G_a^2$ is given by multiplication.
\end{theorem}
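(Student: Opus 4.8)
The plan is to identify $\Miy(A)$ with the matrix group $G = \{\, S_{\alpha,\beta,\lambda} \mid \alpha,\beta\in R,\ \lambda\in R^\times \,\} \cong \G_a^2 \rtimes \G_m$ of \cref{le:CQ-Miy1}, working throughout with the functor-of-points description of algebraic groups and viewing $G$ as a closed subgroup of $\GL(A)$ via the faithful representation $S$. By \cref{le:CQ-Miy2}, each of the four tori $T_i = \langle \tau_{\ell_i,\lambda}\rangle$ lands inside $G$. Since $\Aut(A)$ is itself a closed subgroup of $\GL(A)$ containing the $T_i$, the smallest closed subgroup of $\GL(A)$ containing the $T_i$ is automatically contained in $\Aut(A)$, and therefore coincides with $\Miy(A)$. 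As $G$ is one such closed subgroup, this already gives the inclusion $\Miy(A) \subseteq G$, and it remains to prove that the $T_i$ generate all of $G$.

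For the reverse inclusion I would invoke the structure theory of algebraic groups (following \cite{Mil17}): the subgroup generated by a finite family of smooth connected algebraic subgroups is again smooth and connected, and two smooth connected subgroups of $G$ coincide as soon as they have the same points over an algebraically closed field. It therefore suffices to check that the abstract group generated by the $T_i(K)$ equals $G(K)$ for $K = \ov{\F}_2$. This is the crux, and the reason one cannot argue over $\F_2$ directly (cf.\ \cref{rem:Rpts1}): over $\F_2$ every $\tau_{\ell_i,\lambda}$ is trivial, whereas $K^\times$ is infinite. Using the multiplication rule of \cref{le:CQ-Miy1} one computes $\tau_{\ell_2,\lambda}\,\tau_{\ell_1,\lambda}^{-1} = S_{1+\lambda,\,0,\,1}$; as $\lambda$ runs through $K^\times$ the entry $1+\lambda$ runs through $K\setminus\{1\}$, which generates $(K,+)$ additively, so one recovers the full unipotent line $\{\, S_{\alpha,0,1} \mid \alpha\in K \,\} \cong \G_a$. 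The line $\ell_3$ yields $\{\, S_{0,\beta,1} \mid \beta\in K \,\}\cong \G_a$ in the same way, and together with $T_1 = \{\, S_{0,0,\lambda} \mid \lambda\in K^\times\,\}\cong \G_m$ these produce every $S_{\alpha,\beta,\lambda}$, i.e.\ all of $G(K)$. Hence $\langle T_i\rangle = G$ and $\Miy(A) \cong \G_a^2 \rtimes \G_m$.

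Finally, for the quotient $A' = A/\Ann(A) = A/\langle s\rangle$ (cf.\ \cref{le:ann} and \cref{cor:des-decalg'}), I would observe from the matrix form of $S_{\alpha,\beta,\lambda}$ in the basis $\B$ of \cref{eq:basis} that $s$ is fixed, so the action descends to $A'$ and yields a homomorphism $\Miy(A)\to\Aut(A')$ with image $\Miy(A')$. Inspecting that same matrix shows the induced map on $A'$ still records $\alpha$, $\beta$ and $\lambda$ (these entries survive the passage to the quotient), so the homomorphism is injective; thus $\Miy(A)\cong\Miy(A')\cong \G_a^2\rtimes\G_m$. The main obstacle is the scheme-theoretic subtlety just described: the generation statement is false on $\F_2$-points and must be phrased for the generated algebraic subgroup and verified over $\ov{\F}_2$, relying on the fact that generated subgroups of smooth connected groups are detected on geometric points.
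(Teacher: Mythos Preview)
Your proposal is correct and follows essentially the same route as the paper: both use \cref{le:CQ-Miy2} to place the four tori inside the matrix group $G$ of \cref{le:CQ-Miy1}, then multiply $\tau_{\ell_i,\lambda}$ by $\tau_{\ell_1,\lambda}^{-1}$ to extract the unipotent elements $S_{1+\lambda,0,1}$ and $S_{0,1+\lambda,1}$, and finally pass to the separable (equivalently algebraic) closure of $\F_2$ to conclude that the generated closed subgroup is all of $G$; the argument for $\Miy(A)\cong\Miy(A')$ via the last row and column of $S_{\alpha,\beta,\lambda}$ is likewise the same. The only cosmetic difference is that you justify the passage to geometric points by invoking that a subgroup generated by smooth connected subgroups is smooth connected, whereas the paper instead uses directly that a closed subgroup of a smooth group with the same $k^s$-points must equal it.
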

\begin{proof}
    Let $G = \G_a^2 \rtimes \G_m$. For each $k$-algebra $R$, let
    \[ H(R) = \{ S_{\alpha,\beta,\lambda} \mid \alpha,\beta \in R, \lambda \in R^\times \} \cong G(R) \]
    as in \cref{le:CQ-Miy1}.
    By \cref{le:CQ-Miy2}, we already have $\Miy(A, \I, \Omega) \leq H$, and to show equality, it only remains to show that the algebraic group $T$ generated by the four tori
    \begin{align*}
        T_1(R) &:= \langle S_{0,\, 0,\, \lambda} \mid \lambda \in R^\times \rangle, &
        T_2(R) &:= \langle S_{1+\lambda,\, 0,\, \lambda} \mid \lambda \in R^\times \rangle, \\
        T_3(R) &:= \langle S_{0,\, 1+\lambda,\, \lambda} \mid \lambda \in R^\times \rangle, &
        T_4(R) &:= \langle S_{1+\lambda,\, 1+\lambda,\, \lambda} \mid \lambda \in R^\times \rangle,
    \end{align*}
    coincides with the whole group $H$. 
    Observe that $S_{\alpha,\beta,\lambda} \cdot S_{0, 0, \lambda^{-1}} = S_{\alpha, \beta, 1}$ for all $\alpha,\beta \in R, \lambda \in R^\times$, so we already have $S_{1+\lambda, 0, 1} \in T(R)$ and $S_{0, 1+\lambda, 1} \in T(R)$ for all $\lambda \in R^\times$.
    For $R = k^s$, the separable closure of $k$, these matrices clearly span all matrices of the form $S_{\alpha,\beta,1}$, so together with the $S_{0,0,\lambda}$ for all $\lambda \in (k^s)^\times$, we get the whole group $\{ S_{\alpha,\beta,\lambda} \mid \alpha,\beta \in k^s, \lambda \in (k^s)^\times \} = H(k^s)$, so $T(k^s) = H(k^s)$.
    Since $T \leq H$ and $H \cong G$ is smooth, this implies that $T = H$ as algebraic groups.
    
    It remains to show that $\Miy(A, \I, \Omega) \cong \Miy(A', \I, \Omega)$. Notice that each Miyamoto map of $(A, \I, \Omega)$ fixes $\Ann(A)$ elementwise. Since $A' = A/\Ann(A)$, we get an epimorphism
    \[ \rho \colon \Miy(A, \I, \Omega) \twoheadrightarrow \Miy(A', \I, \Omega) . \]
    With respect to the basis $\B$ for $A$ as in \cref{eq:basis} and the corresponding basis
    \begin{equation}\label{eq:basis'}
        \B' = (a,\ b,\ \ell,\ \ell x,\ \ell y)
    \end{equation}
    for $A'$, this map is simply given by canceling the last row and the last column of the matrix.
    Since all matrices $S_{\alpha, \beta, \lambda}$ have the same last row and last column, this implies that $\rho$ is injective, and hence an isomorphism.
\end{proof}
\begin{remark}\label{rem:Rpts2}
    As indicated in \cref{rem:Rpts1}, it is not true that $\langle T_i(R) \rangle_{i=1,2,3,4} \cong (\G_a^2 \rtimes \G_m)(R)$ for all $k$-algebras $R$.
    For instance, take $R = k[x]$, the algebra of polynomials over $k$. Then $T_i(R) = R^\times = k^\times$, so that the group generated by the $T_i(R)$ is $k^2 \rtimes k^\times \not\cong R^2 \rtimes R^\times$.
\end{remark}
To finish this section, we point out that the full automorphism groups $\Aut(A)$ and $\Aut(A')$ are substantially larger than the Miyamoto groups.

\begin{proposition} \label{pr:DAG22-Aut}
    We have isomorphisms as algebraic groups
    \begin{enumerate}
        \item $\Aut(A') \cong \G_a^2 \rtimes \GL_2$ and
        \item $\Aut(A) \cong \G_a^2 \rtimes (\Aut(A') \times \G_m)$, where $\GL_2 \leq \Aut(A')$ acts ``quadratically'' and $\G_a^2 \leq \Aut(A')$ acts trivially on the first copy of $\G_a^2$, see below.
    \end{enumerate}
\end{proposition}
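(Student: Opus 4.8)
The plan is to exploit the characteristic filtration of $A$ recorded in \cref{rem:AAA}. Write $W := A\cdot A = \langle \ell, \ell x, \ell y\rangle$ and $V := A\cdot(A\cdot A) = \langle \ell x, \ell y\rangle$ (the analogous $3$- and $2$-dimensional subspaces also occur in $A'$), and recall $\Ann(A) = \langle s\rangle$. Since $W$, $V$ and $\langle s\rangle$ are defined purely in terms of the multiplication, every automorphism preserves them, and this is what lets us peel off the answer one layer at a time. Throughout I describe the groups functorially, on $R$-points for an arbitrary $k$-algebra $R$ as in the rest of the section; all maps constructed below are given by polynomial formulas, so they are genuine morphisms of algebraic groups. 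For brevity set $N_a := \ad_a|_V$ and $N_b := \ad_b|_V$, the nilpotent operators read off the multiplication table, and note $\ad_\ell|_V = \id$.

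For part (i), I would first restrict automorphisms to $V$, producing a homomorphism $r\colon \Aut(A')\to\GL(V)\cong\GL_2$. The crux is the kernel. If $\phi|_V=\id$, then the identities $\phi(a\cdot v)=\phi(a)\cdot\phi(v)$ for $v\in V$ pin down the $\langle a,b,\ell\rangle$-parts of $\phi(a),\phi(b)$, while the relations $\phi(a)\cdot\phi(\ell)=0=\phi(b)\cdot\phi(\ell)$ (reflecting $a\ell=b\ell=0$) cut the remaining $V$-parts down to $\alpha\,\ell x$ and $\beta\,\ell y$; the upshot is $\phi(a)=a+\alpha\,\ell x$, $\phi(b)=b+\beta\,\ell y$, $\phi(\ell)=\ell+\beta\,\ell x+\alpha\,\ell y$, so $\ker r\cong\G_a^2$. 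Next I would show $r$ is surjective by exhibiting an explicit section $\sigma\colon\GL_2\to\Aut(A')$. For $h\in\GL(V)$, the constraints $hN_ah^{-1},\,hN_bh^{-1}\in\Span\{N_a,N_b,\id\}$ are automatic, since that span is exactly the trace-zero matrices in characteristic $2$ and conjugation preserves the trace; these relations determine how $\sigma_h$ must act on $\langle a,b\rangle$, and a direct calculation identifies this action with the matrix $\det(h)^{-1}\,h^{(2)}$, where $h^{(2)}$ is the entrywise Frobenius square. Because entrywise squaring is multiplicative in characteristic $2$, the map $\sigma$ is a group homomorphism with $r\circ\sigma=\id$. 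Since any $\phi$ differs from $\sigma_{\phi|_V}$ by an element of $\ker r$, this yields $\Aut(A')=\ker r\rtimes\sigma(\GL_2)\cong\G_a^2\rtimes\GL_2$, with $\GL_2$ acting through the quadratic (Frobenius-twisted) representation $\det(h)^{-1}h^{(2)}$ — this is the meaning of the phrase \emph{quadratically}.

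For part (ii), I would instead use the quotient map: since $\langle s\rangle=\Ann(A)$ is characteristic, there is a homomorphism $q\colon\Aut(A)\to\Aut(A')$. Its kernel consists of the automorphisms inducing the identity on $A'$; writing such a map as $v\mapsto v+c(v)s$ shows that $c$ must vanish on $A\cdot A$, so the kernel is parametrized by $a\mapsto a+xs$, $b\mapsto b+ys$, $s\mapsto\mu s$, and computing its group law gives $\ker q\cong\G_a^2\rtimes\G_m$. Surjectivity of $q$ is clear from the zero-$s$ lift: because no product in $A$ has an $s$-component, sending an automorphism of $A'$ to its lift that fixes $s$ is a section homomorphism. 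It then remains to re-bracket the extension $1\to\G_a^2\rtimes\G_m\to\Aut(A)\to\Aut(A')\to1$ into the stated form. The scaling $\G_m$ commutes with the lifted copy of $\Aut(A')$, producing a subgroup $\Aut(A')\times\G_m$, while the $\G_a^2$ of $s$-translations of $a,b$ is normal in all of $\Aut(A)$; conjugations here all have the shape $\id+s\otimes(\varphi\circ g^{-1})$, and computing them shows that $\GL_2\le\Aut(A')$ acts on this normal $\G_a^2$ through (the dual of) the same quadratic representation while the translation subgroup $\G_a^2\le\Aut(A')$ acts trivially, giving $\Aut(A)\cong\G_a^2\rtimes(\Aut(A')\times\G_m)$.

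The main obstacle I anticipate is bookkeeping rather than any single hard idea: pinning down $\ker r$ exactly (checking that the mixed products $a\cdot\ell y$, $b\cdot\ell x$, $a\cdot\ell$, $b\cdot\ell$ leave precisely a two-parameter family), verifying that $\sigma$ is genuinely multiplicative via the $\det(h)^{-1}h^{(2)}$ formula, and — most delicately — correctly re-associating the iterated extension in (ii) and confirming the two action statements. A secondary point is that every identification must hold functorially, so that these are isomorphisms of algebraic groups and not merely of abstract groups; since all the maps are polynomial this is routine, but it is exactly the place where, if one preferred to avoid writing down explicit sections, one could instead argue via smoothness over the separable closure $k^s$ as in the proof of \cref{th:CQ-Miy}.
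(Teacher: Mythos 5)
Your proposal is correct and is essentially the paper's own argument: both rest on the invariant subspaces of \cref{rem:AAA} and the multiplication table in the basis $\B$, both identify the same normal $\G_a^2$ (respectively the same kernel $\G_a^2 \rtimes \G_m$ in (ii)) and the same quadratic representation $h \mapsto (\det h)^{-1}h^{(2)}$, and your ``zero-$s$ lift'' in (ii) is precisely the paper's extension of automorphisms of the subalgebra $A_1 \cong A'$ fixing $s$. The only differences are organizational --- the paper proves injectivity of the restriction $\Aut(A') \to \GL(\langle \ell, \ell x, \ell y\rangle)$ and identifies its image, whereas you restrict to $\langle \ell x, \ell y\rangle$ and compute a kernel and an explicit section --- together with your observation that $\Span\{N_a, N_b, \id\}$ is exactly the set of trace-zero matrices in characteristic $2$, a conceptual shortcut for the conjugation-invariance that the paper instead verifies by direct linear algebra.
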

\begin{proof}
We will continue to use the bases $\B$ for $A$ and $\B'$ for $A'$ given in \eqref{eq:basis} and~\eqref{eq:basis'}.
It will be convenient to use the multiplication table from page~\pageref{ta:A}.
\begin{enumerate}
    \item 
        Let $\theta \in \Aut(A')$ be arbitrary. 
        By \cref{rem:AAA}, its matrix representation with respect to $\B'$ must be of the form
        \[ \begin{pmatrix}
        	\alpha_1 	& \beta_1 		& 0				& 0			& 0  	\\
        	\alpha_2	& \beta_2 		& 0				& 0			& 0 	\\
        	\alpha_3  	& \beta_3		& \gamma_3 		& 0			& 0 	\\
        	\alpha_4	& \beta_4		& \gamma_4  	& \delta_4	& \eta_4 \\
        	\alpha_5	& \beta_5		& \gamma_5		& \delta_5 	& \eta_5
        \end{pmatrix} .\]
        Since this matrix is invertible, so is its submatrix $\begin{psmallmatrix} \delta_4 & \eta_4 \\ \delta_5 & \eta_5 \end{psmallmatrix}$.
        The equality $\theta(\ell)\cdot\theta(\ell x) = \theta(\ell x) \neq 0$ then implies  $\gamma_3 = 1$.
        From $\theta(a) \theta(\ell x) = 0$ and $\theta(a) \theta(\ell y) = \theta(\ell x)$, we get
        \[ \begin{pmatrix}
        	\delta_4 & \delta_5 \\
        	\eta_4 	 & \eta_5 
        \end{pmatrix} 
        \cdot 
        \begin{pmatrix}
        \alpha_3\\
        \alpha_1
        \end{pmatrix}
        =
        \begin{pmatrix}
        	0 \\
        	\delta_4
        \end{pmatrix}
        \quad \text{and} \quad
        \begin{pmatrix}
        	\delta_4 & \delta_5 \\
        	\eta_4 	 & \eta_5 
        \end{pmatrix} 
        \cdot 
        \begin{pmatrix}
        \alpha_2\\
        \alpha_3
        \end{pmatrix}
        =
        \begin{pmatrix}
        	0 \\
        	\delta_5
        \end{pmatrix} .
        \]
        In particular, $\alpha_1, \alpha_2, \alpha_3$ are uniquely determined by $\delta_4,\delta_5,\eta_4,\eta_5$,
        and a similar argument holds for $\beta_1, \beta_2, \beta_3$.
        Next, using $\theta(a) \theta(\ell) = 0$, we get
        \[ \alpha_4 = \alpha_1 \gamma_5 + \alpha_3 \gamma_4 \quad \text{and} \quad \alpha_5 = \alpha_2 \gamma_4 + \alpha_3 \gamma_5 , \]
        so $\alpha_4$ and $\alpha_5$ are uniquely determined by the $\gamma$'s, $\delta$'s and $\epsilon$'s. The same holds for $\beta_4$ and $\beta_5$, so we conclude that the restriction morphism 
        \[ \rho \colon \Aut(A') \to \Aut(\langle \ell,\ell x, \ell y \rangle) \leq \GL_3 \]
        is injective.
        Moreover, by checking all remaining relations $\theta(e_1) \cdot \theta(e_2) = \theta(e_1 \cdot e_2)$, for $e_1,e_2 \in \{a,b,\ell,\ell x,\ell y\}$, we see that $\im(\rho) \leq \GL_3$ consists of \emph{all} invertible matrices of the form
        \[ \begin{pmatrix}
        	1 			& 		0 	& 0 		\\
        	\gamma_4   	& \delta_4	& \eta_4 	\\
        	\gamma_5	& \delta_5	& \eta_5
        \end{pmatrix}. \]
        It is now straightforward to check that $\im(\rho) \cong \G_a^2 \rtimes \GL_2$.
    \item
        Notice from the multiplication table that $A_1 = \langle a,b,\ell,\ell x,\ell y \rangle$ is a subalgebra of $A$, isomorphic to the quotient $A' = A / \langle s \rangle$. Moreover, each automorphism of $A_1$ extends to a unique automorphism of $A$ fixing $s$, so $\Aut(A')$ is isomorphic to a subgroup of $\Aut(A)$.
        Keeping in mind that the subalgebra $\langle \ell, \ell x, \ell y \rangle$ is stabilized by each automorphism, it follows that an arbitrary $\theta \in \Aut(A)$ has matrix form
        \[
        \left(\begin{array}{ccccc|c}
            & & & & & 0 \\
            & & & & & 0 \\
            & & \Theta & & & 0 \\
            & & & & & 0 \\
            & & & & & 0 \\
            \hline
            \kappa & \lambda & 0 & 0 & 0 & \nu 
        \end{array}\right)
        =
        \left(\begin{array}{ccccc|c}
        	\alpha_1 	& \beta_1 		& 0				& 0			& 0 & 0  	\\
        	\alpha_2	& \beta_2 		& 0				& 0			& 0 & 0 	\\
        	\alpha_3  	& \beta_3		& 1 		& 0			& 0 & 0 	\\
        	\alpha_4	& \beta_4		& \gamma_4  	& \delta_4	& \eta_4 & 0 \\
        	\alpha_5	& \beta_5		& \gamma_5		& \delta_5 	& \eta_5 & 0 \\
            \hline
            \kappa & \lambda & 0 & 0 & 0 & \nu 
        \end{array}\right)
        \]
        for some $\Theta \in \Aut(A_1)$ and some $\kappa, \lambda \in R$ and $\nu \in R^\times$.
        Hence $\Aut(A) \cong \G_a^2 \rtimes (\Aut(A_1) \times \G_m)$, where the action can be determined by multiplying two such matrices.
        Clearly, the $\G_m$ acts on $\G_a^2$ by multiplication.
        To determine the action of $\Aut(A_1)$ on $\G_a^2$, we see that the action is given by right multiplication with the upper left $(2 \times 2)$-block
        $\begin{psmallmatrix} \alpha_1 & \beta_1 \\ \alpha_2 & \beta_2 \end{psmallmatrix}$.
        Using the equations from part (i), we see that this block can be rewritten as
        \[
        \begin{pmatrix} \alpha_1 & \beta_1 \\ \alpha_2 & \beta_2 \end{pmatrix}
        = r \begin{pmatrix} \delta_4^2 & \eta_4^2 \\ \delta_5^2 & \eta_5^2 \end{pmatrix} ,
        \]
        where $r$ is precisely the inverse of the determinant of the matrix 
        $\begin{psmallmatrix} \delta_4 & \eta_4 \\ \delta_5 & \eta_5 \end{psmallmatrix}$,
        so that this block 
        $\begin{psmallmatrix} \alpha_1 & \beta_1 \\ \alpha_2 & \beta_2 \end{psmallmatrix}$
        has determinant $1$.
        In other words, the action of $\GL_2$ on $\G_a^2$ is given by first applying the group homomorphism
        \[ \GL_2 \to \SL_2 \colon M = \begin{psmallmatrix} \alpha & \beta \\ \gamma & \delta \end{psmallmatrix} \mapsto (\det M)^{-1} \begin{psmallmatrix} \alpha^2 & \beta^2 \\ \gamma^2 & \delta^2 \end{psmallmatrix} \]
        and then applying right multiplication on $\G_a^2$.
    \qedhere
\end{enumerate}
\end{proof}

\begin{remark}
    It is natural to ask whether we can use the $\Z/2\Z$-gradings of the other examples to produce interesting Miyamoto groups. The usual definition of the Miyamoto maps as in \cref{def:miy} seems less useful (for instance, they are trivial if the base ring $R$ has no nilpotent elements). A possible solution is to consider the automorphism group $\Aut(A)$ as an algebraic group in characteristic $2$ and define Miyamoto maps as elements of a copy of the algebraic group $\mu_2$, which is not smooth in characteristic $2$. The groups generated by these Miyamoto maps is then an algebraic subgroup of $\Aut(A)$.
    We have tried to perform this computation for the symplectic Fischer space associated to $\Sym(5)$, but we have not been able to find a meaningful description of the resulting Miyamoto group.
\end{remark}

\end{document}